\newtheoremstyle{mythm}
{3pt}
{3pt}
{\itshape}
{}
{\bfseries}
{.}
{.5em}
{\thmnote{#1 }#3}
\newtheorem{thm}{Theorem}[section]
\newtheorem*{thmx}{Theorem} 
\newtheorem{lem}[thm]{Lemma}
\newtheorem{cor}[thm]{Corollary}
\newtheorem{pro}[thm]{Proposition}
\newtheorem*{qn*}{Question}
\newtheorem*{lem*}{Lemma}
\newtheorem{thm*}{Theorem}
\newtheorem{pro*}[thm*]{Proposition}
\newtheorem{cor*}[thm*]{Corollary}
\newtheorem*{thm**}{Theorem}
\theoremstyle{mythm}
\theoremstyle{definition}
\newtheorem{dfn}[thm]{Definition}
\theoremstyle{remark}
\newtheorem{rmk}[thm]{Remark}
\newtheorem*{claim*}{Claim}
\newtheorem*{fact*}{Fact}
\newenvironment{claim}[1]{%
    \claiminner
}{\endclaiminner}
\newcommand{\ep}{
    \epsilon
}
\newcommand{\mc}[1]{
    \mathcal{#1}
}
\newcommand{\mb}[1]{
    \mathbb{#1}
}
\title[Entropy, volume, and diameter of representations in ${\rm SO}(p,q+1)$]{Volume, entropy, and diameter in ${\rm SO}(p,q+1)$-higher Teichm\"uller spaces}
\author{Filippo Mazzoli}
\address{Max Planck Institute for Mathematics in the Sciences, Leipzig, Germany}
\email{filippomazzoli@me.com}
\author{Gabriele Viaggi}
\address{Department of Mathematics, Sapienza University of Rome, Rome, Italy}
\email{gabriele.viaggi@uniroma1.it}
\begin{document}
    
\begin{abstract}
We investigate properties of the pseudo-Riemannian volume, entropy, and diameter for convex cocompact representations $\rho : \Gamma \to \mathrm{SO}(p,q+1)$ of closed $p$-manifold groups. In particular: We provide a uniform lower bound of the product entropy times volume that depends only on the geometry of the abstract group $\Gamma$. We prove that the entropy is bounded from above by $p-1$ with equality if and only if $\rho$ is conjugate to a representation inside ${\rm S}({\rm O}(p,1)\times{\rm O}(q))$, which answers affirmatively to a question of Glorieux and Monclair. Lastly, we prove finiteness and compactness results for groups admitting convex cocompact representations with bounded diameter.
\end{abstract}

\maketitle

\section{Introduction}

A very rich interaction between topology, geometry, and dynamics occurs in the study of surfaces. A prototypical example is provided by hyperbolic structures and their moduli spaces, where the following classical properties hold:

\begin{enumerate}[(a)]
	\item \emph{Volume:} The Gau\ss-Bonnet formula relates the area of a hyperbolic surface and its Euler characteristic, a topological invariant.
	\item \emph{Entropy:} The topological entropy of the geodesic flow on a closed hyperbolic surface is always equal to $1$, independently of the hyperbolic structure.
	\item \emph{Compactness:} The moduli space is not compact and, by Mumford compactness criterion, the diameter defines a proper function on it.
\end{enumerate}

In this paper we intend to investigate how these phenomena generalize to the study of so-called \emph{higher Teichm\"uller spaces} associated with special orthogonal subgroups $\mathrm{SO}(p,q+1)$, as introduced by Danciger, Gu\'eritaud, and Kassel \cite{DGK}, Beyrer and Kassel \cite{BK}. Examples to keep in mind are:
\begin{itemize}
\item{The Teichm\"uller space of {\em maximal representations} of surface groups into ${\rm SO}(2,q+1)$ introduced by Burger, Iozzi, and Wienhard \cite{BIW}. See also Collier, Tholozan, and Toulisse \cite{CTT} and \cite{MV2} for a description of the pseudo-Riemannian geometry of these representations.}
\item{The Teichm\"uller space of continuous deformations of the composition of the inclusion of a uniform lattice $\Gamma<{\rm SO}(p,1)$ in ${\rm SO}(p,1)$ with the upper block embedding ${\rm SO}(p,1)\to{\rm SO}(p,q+1)$. See Barbot \cite{B15} or \cite{BK} for the case $q=1$.}
\end{itemize}

These spaces, in particular, parametrize objects of the form 
\[
\mathsf{N}_\rho : = \Omega_{\rho}/\rho(\Gamma) ,
\]
where: $\rho : \Gamma \to \mathrm{SO}(p,q+1)$ is a faithful representation with discrete image, $\Gamma$ is a fixed hyperbolic group of cohomological dimension $p$, and $\Omega$ is a maximal $\Gamma$-invariant open convex subset of the {\em pseudo-Riemannian hyperbolic space} $\mathbb{H}^{p,q}$. Any such representation is called $\mathbb{H}^{p,q}$-{\em convex cocompact}, in the sense of \cite{DGK}. One can think of $\mathsf{N}_\rho$ as generalizing the role of a hyperbolic surface from above, and of the space of conjugacy classes
\[
\mathcal{CC}(\Gamma) : =  \{ [\rho] \mid \text{$\rho : \Gamma \to \mathrm{SO}(p,q+1)$ is convex cocompact}\}/{\rm Out}(\Gamma) ,
\]
where ${\rm Out}(\Gamma)$ denotes the group of outer automorphisms of $\Gamma$, as generalizing the role of the Riemann moduli space.

Any such $\mathsf{N}_\rho$ has natural pseudo-Riemannian {\em entropy} $\delta_\rho$, {\em volume} ${\rm vol}_\rho$, and {\em diameter} ${\rm diam}_\rho$. We establish three properties about these invariants: 
\begin{enumerate}[(a)]
	\item{{\em Volume, entropy, and topology:} We give uniform lower bounds on the product $\delta_\rho^p \cdot \mathrm{vol}_\rho$ only depending on the geometry of the group $\Gamma$, see Theorem~\ref{thm:main2}.}
	\item{{\em Entropy rigidity:} The entropy is bounded from above by $p-1$ with equality if and only if $\rho(\Gamma)$ is conjugate into ${\rm S}({\rm O}(p,1)\times{\rm O}(q))$, see Theorem~\ref{thm:main1}. This answers a question of Glorieux and Monclair \cite{GM18}*{Question~1.6}.}
	\item{{\em Diameter and complexity:} We prove finiteness and compactness results for groups $\Gamma$ admitting convex cocompact representations with bounded diameter ${\rm diam}_\rho\le D$, see Theorems~\ref{thm:main4}, \ref{thm:main5}, and \ref{thm:main6}. }
\end{enumerate}

In the rest of the introduction, we state and comment each of these results, but first we review some basic facts about $\mb{H}^{p,q}$ and ${\rm SO}(p,q+1)$-{\em higher Teichmüller spaces}.

\subsection*{\texorpdfstring{${\rm SO}(p,q+1)$}{SO(p,q+1)}-Convex cocompactness}
Let us briefly recall the setup: ${\rm SO}(p,q+1)$ is the group of orientation-preserving linear isometries of $\mb{R}^{p,q+1}$, which denotes the vector space $\mb{R}^{p+q+1}$ endowed with the quadratic form of signature $(p,q+1)$
\[
\langle\bullet,\bullet\rangle_{p,q+1}=x_1^2+\cdots+x_p^2-y_1^2-\cdots-y_{q+1}^2.
\]

The group ${\rm SO}(p,q+1)$ acts on the pseudo-Riemannian homogeneous space
\[
\mb{H}^{p,q}=\{[x]\in\mb{P}(\mb{R}^{p,q+1})|\;\langle x,x\rangle_{p,q+1}<0\} 
\]
by isometries with respect to the natural pseudo-Riemannian metric of signature $(p,q)$ induced by $\langle\bullet,\bullet\rangle_{p.q+1}$, and on its \emph{ideal boundary}
\[
\partial \mathbb{H}^{p,q} = \{[z] \in \mathbb{P}(\mathbb{R}^{p,q+1}) \mid \langle z, z \rangle_{p,q+1} = 0 \} 
\]
by preserving its associated conformal structure of signature $(p-1,q)$.

Geodesics in $\mathbb{H}^{p,q}$ are all of the form $\mathbb{P}(V) \cap \mathbb{H}^{p,q}$, where $V$ is some $2$-dimensional subspace of $\mathbb{R}^{p+q+1}$ whose associated projective line $L : =\mathbb{P}(V)$ intersects $\mathbb{H}^{p,q}$. Any geodesic is of one of the following kinds, depending on its induced metric: $L \cap \mathbb{H}^{p,q}$ is either \emph{spacelike}, \emph{timelike}, or \emph{lightlike} depending on whether the restriction of the pseudo-Riemannian metric $g_{\mathbb{H}^{p,q}}$ is positive, negative, or degenerate, respectively. Moreover, we say that a subset $C \subset \mathbb{H}^{p,q}$ is \emph{convex} if it is contained inside an affine chart of $\mathbb{P}(\mathbb{R}^{p+q+1})$ in which it is convex as a subset of the affine space. 

The action of ${\rm SO}(p,q+1)$ preserves also a {\em pseudo-distance} defined as follows: Consider a pair of distinct points $[x],[y]\in\mb{H}^{p,q}$ and let $V:={\rm Span}\{x,y\}$ be the 2-dimensional subspace they generate. We say that $[x], [y] \in \mathbb{H}^{p,q}$ are \emph{space-related} if $V$ has signature $(1,1)$ (or, equivalently, if the line $\mathbb{P}(V) \cap \mathbb{H}^{p,q}$ is spacelike). Then we set
\[
d_{\mb{H}^{p,q}}([x],[y]):=
\left\{
\begin{array}{ll}
	{\rm arccosh}\left(-\frac{\langle x,y\rangle}{\sqrt{\langle x,x\rangle\langle y,y\rangle}}\right) &\text{if $[x], [y]$ are space-related},\\
	0 &\text{otherwise},\\
\end{array}
\right.
\]
where the representatives $x, y \in \mathbb{R}^{p,q+1}$ are chosen so that $\langle x, y \rangle < 0$. We remark that $d_{\mathbb{H}^{p,q}}([x],[y])$ coincides with the length of the unique spacelike geodesic segment in $\mathbb{H}^{p,q}$ joining $[x], [y] \in \mathbb{H}^{p,q}$.

Following Danciger, Gu\'eritaud, and Kassel \cite{DGK}, a representation $\rho : \Gamma \to \mathrm{SO}(p,q+1)$ is said to be $\mathbb{H}^{p,q}$-{\em convex cocompact} (or simply \emph{convex cocompact} throughout our exposition) if it has finite kernel and if there exists a $\rho$-invariant convex subset $C$ of $\mathbb{H}^{p,q}$ that satisfies:
\begin{itemize}
	\item The group $\Gamma$ acts cocompactly on $\overline{C}\cap \mathbb{H}^{p,q}$, where $\overline{C}$ denotes the closure of $C$ inside $\mathbb{P}(\mathbb{R}^{p,q+1})$.
	\item The ideal boundary $\partial C : = \overline{C} \cap \partial \mathbb{H}^{p,q}$ does not contain any open lightlike segments.
\end{itemize} 

Furthermore, there exists a unique $\rho$-invariant \emph{maximal open, convex domain of discontinuity} $\Omega_\rho \subset \mathbb{H}^{p,q}$ that contains $C$. More precisely, $\Omega_\rho$ is the connected component of $\mathbb{H}^{p,q} - \bigcup_{z} \mathbb{P}(\langle z \rangle^\perp)$ containing $C$, as $z$ varies in $\partial C$.

By \cite{DGK}*{Theorem~1.24}, any group $\Gamma$ that admits a convex cocompact representation is word-hyperbolic, and the natural inclusion $\Gamma \subset \mathrm{SO}(p,q+1)$ is $P_1$-Anosov. Recall also from Danciger, Gu{\'e}ritaud, and Kassel \cite{DGK} that convex cocompactness is an open condition inside $\mathrm{Hom}(\Gamma,\mathrm{SO}(p,q+1))$.

\subsection*{\texorpdfstring{${\rm SO}(p,q+1)$}{SO(p,q+1)}-Higher Teichmüller spaces}

The recent works of Seppi, Smith, and Toulisse \cite{SST23} and of Beyrer and Kassel \cite{BK} have shed new light on the geometry of convex cocompact representations in $\mathrm{SO}(p,q+1)$. By \cite{SST23}, for any group $\Gamma$ of cohomological dimension $p$ that admits a convex cocompact representation $\rho : \Gamma \to \mathrm{SO}(p,q+1)$, there exists a unique $\rho$-invariant spacelike, complete $p$-submanifold $M \subset \mathbb{H}^{p,q}$ that is \emph{maximal}, namely whose trace of the second fundamental form is equal to zero. This extends previous works of Andersson, Barbot, B\'eguin, and Zeghib \cite{ABBZ}, Bonsante and Schlenker \cite{BS10}, Collier, Tholozan, and Toulisse \cite{CTT}, and Labourie, Toulisse, and Wolf \cite{LTW} and includes them in a unified framework. Furthermore, under the same hypotheses, any continuous deformation of $\rho:\Gamma\to{\rm SO}(p,q+1)$ within ${\rm Hom}(\Gamma,{\rm SO}(p,q+1))$ is again convex cocompact by \cite{BK}. It follows that the space of conjugacy classes of convex cocompact representations of $\Gamma$ is a union of connected components and, hence, a higher-rank Teichm\"uller space.

The main aim of the present paper is to exploit some of the key features of maximal spacelike submanifolds of $\mathbb{H}^{p,q}$ to investigate the global structure of $\mathrm{SO}(p,q+1)$-higher Teichm\"uller spaces. In particular, our work leverages on the following properties.

\begin{thmx}[{Ishihara~\cite{I}*{Proposition~2.1, Theorem~1.2}}]
	Let $M$ be a complete, spacelike, maximal $p$-submanifold of $\mathbb{H}^{p,q}$. Then its Ricci curvature and second fundamental form $\mathbb{I}$ satisfy
	\begin{equation*}
		\mathrm{Ric}_M + (p - 1) \, g_M \geq 0 , \qquad \norm{\mathbb{I}}_\infty \leq pq , \tag{I} \label{eq:ricci bound}
	\end{equation*}
	where $g_M$ denotes the induced metric of $M$.
\end{thmx}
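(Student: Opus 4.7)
The plan is to prove the two inequalities separately: the Ricci estimate follows from tracing the Gauss equation, while the pointwise bound on $\mathbb{I}$ requires a Simons-type Bochner computation combined with the Omori-Yau maximum principle. The key geometric inputs are that the ambient space $\mathbb{H}^{p,q}$ has constant sectional curvature $-1$, that the normal bundle of a spacelike submanifold is negative definite, and that maximality means the mean curvature vector $H$ vanishes.

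\emph{Step 1: the Ricci bound.}
I would start from the Gauss equation for a spacelike $p$-submanifold $M$ in the constant sectional-curvature space $\mathbb{H}^{p,q}$. Tracing over an orthonormal frame $\{e_i\}$ of $T_x M$, one finds, for any unit vector $X \in T_x M$,
\[
\mathrm{Ric}_M(X,X) = -(p-1) + \langle H, \mathbb{I}(X,X)\rangle - \sum_i \langle \mathbb{I}(e_i, X), \mathbb{I}(e_i, X)\rangle .
\]
The term $-(p-1)$ comes from the ambient curvature contribution on the $p-1$ spacelike planes orthogonal to $X$. Maximality eliminates the $\langle H, \mathbb{I}(X,X)\rangle$ term, while the spacelike assumption forces the normal bundle to be negative definite, so each inner product $\langle \mathbb{I}(e_i, X), \mathbb{I}(e_i, X)\rangle$ is $\leq 0$ and the sum on the right is non-negative. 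This gives exactly $\mathrm{Ric}_M + (p-1)\, g_M \geq 0$.

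\emph{Step 2: the bound $\|\mathbb{I}\|_\infty \leq pq$.}
Set $u := \|\mathbb{I}\|^2$, where the norm uses $g_M$ on tangent slots and the opposite of the ambient metric on normal slots, so that $u \geq 0$. Standard computations yield a Simons-type Bochner identity which, after inserting the constant-curvature ambient Riemann tensor and using $H = 0$, takes the form
\[
\tfrac{1}{2}\Delta_M u \geq \|\nabla \mathbb{I}\|^2 + \mathcal{Q}(\mathbb{I}),
\]
where the quartic term $\mathcal{Q}(\mathbb{I})$ in $\mathbb{I}$ can be bounded below by a function of $u$ that is positive whenever $u$ exceeds the critical value determined by the ambient signature. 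By Step 1, $\mathrm{Ric}_M \geq -(p-1)\, g_M$, and $M$ is complete, so the Omori-Yau maximum principle applies: there exist points $x_k \in M$ with $u(x_k) \to \sup_M u$ and $\limsup_k \Delta_M u(x_k) \leq 0$. Evaluating the Bochner inequality along $\{x_k\}$ and using $\|\nabla \mathbb{I}\|^2 \geq 0$ forces $\sup_M u \leq pq$.

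\emph{Main obstacle.}
The delicate step is the Simons-type identity of Step 2: one must track carefully the signs arising from the timelike normal bundle and the pseudo-Riemannian Ricci identities, and verify that the interaction between maximality and the constant ambient curvature isolates a quartic term with the correct sign and sharp constant $pq$. Step 1 feeds directly into Step 2 because the Ricci lower bound is precisely what licenses the use of Omori-Yau on the complete manifold $M$, so the two assertions are logically interlocked: the easy Gauss-equation step unlocks the analytic machinery needed for the harder curvature bound.
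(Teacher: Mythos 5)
Your Step 1 is essentially the paper's own derivation of the Ricci lower bound (equation \eqref{eq:traccia gauss} and the two sentences that follow it): trace the Gau{\ss} equation, drop the mean-curvature term by maximality, and observe that since $(TM)^\perp$ is negative definite the remaining correction term is pointwise nonnegative. This part is correct and matches the paper.

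For $\norm{\mathbb{I}}_\infty \leq pq$, the paper gives no proof at all --- it simply imports the statement from Ishihara \cite{I}. Your Step 2 reconstructs Ishihara's own route: a Simons-type formula for $u = \norm{\mathbb{I}}^2$ combined with the Omori--Yau maximum principle, whose hypotheses (completeness, Ricci bounded below) you have just verified in Step 1. The strategy is the right one, and you are correct that the two estimates are logically interlocked in exactly this way. However, the step you yourself flag as delicate is left entirely schematic, and this is a genuine gap: the inequality $\tfrac12\Delta_M u \geq \norm{\nabla\mathbb{I}}^2 + \mathcal{Q}(\mathbb{I})$ as written carries no $p$- or $q$-dependence and therefore cannot by itself force the threshold $pq$. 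To close the gap one must actually carry out the Simons computation: substituting the constant-curvature ambient Riemann tensor from \eqref{eq:riemann hpq} and $H = 0$ produces an explicit linear term in $u$ whose coefficient involves $p$, and the remaining quartic expression in $\mathbb{I}$ must then be estimated by an algebraic inequality on the $q$ shape operators (this is where the codimension $q$ and the timelike signature of the normal bundle enter, and where one must check that the signs do not flip against you). Only after these terms are displayed does the maximum principle yield the constant $pq$. As it stands the proposal correctly identifies the approach Ishihara uses, but does not actually prove the second inequality.
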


\begin{thm*}\label{thm:distance comparison}
	Let $M$ be a complete, spacelike, maximal $p$-submanifold of $\mathbb{H}^{p,q}$. Then the distance $d_M$ associated with the induced metric of $M$ and the restriction to $M$ of the pseudo-metric $d_{\mathbb{H}^{p,q}}$ satisfy 
	\begin{equation*}
		d_M(x,y) \leq d_{\mathbb{H}^{p,q}}(x,y) \leq \sqrt{p} \, d_M(x,y) , \qquad x, y \in M .  \tag{II} \label{eq:distance comparison}
	\end{equation*}
\end{thm*}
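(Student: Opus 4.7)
The strategy is to analyze the function $F : M \to \mathbb{R}_{\geq 1}$ defined by $F(y) := \cosh d_{\mathbb{H}^{p,q}}(x, y) = -\langle \tilde x, \tilde y\rangle_{p,q+1}$, where $\tilde x, \tilde y$ are lifts of $x, y$ to the hyperboloid $\widehat{\mathbb{H}}^{p,q} := \{v \in \mathbb{R}^{p,q+1} : \langle v, v\rangle_{p,q+1} = -1\}$. A direct Gauss-equation computation yields the key identity
\[ \operatorname{Hess}^M F(X, Y) = F \cdot g_M(X, Y) - \langle \tilde x_N, \mathbb{I}(X, Y)\rangle, \]
where $\tilde x_N \in N_y M$ is the normal component of $\tilde x$ in the orthogonal decomposition $\tilde x = F(y)\, \tilde y + \tilde x_T + \tilde x_N$. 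Tracing with $g_M$ and using the maximality of $M$ (i.e.~$\operatorname{tr}_{g_M} \mathbb{I} = 0$) gives the trace identity $\Delta^M F = p F$.

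For the left inequality, the same decomposition combined with $\langle \tilde x, \tilde x\rangle = -1$ yields $|\nabla^M F|^2 = |\tilde x_T|^2 = F^2 - 1 + |\tilde x_N|_N^2 \geq F^2 - 1$, where $|v|_N^2 := -\langle v, v\rangle \geq 0$ on the timelike normal bundle. Equivalently, $|\nabla^M \rho| \geq 1$ on $M \setminus \{x\}$ for $\rho := d_{\mathbb{H}^{p,q}}(x, \cdot)$. Flowing backward along $-\nabla^M \rho / |\nabla^M \rho|$ from $y$ produces a curve in $M$ reaching $x$ in $M$-arclength at most $\rho(y)$, giving $d_M(x, y) \leq d_{\mathbb{H}^{p,q}}(x, y)$.

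For the right inequality, take an arclength-parametrized $M$-geodesic $\sigma : [0, L] \to M$ with $\sigma(0) = x$, $\sigma(L) = y$ (so $L = d_M(x, y)$), and set $\phi := F \circ \sigma$. Then $\phi(0) = 1$, $\phi'(0) = 0$, and
\[ \phi''(t) = \operatorname{Hess}^M F(\sigma'(t), \sigma'(t)) = \phi(t) - \langle \tilde x_N, \mathbb{I}(\sigma'(t), \sigma'(t))\rangle . \]
The plan is to establish the pointwise bound $\phi''(t) \leq p\, \phi(t)$; together with the initial conditions, a Sturm comparison against $\eta'' = p\eta$, $\eta(0) = 1$, $\eta'(0) = 0$ (solution $\eta(t) = \cosh(\sqrt p\, t)$) then yields $\phi(L) \leq \cosh(\sqrt p\, L)$, which unpacks to $d_{\mathbb{H}^{p,q}}(x, y) \leq \sqrt p\, d_M(x, y)$.

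The main obstacle is the pointwise bound $\phi'' \leq p \phi$: the trace relation $\Delta^M F = pF$ only controls the Hessian on average over directions. Converting it into a pointwise estimate amounts to showing that every eigenvalue of $\operatorname{Hess}^M F$ lies in $[0, pF]$; since the eigenvalues sum to $pF$, the real point is positive semidefiniteness. One estimates the correction term $\langle \tilde x_N, \mathbb{I}(X, X)\rangle$ via Cauchy-Schwarz in the negative-definite normal bundle, invokes Ishihara's pointwise bound (I) on $\|\mathbb{I}\|_\infty$, and controls $|\tilde x_N|_N$ in terms of $F$ using the structural constraints coming from the orthogonal decomposition and the geometry of maximal spacelike submanifolds—this last step being the technical heart of the argument.
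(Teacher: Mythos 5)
Your setup is correct: the identity $\operatorname{Hess}^M F(X,Y) = F\,g_M(X,Y) - \langle \tilde x_N, \mathbb{I}(X,Y)\rangle$, the trace identity $\Delta^M F = pF$ from maximality, and the relation $\norm{\nabla^M F}^2 = F^2 - 1 + |\tilde x_N|_N^2$ all check out. Your gradient-flow argument for the left inequality is valid and is in fact a genuinely different route from the paper: the paper proves $d_M \le d_{\mathbb{H}^{p,q}}$ by working in Fermi coordinates, projecting $M$ onto $\mathbb{D}^p$, and comparing path lengths (Lemma~2.2), whereas you integrate the gradient flow of $\beta$, exploiting that $\norm{\nabla^M \beta} \ge 1$. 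Both are correct; the flow argument uses completeness of $M$ in a slightly different way, but it is clean.

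The right inequality, however, has a genuine gap. You want the pointwise bound $\operatorname{Hess}^M F \le pF\,g_M$, which given $\Delta^M F = pF$ you correctly reduce to positive semidefiniteness of $\operatorname{Hess}^M F$. But this is neither proven nor plausible with the tools you sketch. Concretely, you would need $\langle \tilde x_N, \mathbb{I}(X,X)\rangle \le F|X|^2$; estimating $\langle \tilde x_N, \mathbb{I}(X,X)\rangle \le |\tilde x_N|_N\,|\mathbb{I}(X,X)|_N$ by Cauchy--Schwarz in the negative-definite normal bundle, then using $|\tilde x_N|_N^2 \le (p-1)(F^2-1)$ and Ishihara's bound $\norm{\mathbb{I}}_\infty \le pq$, yields only $\langle \tilde x_N, \mathbb{I}(X,X)\rangle \le \sqrt{p-1}\,F\,pq\,|X|^2$, which exceeds $F|X|^2$ as soon as $p,q \ge 2$. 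So even granting the bound on $|\tilde x_N|_N$, Cauchy--Schwarz plus Ishihara is quantitatively far from enough, and positive semidefiniteness of $\operatorname{Hess}^M F$ is not expected to hold for general maximal $M$. Moreover, the bound on $|\tilde x_N|_N$ that you label the ``technical heart'' is precisely Theorem~\ref{thm:restatement_thm:main3} of the paper, which is a reformulation of the gradient bound $\norm{\nabla\beta}_{M,\infty} \le \sqrt p$ --- so your plan quietly presupposes the very estimate you are trying to establish. The paper's route is strictly weaker and is the one that actually works: it proves only the gradient bound $\norm{\nabla\beta}_{M,\infty} \le \sqrt p$, by locating (via the compactness criterion of Proposition~\ref{prop:compactness maximal}) a point $(x,u,z)$ maximizing $\langle\nabla\beta_z,u\rangle$ over the pointed moduli space of maximal submanifolds, writing down the first- and second-order conditions there, and feeding them through the Gauss and Codazzi equations (Lemmas~\ref{lem:critical point} and~\ref{lem:maximum principle}). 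The distance comparison then follows by integrating $\dd\beta(\dot\gamma) \le \sqrt p\,\norm{\dot\gamma}$ along an $M$-geodesic --- no pointwise Hessian bound is needed or proved.
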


Theorem~\ref{thm:distance comparison} is a new contribution that generalizes Labourie and Toulisse \cite{LT22}*{Theorem~6.5} and it will be deduced from Theorem~\ref{thm:main3} below. (Notice that Theorem~\ref{thm:distance comparison} does not require $M$ to be invariant by the action of some convex cocompact representation.) 

More precisely, we apply properties \eqref{eq:ricci bound} and \eqref{eq:distance comparison} of maximal submanifolds to study volume, entropy, and diameter of convex cocompact representations. Given any such $\rho : \Gamma \to \mathrm{SO}(p,q+1)$ with associated pseudo-hyperbolic manifold $\mathsf{N}_\rho : = \Omega_\rho / \rho(\Gamma)$, we consider:

\begin{enumerate}[(a)]
	\item The \emph{volume} of $\mathsf{N}_\rho$
	\[
	\mathrm{vol}_\rho : = \mathrm{vol}\left(\mathsf{N}_\rho\right) = \int_{\mathsf{N}_\rho} \abs{\det(g_{\mathsf{N}_\rho}){}_{i,j}}^{1/2} \wedge_i \dd x_i ,
	\]
	where the integrand equals the volume form induced by the pseudo-Riemannian metric of $N_{\rho}$ (in local coordinates $(x_i)_i$).
	\item The \emph{entropy} (or \emph{pseudo-Riemannian critical exponent}) of $\rho$
	\[
	\delta_\rho : = \limsup_{R \to \infty} \frac{1}{R} \, \log\abs{\{\gamma \in \Gamma \mid d_{\mathbb{H}^{p,q}}(o, \rho(\gamma) o) \leq R \}} ,
	\]
	for some (any) choice of a basepoint $o \in \Omega_\rho$. 
	\item The \emph{diameter} of $\mathsf{N}_\rho$
	\[
	\mathrm{diam}_\rho : = \sup \left\{ \left. \inf_{\gamma \in \Gamma} d_{\mathbb{H}^{p,q}}(x, \rho(\gamma)y) \, \right| \, \text{$x, y \in \Omega_\rho$ are space-related} \right\} .
	\]
\end{enumerate}

Let us provide a brief motivation for our interest in the study of these quantities. The volume $\mathrm{vol}_\rho$ has been extensively studied when $q = 0$ and $\rho$ is cocompact (in which case $\mathsf{N}_\rho$ is a closed hyperbolic $p$-manifold and $\mathrm{vol}_\rho$ is its Riemannian volume), and when $q = 1$. For the latter case, recall that $\rho : \Gamma \to \mathrm{SO}(p,2)$ is $\mathbb{H}^{p,1}$-convex cocompact if and only if $\mathsf{N}_\rho$ is a \emph{maximal globally hyperbolic, Cauchy-compact} (or, more concisely, \emph{MGHC}), \emph{anti-de Sitter spacetime} of dimension $p + 1$, by the work of Barbot and M\'erigot \cite{BM} when $\Gamma$ is the fundamental group of a closed hyperbolic $p$-manifold, and by \cite{DGK} for a general word hyperbolic group. In this context, $\mathrm{vol}_\rho$ is simply the Lorentzian volume of $\mathsf{N}_\rho$. For $p = 2$ and $q = 1$, we refer to the work of Bonsante, Seppi, and Tamburelli \cite{BST}, who observed multiple interesting relations between $\mathrm{vol}_\rho$ and classical hyperbolic geometry in dimension $2$.

The critical exponent $\delta_\rho$ was first introduced and studied by Glorieux and Monclair \cite{GM18}. In their work, the authors showed that $\delta_\rho$ coincides both with the dimension of suitable (natural adaptation of the notion of) Patterson-Sullivan densities, and with an appropriate pseudo-Riemannian analog of Hausdorff dimension of the limit set of $\rho$. See also Collier, Tholozan, Toulisse \cite{CTT} and Theorem~\ref{thm:main1} below for further comments and applications. 

We introduce here the diameter $\mathrm{diam}_\rho$ as a natural geometric invariant associated with the pseudo-Riemannian manifold $\mathsf{N}_\rho$. On the one hand, its definition does not directly involve the Riemannian geometry of the maximal submanifold $\mathsf{M}_\rho = M/\rho(\Gamma)$, but rather the dynamics of the action by $\rho$ on the homogeneous space $\mathbb{H}^{p,q}$, in analogy with the critical exponent $\delta_\rho$. On the other hand, both $\mathrm{vol}_\rho$ and $\mathrm{diam}_\rho$ are related to classical Riemannian invariants of the maximal submanifold $\mathsf{M}_\rho$, as described by the following statement.

\begin{pro*}
\label{pro:finite vol diam}
	Let $\rho : \Gamma \to \mathrm{SO}(p,q+1)$ be a convex cocompact representation. Then the following properties hold:
	\begin{enumerate}
		\item There exists a dimensional constant $\mu = \mu(p,q) > 0$ such that
		\[
		0 < \mu \, \mathrm{vol}(\mathsf{M}_\rho) \leq \mathrm{vol}_\rho < \infty .
		\] 
		\item The volume entropy $h(\mathsf{M}_\rho)$ of $\mathsf{M}_\rho$ satisfies
		\[
		h(\mathsf{M}_\rho) \leq \delta_\rho \leq \frac{1}{\sqrt{p}} \, h(\mathsf{M}_\rho) .
		\]
	\end{enumerate}
\end{pro*}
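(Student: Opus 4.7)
\emph{Part (2): entropy.} Since $\rho(\Gamma)$ acts isometrically and cocompactly on $(M, g_M)$ with compact quotient $\mathsf{M}_\rho$, a standard ball-counting argument identifies the volume entropy with the orbital growth rate:
\[
h(\mathsf{M}_\rho) = \limsup_{R \to \infty} \frac{1}{R} \log \abs{\{\gamma \in \Gamma : d_M(o, \rho(\gamma) o) \leq R\}} ,
\]
for any fixed basepoint $o \in M$. Applying the comparison (\ref{eq:distance comparison}) to the space-related pairs $(o, \rho(\gamma) o) \in M \times M$ produces the two-sided chain of inclusions
\[
\{\gamma : d_{\mathbb{H}^{p,q}}(o, \rho(\gamma) o) \leq R\} \subseteq \{\gamma : d_M(o, \rho(\gamma) o) \leq R\} \subseteq \{\gamma : d_{\mathbb{H}^{p,q}}(o, \rho(\gamma) o) \leq \sqrt{p}\, R\} ,
\]
and taking cardinalities followed by $\limsup \frac{1}{R}\log(\cdot)$ on all three sides translates directly into the desired comparison between $\delta_\rho$ and $h(\mathsf{M}_\rho)$.

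\emph{Part (1): lower volume bound.} The plan is to exhibit a uniform embedded normal tubular neighborhood of $M$ inside $\Omega_\rho$. The normal bundle $NM \to M$ has rank $q$ with negative-definite fiber metric (since $M$ is spacelike). Combining the Ishihara estimate $\norm{\mathbb{I}}_\infty \leq pq$ from (\ref{eq:ricci bound}) with the constant sectional curvatures of $\mathbb{H}^{p,q}$, the Jacobi equation along timelike normal geodesics yields a uniform radius $\epsilon_0 = \epsilon_0(p,q) > 0$ on which the normal exponential map $\exp^\perp : N_{\epsilon_0} M \to \mathbb{H}^{p,q}$ is an embedding with pseudo-Riemannian volume Jacobian bounded below by a positive constant $c = c(p, q)$. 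By $\rho(\Gamma)$-equivariance and openness of $\Omega_\rho$, after possibly shrinking $\epsilon_0$, the image of $N_{\epsilon_0} M$ lies inside $\Omega_\rho$. Quotienting by $\rho(\Gamma)$ and integrating the pseudo-Riemannian volume form over this tube gives $\mathrm{vol}_\rho \geq \mu(p,q) \cdot \mathrm{vol}(\mathsf{M}_\rho) > 0$.

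\emph{Finiteness and main obstacle.} For $\mathrm{vol}_\rho < \infty$, the plan is to extend the parametrization by $\exp^\perp$ up to the first focal locus along normal directions. The same Jacobi estimate, now exploiting that the mixed (spacelike--timelike) sectional curvatures of $\mathbb{H}^{p,q}$ equal $-1$, forces normal focal points at uniformly bounded distance, so that the maximal injectivity set $\mathcal{U} \subset NM$ of $\exp^\perp$ is a $\rho(\Gamma)$-invariant open subset whose fibers $\mathcal{U}_x$ have uniformly bounded Euclidean volume. The main obstacle is identifying $\exp^\perp(\mathcal{U})$ with $\Omega_\rho$ up to a measure-zero set---that is, showing that $\Omega_\rho$ is exactly the (maximal) normal tube over $M$ cut off by the focal locus. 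This requires careful use of the convex and dual-cone description of $\Omega_\rho$, combined with the structural results on maximal spacelike submanifolds of \cite{SST23}. Once this identification is in place, the change-of-variables formula
\[
\mathrm{vol}_\rho \;=\; \int_{\mathsf{M}_\rho} \int_{\mathcal{U}_x} \abs{J(x,v)} \, dv \, d\mathrm{vol}_M(x) ,
\]
together with the uniform boundedness of $\abs{J}$ on $\mathcal{U}$ and of the fibers $\mathcal{U}_x$, yields the finite upper bound.
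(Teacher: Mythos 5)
Your proof of item (2) is on the right track but does not land where you claim. The chain of inclusions you write is correct, but taking cardinalities and $\limsup\frac{1}{R}\log(\cdot)$ yields
\[
\delta_\rho \;\leq\; h(\mathsf{M}_\rho) \;\leq\; \sqrt{p}\,\delta_\rho ,
\]
equivalently $\tfrac{1}{\sqrt{p}}\,h(\mathsf{M}_\rho) \leq \delta_\rho \leq h(\mathsf{M}_\rho)$. (Indeed $d_M \leq d_{\mathbb{H}^{p,q}}$ makes the $d_{\mathbb{H}^{p,q}}$-counting function smaller, so $\delta_\rho \leq h$; and $d_{\mathbb{H}^{p,q}} \leq \sqrt{p}\,d_M$ gives $h \leq \sqrt{p}\,\delta_\rho$.) Note that the inequality $h(\mathsf{M}_\rho) \leq \delta_\rho \leq \tfrac{1}{\sqrt{p}}h(\mathsf{M}_\rho)$, as printed in the statement, is self-contradictory for $p\geq 2$ whenever $h>0$; the statement must contain a typo, and the correct form is the one your inclusions actually produce. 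Do not say the inclusions ``translate directly into the desired comparison'' without writing the inequalities out and noticing the direction.

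Your lower bound in item (1) follows essentially the same route as the paper's Proposition~\ref{pro:volume}: a normal exponential tube of radius controlled by Ishihara's bound $\norm{\mathbb{I}}_\infty\leq pq$, shown to embed inside $\Omega_\rho$ (Lemmas~\ref{lem:intorno nel dominio} and~\ref{lem:embedding}), with a uniform lower bound on the Jacobian. Nothing to add there.

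For the finiteness $\mathrm{vol}_\rho < \infty$, you have correctly identified that your approach has a genuine gap, and it is a serious one: $\Omega_\rho$ is defined through the limit set (as the connected component of $\mathbb{H}^{p,q} - \bigcup_{z\in\Lambda_\rho}\mathbb{P}(z^\perp)$ containing the convex core), whereas the focal locus of the normal exponential map of $M$ is governed by the eigenvalues of the shape operators of $M$. These are unrelated objects, and there is no reason to expect $\Omega_\rho$ to coincide, even up to measure zero, with a normal tube over $M$ cut off by the focal locus. The paper sidesteps this entirely with a much more elementary argument (Lemma~\ref{lem:fundamental domain} and Corollary~\ref{cor:finite invariants}): it builds a Dirichlet-type fundamental domain
\[
F \;=\; \bigcap_{\gamma\in\Gamma}\{\,x\in\Omega_\rho \;:\; -\langle x,o\rangle \leq -\langle x,\rho(\gamma)o\rangle\,\}
\]
for a basepoint $o$ in the convex core, and shows that the closure $\overline{F}$ in $\mathbb{P}(\mathbb{R}^{p,q+1})$ avoids the limit set $\Lambda_\rho$ and is therefore a compact subset of $\mathbb{H}^{p,q}$. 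Then $\mathrm{vol}_\rho \leq \mathrm{vol}(\overline{F}) < \infty$ and $\mathrm{diam}_\rho < \infty$ are immediate. You should replace your finiteness argument with this one; the normal-tube approach buys you nothing here and rests on an identification that is unlikely to hold.
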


(See Corollary~\ref{cor:finite invariants} and Proposition~\ref{pro:volume} for a proof of item (1). Item (2) is a direct consequence of Theorem~\ref{thm:distance comparison}.) In the remainder of the introduction, we describe and comment the statements of the main theorems.

\subsection*{Volume and entropy}
Our first main result relates volume, entropy, and topology of convex cocompact representations.

\begin{thm*}
\label{thm:main2}
There exists a constant $c=c(p,q)>0$ such that the following holds. Let $\Gamma$ be the fundamental group of a closed orientable $p$-manifold $\mathsf{M}$. Then for every convex cocompact representation $\rho : \Gamma \to \mathrm{SO}(p,q+1)$
\[
\delta_\rho^p\cdot{\rm vol}_\rho\ge c \cdot \norm{\mathsf{M}},
\]
where $\norm{\mathsf{M}}$ denotes the {\rm simplicial volume} of $\mathsf{M}$. 
\end{thm*}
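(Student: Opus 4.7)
The strategy is to apply Gromov's main volume--entropy inequality on the Riemannian quotient $\mathsf{M}_\rho = M/\rho(\Gamma)$ of the maximal spacelike submanifold $M \subset \mathbb{H}^{p,q}$, and then transport the resulting bound to the pseudo-Riemannian invariants $\mathrm{vol}_\rho$ and $\delta_\rho$ using Theorem~II and the Proposition.

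Being spacelike and maximal, $M$ is a complete, simply connected (in particular contractible) Riemannian $p$-manifold on which $\rho(\Gamma) = \Gamma/K$ (with $K := \ker\rho$ finite) acts freely, properly, cocompactly, and isometrically. Hence $\mathsf{M}_\rho$ is a closed aspherical Riemannian $p$-manifold with $\pi_1 = \Gamma/K$, and by Ishihara's theorem~\eqref{eq:ricci bound} its Ricci curvature is bounded below by $-(p-1)$. The representation $\rho$ canonically induces a continuous map $c_\rho : \mathsf{M} \to \mathsf{M}_\rho$ by descending any $\rho$-equivariant map $\tilde{\mathsf{M}} \to M$ (which exists since $M$ is contractible). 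Gromov's main inequality, applied via equivariant straightening of a fundamental cycle of $\mathsf{M}$ in $M$ and projection to $\mathsf{M}_\rho$, yields
\[
\mathrm{vol}(\mathsf{M}_\rho) \cdot h(\mathsf{M}_\rho)^p \geq C_p \cdot \|(c_\rho)_*[\mathsf{M}]\|_{\mathsf{M}_\rho}
\]
for some dimensional constant $C_p > 0$, with $h(\mathsf{M}_\rho)$ the volume entropy.

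The key step is to identify $\|(c_\rho)_*[\mathsf{M}]\|_{\mathsf{M}_\rho}$ with $\|\mathsf{M}\|$. The map $c_\rho$ factors as the composition of a classifying map $c : \mathsf{M} \to B\Gamma$ and the projection $\pi : B\Gamma \to B(\Gamma/K) \simeq \mathsf{M}_\rho$. Gromov's mapping theorem gives $\|c_*[\mathsf{M}]\|_{B\Gamma} = \|\mathsf{M}\|$. The Gromov--Ivanov amenable extension theorem applied to $1 \to K \to \Gamma \to \Gamma/K \to 1$ (with $K$ finite, hence amenable) ensures that $\pi$ induces an isometric isomorphism in bounded cohomology $H_b^*(B(\Gamma/K); \mathbb{R}) \to H_b^*(B\Gamma; \mathbb{R})$, which by Hahn--Banach duality implies that $\pi_*$ preserves the Gromov (simplicial) norm on homology. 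Combining the two, $\|(c_\rho)_*[\mathsf{M}]\|_{\mathsf{M}_\rho} = \|\mathsf{M}\|$, and consequently
\[
\mathrm{vol}(\mathsf{M}_\rho) \cdot h(\mathsf{M}_\rho)^p \geq C_p \cdot \|\mathsf{M}\|.
\]

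By Theorem~II, $d_{\mathbb{H}^{p,q}} \leq \sqrt{p} \, d_M$ on $M$, so comparing orbital lattice-point counts yields $h(\mathsf{M}_\rho) \leq \sqrt{p} \, \delta_\rho$. By the Proposition, $\mu \cdot \mathrm{vol}(\mathsf{M}_\rho) \leq \mathrm{vol}_\rho$ for some dimensional constant $\mu = \mu(p,q) > 0$. Plugging both in,
\[
\delta_\rho^p \cdot \mathrm{vol}_\rho \geq \mu \, p^{-p/2} \cdot h(\mathsf{M}_\rho)^p \cdot \mathrm{vol}(\mathsf{M}_\rho) \geq \mu \, p^{-p/2} \, C_p \cdot \|\mathsf{M}\|,
\]
proving the theorem with $c = \mu \, p^{-p/2} \, C_p$. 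The main technical obstacle is the simplicial-norm identity $\|(c_\rho)_*[\mathsf{M}]\|_{\mathsf{M}_\rho} = \|\mathsf{M}\|$: a direct application of Gromov's inequality to $\mathsf{M}_\rho$ only gives the (potentially smaller) simplicial volume $\|\mathsf{M}_\rho\|$ on the right, and bridging the two manifolds requires combining Gromov's mapping theorem with the amenable extension theorem.
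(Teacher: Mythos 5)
Your proof is correct and follows essentially the same strategy as the paper: apply Gromov's main volume--entropy inequality on the Riemannian quotient $\mathsf{M}_\rho$ of the unique maximal spacelike submanifold, use $\mathrm{Ric}_{\mathsf{M}_\rho}\ge -(p-1)g$ from Ishihara's bound, then transfer to the pseudo-Riemannian invariants via Theorem~\ref{thm:distance comparison} (which gives $h(\mathsf{M}_\rho)\le\sqrt{p}\,\delta_\rho$) and Proposition~\ref{pro:volume} (which gives $\mu\,\mathrm{vol}(\mathsf{M}_\rho)\le\mathrm{vol}_\rho$). The one place you add something is the discussion of $\norm{(c_\rho)_*[\mathsf{M}]}_{\mathsf{M}_\rho}$ versus $\norm{\mathsf{M}}$: the paper's proof writes Gromov's bound directly with $\norm{\mathsf{M}}$ on the right-hand side, silently using that $\mathsf{M}$ and $\mathsf{M}_\rho$ have the same simplicial volume, while you make that identification explicit via the classifying map, Gromov's mapping theorem, and the amenable extension argument for the finite kernel $K=\ker\rho$. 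This is a legitimate point to address. Two remarks, though. First, the finite-kernel step can be dispensed with outright: $\Gamma$ is word-hyperbolic by \cite{DGK} and, being the fundamental group of a closed aspherical $p$-manifold, torsion-free, so $K$ is automatically trivial. Second, once $\rho$ is injective, $\mathsf{M}_\rho$ is a closed aspherical $p$-manifold with $\pi_1\cong\Gamma$, so $\norm{\mathsf{M}}=\norm{\mathsf{M}_\rho}$ follows directly by the mapping theorem (and the fact that the classifying map of $\mathsf{M}$ has degree $\pm1$ onto $\mathsf{M}_\rho\simeq B\Gamma$ whenever $\norm{\mathsf{M}}\neq 0$), so one may also simply apply Gromov's inequality to $\mathsf{M}_\rho$ with $\norm{\mathsf{M}_\rho}$ on the right. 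Finally, note that your constant $p^{-p/2}$ is the correct one; the $p^{p/2}$ appearing in the paper's proof (and the reversed chain $h(\mathsf{M}_\rho)\le\delta_\rho\le\frac{1}{\sqrt{p}}h(\mathsf{M}_\rho)$ in Proposition~\ref{pro:finite vol diam}) is an internal typographical slip -- the correct relations are $\frac{1}{\sqrt{p}}h(\mathsf{M}_\rho)\le\delta_\rho\le h(\mathsf{M}_\rho)$ -- but it does not affect the statement of the theorem, which only asserts the existence of some positive constant $c(p,q)$.
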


Comments:
\begin{itemize}
\item{In fact, it follows from our proof that a sharper version of Theorem~\ref{thm:main2} holds by replacing $\mathrm{vol}_\rho$ with $\mathrm{vol}(\mathsf{M}_\rho)$, the volume of the maximal spacelike $p$-submanifold of $\mathsf{N}_\rho$.}
\item{The {\em simplicial volume} is a topological invariant of $\mathsf{M}$ (in fact it only depends on the fundamental group $\Gamma$) introduced by Gromov in \cite{Gr}. Heuristically speaking, it measures the minimal number of singular simplices needed to represent the fundamental class $[\mathsf{M}]\in H_p(\mathsf{M},\mb{R})$.}
\item{By \cite{DGK}, convex cocompactness of $\rho$ implies that $\Gamma$ is word-hyperbolic and, hence, the simplicial volume $\norm{\mathsf{M}}$ is positive by results of Mineyev \cite{M}. In the case of a hyperbolic manifold $\mathsf{M}$, we have $\norm{\mathsf{M}}={\rm vol}(\mathsf{M})/v_p$ where $v_p$ is the volume of a regular ideal $p$-simplex in $\mb{H}^p$.}
\item{The inequality can be seen as a pseudo-Riemannian analog of a very general volume-entropy bound due to Gromov \cite{Gr}, who showed that there exists a constant $\kappa=\kappa(p)>0$ such that, for every closed, orientable, Riemannian $p$-manifold $\mathsf{M}$, we have 
\[
h(\mathsf{M})^p\cdot{\rm vol}(\mathsf{M})\ge\kappa\cdot \norm{\mathsf{M}} ,
\]
where $h(\mathsf{M})$ is the {\em volume entropy} of the universal cover of $\mathsf{M}$ and $\norm{\mathsf{M}}$ is the simplicial volume of $\mathsf{M}$.}
\end{itemize}

\subsection*{Entropy rigidity}

Next, we prove the following rigidity statement for the pseudo-Riemannian entropy:

\begin{thm*}
	\label{thm:main1}
	Let $\Gamma$ be the fundamental group of a closed orientable $p$-manifold $\mathsf{M}$. Then for every convex cocompact representation $\rho : \Gamma \to \mathrm{SO}(p,q+1)$
	\[
	\delta_\rho\le p-1 ,
	\]
	with equality if and only if $\rho$ preserves a totally geodesic copy of $\mb{H}^p$ in $\mb{H}^{p,q}$.
\end{thm*}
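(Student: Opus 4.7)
The strategy is to reduce the pseudo-Riemannian entropy estimate to a Riemannian entropy estimate on the maximal spacelike submanifold $\mathsf{M}_\rho$, where Ishihara's Ricci lower bound \eqref{eq:ricci bound} together with the distance comparison (Theorem~\ref{thm:distance comparison}) allow standard tools to apply.

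First I would establish the upper bound. Since $d_{\mathsf{M}_\rho} \leq d_{\mathbb{H}^{p,q}}$ on $\mathsf{M}_\rho$ by the distance comparison, choosing a basepoint on $\mathsf{M}_\rho$ and counting orbit points yields $\delta_\rho \leq h(\mathsf{M}_\rho)$, which is one half of Proposition~\ref{pro:finite vol diam}(2). The Ricci bound $\mathrm{Ric}_{\mathsf{M}_\rho} \geq -(p-1) g_{\mathsf{M}_\rho}$ together with Bishop--Gromov volume comparison yields $h(\mathsf{M}_\rho) \leq p-1$. Combining gives $\delta_\rho \leq p-1$.

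For the rigidity, suppose $\delta_\rho = p-1$, so $h(\mathsf{M}_\rho) = p-1$. I would appeal to the entropy rigidity theorem of Ledrappier--Wang, according to which any closed Riemannian $n$-manifold with $\mathrm{Ric} \geq -(n-1)g$ and volume entropy $n-1$ has constant sectional curvature $-1$. Applied to $\mathsf{M}_\rho$, this makes it hyperbolic, so in particular $\mathrm{Ric}_{\mathsf{M}_\rho} \equiv -(p-1) g_{\mathsf{M}_\rho}$ pointwise. On the other hand, unwinding the Gauss equation for the spacelike maximal submanifold $M \subset \mathbb{H}^{p,q}$ and using the maximal condition $\sum_j \mathbb{I}(X_j, X_j) = 0$ together with the negative-definiteness of the normal bundle identifies $\mathrm{Ric}_M + (p-1) g_M$ with a positive semi-definite quadratic expression in $\mathbb{I}$ that vanishes iff $\mathbb{I} \equiv 0$; this is the algebraic content of Ishihara's bound. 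Pointwise equality thus forces $\mathbb{I} \equiv 0$, so $M$ is totally geodesic. Since totally geodesic spacelike $p$-submanifolds of $\mathbb{H}^{p,q}$ are precisely the isometric copies of $\mathbb{H}^p$ (arising as $\mathbb{P}(V) \cap \mathbb{H}^{p,q}$ for $V \subset \mathbb{R}^{p,q+1}$ of signature $(p,1)$), and $\rho(\Gamma)$ preserves $M$ by construction, we conclude. The converse is immediate: on a totally geodesic $\mathbb{H}^p$, the pseudo-distance $d_{\mathbb{H}^{p,q}}$ restricts to the intrinsic hyperbolic distance, and the cocompact action gives critical exponent $p-1$.

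The main obstacle is the entropy rigidity step: upgrading the asymptotic equality $h(\mathsf{M}_\rho) = p-1$ to the pointwise identity $\mathrm{Ric}_{\mathsf{M}_\rho} \equiv -(p-1)g_{\mathsf{M}_\rho}$ does not follow from Bishop--Gromov alone and requires a genuine rigidity input. A natural alternative more in keeping with the paper's pseudo-Riemannian viewpoint would bypass Ledrappier--Wang by working directly with the Patterson--Sullivan theory on $\partial \mathbb{H}^{p,q}$ of Glorieux and Monclair: one would seek to show that $\delta_\rho = p-1$ forces the limit set of $\rho$ to be contained in a spacelike round $(p-1)$-sphere, which would in turn imply that $\rho(\Gamma)$ preserves the corresponding $\mathbb{H}^p$.
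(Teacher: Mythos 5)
Your proposal is correct and follows essentially the same route as the paper: reduce to the maximal spacelike submanifold $\mathsf{M}_\rho$ of Seppi--Smith--Toulisse, use Ishihara's Ricci bound together with the distance comparison to get $\delta_\rho \leq h(\mathsf{M}_\rho) \leq p-1$ via Bishop--Gromov, invoke Ledrappier--Wang to upgrade entropy equality to constant curvature $-1$, and then run the traced Gau{\ss} equation backward to force $\mathbb{I}\equiv 0$. The closing remark about an alternative Patterson--Sullivan approach is a reasonable speculation but is not what the paper does, and the ``obstacle'' you flag is in fact already resolved by the Ledrappier--Wang citation you make.
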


Comments:
\begin{itemize}
	\item{The inequality in Theorem~\ref{thm:main1} was originally proved by Glorieux and Monclair \cite{GM18} and by now there are several generalizations and refinements of it, see e.g. \cites{C20,PSW23,GMT}. Our main contribution concerns the rigidity in case of equality, which answers positively to \cite{GM18}*{Question~1.6}.}
	\item{The existence of maximal submanifolds from \cite{SST23}, together with properties \eqref{eq:ricci bound} and \eqref{eq:distance comparison}, allows to translate the entropy rigidity problem into a purely Riemannian framework, where one can apply the work of Ledrappier and Wang \cite{LW09}.}
	\item{If $\delta_\rho=p-1$, then $\rho$ preserves a totally geodesic $\mb{H}^p\subset\mb{H}^{p,q}$ and, hence, it is conjugate into ${\rm S}({\rm O}(p,1)\times{\rm O}(q))$.}
	\item{It is known that there exist closed orientable $p$-manifolds, whose fundamental groups $\Gamma$ admit convex cocompact representations inside $\mathrm{SO}(p,q+1)$, that are not virtually isomorphic to uniform lattices in ${\rm SO}(p,1)$, by the works of Monclair, Schlenker, and Tholozan \cite{MST} and Lee and Marquis \cite{LM}. For them we necessarily have $\delta_\rho<p-1$.}
	\item{When $p = 2$, this statement is due to Collier, Tholozan, and Toulisse \cite{CTT}, see also \cite{MV2} for a different proof.}
	\item{Theorem~\ref{thm:main1} is a pseudo-Riemannian analog of various entropy rigidity results, such as the works of Besson, Courtois, and Gallot \cite{BCG}, Hamenst\"{a}dt \cite{H}, Ledrappier and Wang \cite{LW09}, and Barthelm\'{e}, Marquis, and Zimmer \cite{BMZ} in Riemannian, hyperbolic, and convex projective geometries, respectively.}
\end{itemize}

\subsection*{Diameter and complexity}
Lastly, we relate the diameter of $\mathsf{N}_\rho = \Omega_\rho/\rho(\Gamma)$ to the complexity of the group $\Gamma$.

\begin{thm*}
\label{thm:main4}
For every $p \geq 2$ and $D > 0$, there exist at most $n$ homeomorphism classes of closed, aspherical $p$-manifolds whose fundamental groups $\Gamma$ admit \emph{faithful} convex cocompact representations $\rho : \Gamma \to \mathrm{SO}(p,q+1)$ with pseudo-Riemannian diameter ${\rm diam}_\rho\le D$, for some $n = n(D,p) \in \mathbb{N}$ independent of $q \geq 1$.
\end{thm*}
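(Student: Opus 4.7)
The strategy is to use the maximal spacelike submanifold provided by Seppi--Smith--Toulisse to translate the pseudo-Riemannian diameter bound into Riemannian-geometric bounds on a closed aspherical manifold, and then to invoke a Riemannian finiteness theorem.

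First, let $M \subset \Omega_\rho$ be the unique $\rho$-invariant complete spacelike maximal $p$-submanifold, so that $\mathsf{M}_\rho = M/\rho(\Gamma)$ is a closed Riemannian $p$-manifold. Since $M$ is spacelike, any two points of $M$ are space-related, and by Theorem~\ref{thm:distance comparison} the induced Riemannian distance satisfies $d_M \le d_{\mathbb{H}^{p,q}}$ on $M$; hence the Riemannian diameter of $\mathsf{M}_\rho$ is at most $\mathrm{diam}_\rho \le D$. Together with Ishihara's estimate~\eqref{eq:ricci bound}, this yields the $q$-independent bounds
\[
\mathrm{Ric}_{\mathsf{M}_\rho} \ge -(p-1)\, g_{\mathsf{M}_\rho}, \qquad \mathrm{diam}(\mathsf{M}_\rho, g_M) \le D .
\]
Moreover $\mathsf{M}_\rho$ is aspherical (as $M$ is diffeomorphic to $\mathbb{R}^p$) with fundamental group $\rho(\Gamma) \cong \Gamma$ by faithfulness of $\rho$. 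Since $\mathsf{M}$ and $\mathsf{M}_\rho$ are both $K(\Gamma,1)$'s of the same dimension, they are homotopy equivalent, and by the Borel conjecture for word-hyperbolic groups (Bartels--L\"uck) they are in fact homeomorphic. It therefore suffices to bound, in terms of $p$ and $D$ only, the number of homeomorphism types of the $\mathsf{M}_\rho$'s arising in this way.

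By Gromov's precompactness theorem, the family of closed Riemannian $p$-manifolds satisfying the two bounds above is precompact in the Gromov--Hausdorff topology, and Bishop--Gromov bounds their volumes uniformly by some $V(p,D)$. To upgrade this precompactness into finiteness one typically needs a lower bound on the volume (equivalently, on the injectivity radius) of $\mathsf{M}_\rho$. Here I would exploit the hyperbolicity of $\Gamma$: the Kapovitch--Wilking generalized Margulis lemma for $\mathrm{Ric}$ bounded below asserts that the image in $\pi_1$ of any sufficiently small metric ball is virtually nilpotent, while a torsion-free word-hyperbolic group has only cyclic virtually nilpotent subgroups. This rigid structural constraint, combined with the volume--entropy inequality and the upper bound $h(\mathsf{M}_\rho) \le p-1$ coming from the Ricci bound, should rule out volume collapse and place the family in the regime of Anderson's diffeomorphism finiteness theorem, which then gives the required bound $n=n(p,D)$.

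The hardest part is precisely this non-collapsing step. Because the second fundamental form bound $\norm{\mathbb{I}}_\infty \le pq$ from~\eqref{eq:ricci bound} degrades with $q$, there is no $q$-independent upper sectional curvature bound on $\mathsf{M}_\rho$, so the standard Cheeger--Gromov--Anderson machinery cannot be applied directly. The crucial new ingredient is the incompatibility between word-hyperbolicity of $\Gamma$ and the nilpotent-type collapse predicted by the generalized Margulis lemma, together with the entropy/volume rigidity of Theorems~\ref{thm:main1} and~\ref{thm:main2}; these are what force a uniform volume lower bound independent of the codimension~$q$.
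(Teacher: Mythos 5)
Your high-level strategy matches the paper's: pass to the $\rho$-invariant maximal spacelike submanifold, use Theorem~\ref{thm:distance comparison} and Ishihara's bound to get $q$-independent bounds ${\rm Ric}_{\mathsf{M}_\rho}\ge-(p-1)g_{\mathsf{M}_\rho}$ and ${\rm diam}(\mathsf{M}_\rho)\le D$, feed these into a Riemannian finiteness theorem, and upgrade from group isomorphism to homeomorphism via Bartels--L\"uck. You also correctly identify the crux: a uniform lower bound on ${\rm vol}(\mathsf{M}_\rho)$, and you correctly observe that the lack of a $q$-independent upper sectional curvature bound rules out the standard $C^{1,\alpha}$-compactness/diffeomorphism-finiteness machinery.

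The gap is precisely the non-collapsing step, and your proposed route does not close it. The Kapovitch--Wilking generalized Margulis lemma shows that short-loop subgroups are virtually nilpotent, and word-hyperbolicity forces them to be virtually cyclic, but this alone does not prevent the whole manifold from being thin; one would still have to show that the ``thick part'' is nonempty with volume bounded from below, and you do not supply an argument for that. Likewise, the Gromov volume--entropy inequality $h^p\,{\rm vol}\ge\kappa\norm{\mathsf{M}}$ combined with $h\le p-1$ only yields ${\rm vol}\ge\kappa\norm{\mathsf{M}}/(p-1)^p$, which is not a \emph{uniform} lower bound since $\norm{\mathsf{M}}$ varies with the manifold. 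The paper's argument is both shorter and actually complete: since $\rho$ is convex cocompact, $\Gamma$ is word-hyperbolic \cite{DGK}, so by Mineyev \cite{M} the aspherical manifold $\mathsf{M}_\rho$ has \emph{positive} simplicial volume; Gromov's Isolation Theorem \cite{Gr} then gives a dimensional constant $\ep_p>0$ such that ${\rm Ric}\ge-(p-1)g$ and ${\rm vol}<\ep_p$ would force $\norm{\mathsf{M}_\rho}=0$, a contradiction. Hence ${\rm vol}(\mathsf{M}_\rho)\ge\ep_p$. One then applies Anderson's fundamental-group finiteness theorem (Theorem~\ref{thm:anderson}), which requires only a Ricci lower bound, a diameter upper bound, and a volume lower bound -- not a diffeomorphism finiteness theorem, which would need curvature bounds you correctly note are unavailable. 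Replace your Margulis-lemma/entropy sketch with the Mineyev plus Gromov Isolation argument, and cite Anderson's $\pi_1$-finiteness (not diffeomorphism finiteness), and the proof closes.
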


Comments:
\begin{itemize}
\item{In fact, Theorem~\ref{thm:main4} applies also if we replace the function $\mathrm{diam}_\rho$ with $\mathrm{diam}(\mathsf{M}_\rho)$, the diameter of the maximal spacelike $p$-submanifold of $\mathsf{N}_\rho$.}
\item{This a pseudo-hyperbolic analog of various finiteness results in hyperbolic geometry, see for example Thurston \cite{Th}*{\S~5.11}, Canary, Epstein, and Green \cite{CEG}*{\S~I.3}, Benedetti and Petronio \cite{BP92}*{\S~E}.}
\item{By work of Thurston, there exist infinitely many closed hyperbolic 3-manifolds of volume bounded by $V$ (see \cite{Th}*{Chapter~4}). On the other hand, if $p\ge 4$, Wang \cite{W} proved that the number of closed hyperbolic $p$-manifolds with volume bounded from above by $V$ is finite. It seems natural to ask whether this holds true also for the number of homeomorphism classes of closed, aspherical $p$-manifolds whose fundamental groups admit faithful convex cocompact representations in ${\rm SO}(p,q+1)$ with $p\ge 4$ and pseudo-Riemannian volume ${\rm vol}_\rho\le V$.}
\end{itemize}

As opposed to the Riemannian setting, there are two central difficulties that one has to deal with when working in $\mb{H}^{p,q}$:
\begin{enumerate}
\item{The function $d_{\mb{H}^{p,q}}(\bullet,\bullet)$ is not a distance. For instance, metric balls with respect to $d_{\mb{H}^{p,q}}$ are not compact.}
\item{Point stabilizers for the action ${\rm SO}(p,q+1)\curvearrowright\mb{H}^{p,q}$ are not compact.}
\end{enumerate}

This absence of compactness explains what goes wrong if one tries to approach the problem with standard Riemannian tools. In order to circumvent the issue, we need to find suitable replacement for the two properties.

\begin{thm*}
\label{thm:main5}
Let $\Gamma$ be the fundamental group of a closed, aspherical $p$-manifold. Denote by $\mathcal{CC}(\Gamma)$ the space of convex cocompact representations of $\Gamma$ in ${\rm SO}(p,q+1)$ up to conjugation and up to outer automorphisms of $\Gamma$. Then the diameter function
\[
\begin{matrix}
	\mathcal{CC}(\Gamma) & \longrightarrow & (0, \infty) \\
	[\rho] & \longmapsto & {\rm diam}_{\rho} ,
\end{matrix}
\]
is proper.
\end{thm*}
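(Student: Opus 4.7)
The plan is to show that any sequence $[\rho_n] \in \mathcal{CC}(\Gamma)$ with $\mathrm{diam}_{\rho_n} \le D$ admits a subsequence converging in $\mathcal{CC}(\Gamma)$. The strategy is to reduce to a Riemannian compactness statement for the maximal spacelike submanifolds $\mathsf{M}_{\rho_n}$, and then transfer this compactness back to the representations via a normalization at the level of oriented spacelike $p$-frames, which circumvents the non-compactness of point stabilizers in $\mathrm{SO}(p,q+1)$.

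\textbf{Riemannian compactness of the maximal submanifolds.} By \eqref{eq:distance comparison} we have $\mathrm{diam}(\mathsf{M}_{\rho_n}) \le \mathrm{diam}_{\rho_n} \le D$. The bound $\norm{\mathbb{I}}_\infty \le pq$ in \eqref{eq:ricci bound}, combined with the Gauss equation for $M_n \hookrightarrow \mathbb{H}^{p,q}$, provides uniform two-sided sectional curvature bounds on $\mathsf{M}_{\rho_n}$. For the volume, the sharper form of Theorem~\ref{thm:main2} mentioned after its statement (with $\mathrm{vol}(\mathsf{M}_\rho)$ in place of $\mathrm{vol}_\rho$), together with the entropy bound $\delta_{\rho_n} \le p-1$ from Theorem~\ref{thm:main1}, yields
\[
\mathrm{vol}(\mathsf{M}_{\rho_n}) \;\ge\; \frac{c \, \norm{\mathsf{M}}}{(p-1)^p} \;>\; 0,
\]
with positivity of $\norm{\mathsf{M}}$ ensured by Mineyev's theorem since $\Gamma$ is word hyperbolic (passing to an orientation double cover if necessary). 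Cheeger's lemma then gives a uniform lower bound on the injectivity radius of $\mathsf{M}_{\rho_n}$, and Cheeger-Gromov compactness supplies, after extracting a subsequence, diffeomorphisms $\phi_n : \mathsf{M} \to \mathsf{M}_{\rho_n}$ such that $\phi_n^{\ast} g_{\mathsf{M}_{\rho_n}}$ converges in $C^{1,\alpha}$ to a limit metric $g_\infty$ on $\mathsf{M}$.

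\textbf{Normalization and extraction of the limit representation.} Since $\mathcal{CC}(\Gamma)$ is taken modulo $\mathrm{Out}(\Gamma)$, replacing each $\rho_n$ by $\rho_n \circ (\phi_n)_\ast^{-1}$ we may assume that the lifts $\widetilde\phi_n : \widetilde{\mathsf{M}} \to M_n \subset \mathbb{H}^{p,q}$ are $\rho_n$-equivariant for a fixed identification $\Gamma = \pi_1(\mathsf{M})$. Fix $\widetilde o \in \widetilde{\mathsf{M}}$ with an oriented orthonormal frame $F$ of $T_{\widetilde o}\widetilde{\mathsf{M}}$ and transport them via $d\widetilde\phi_n$ to a pair $(o_n, F_n)$, where $F_n$ is an oriented spacelike $p$-frame at $o_n \in M_n$. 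The crucial observation bypassing the non-compactness of point stabilizers is that $\mathrm{SO}(p,q+1)$ acts transitively on such pairs with \emph{compact} stabilizer isomorphic to $\mathrm{S}(\mathrm{O}(p) \times \mathrm{O}(q))$. Choosing $g_n \in \mathrm{SO}(p,q+1)$ sending $(o_n, F_n)$ to a fixed reference $(o^{\ast}, F^{\ast})$ and replacing $\rho_n$ by $g_n \rho_n g_n^{-1}$, we may assume $(o_n, F_n) = (o^{\ast}, F^{\ast})$ for all $n$. For a fixed finite generating set $S$ of $\Gamma$, the Cheeger-Gromov convergence provides uniform \v{S}varc--Milnor constants, so $d_{M_n}(o^{\ast}, \rho_n(s) o^{\ast})$ is uniformly bounded for $s \in S$. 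The uniform bound on the second fundamental form of $M_n$ confines both $\rho_n(s) o^{\ast}$ and the image frame $\rho_n(s) F^{\ast}$ to a fixed compact subset of the bundle of spacelike $p$-frames in $\mathbb{H}^{p,q}$; equivalently, $\rho_n(s)$ lies in a compact subset of $\mathrm{SO}(p,q+1)$. A diagonal extraction then yields a pointwise limit $\rho_\infty : \Gamma \to \mathrm{SO}(p,q+1)$, and the $C^{1,\alpha}$ limit submanifold provides a $\rho_\infty$-invariant complete spacelike maximal $p$-submanifold on which $\rho_\infty(\Gamma)$ acts properly and cocompactly. Combining this geometric certificate with the characterizations of convex cocompactness in \cite{DGK, SST23, BK}, we conclude that $\rho_\infty \in \mathcal{CC}(\Gamma)$.

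\textbf{Main obstacle.} The core difficulty, as emphasized in the introduction, is that point stabilizers of $\mathrm{SO}(p,q+1) \curvearrowright \mathbb{H}^{p,q}$ and pseudo-metric balls in $\mathbb{H}^{p,q}$ are non-compact, so standard Riemannian compactness arguments do not apply to the representations themselves. The resolution is to run the entire compactness argument on the Riemannian maximal submanifolds (exploiting \eqref{eq:ricci bound}, \eqref{eq:distance comparison}, and Theorems~\ref{thm:main1}--\ref{thm:main2}) and to lift the normalization to oriented spacelike $p$-frames, whose stabilizer \emph{is} compact. The subtlest remaining point is arguing that the pointwise limit $\rho_\infty$ is actually convex cocompact rather than merely discrete and faithful, which is handled by the limit maximal submanifold produced by Cheeger-Gromov convergence.
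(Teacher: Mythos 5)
Your proof targets the same basic structure as the paper's (Cheeger--Gromov compactness on the maximal submanifolds $\mathsf{M}_{\rho_n}$, then extraction of a limit representation), but the extraction step is carried out by a genuinely different mechanism. The paper works in the Riemannian symmetric space $\mathbb{X}_{p,q+1}$ of $\mathrm{SO}(p,q+1)$: after building a $2$-Lipschitz diffeomorphism $f_n : \mathsf{M}_{n_0} \to \mathsf{M}_n$ and marking by $\rho_n' = \rho_n \circ (f_n)_*$, it bounds $\log \lambda_{\max}(\rho_n'(\gamma))$ uniformly for $\gamma$ in a bounded-length ball of $\Gamma$ using Theorem~\ref{thm:distance comparison}, and then invokes Breuillard--Fujiwara's estimate on the joint minimal displacement to confine $\rho_n'(S)$ to a compact subset of $\mathrm{SO}(p,q+1)$. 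You instead normalize in the bundle of pointed spacelike $p$-frames, exploiting the crucial fact (which the paper circumvents differently) that the stabilizer $\mathrm{S}(\mathrm{O}(p)\times\mathrm{O}(q))$ of such a frame is compact, and then confine $\rho_n(s)(o^*,F^*)$ using the graph structure of maximal submanifolds. Both approaches are defensible; yours avoids the symmetric space and Breuillard--Fujiwara entirely, at the cost of relying more heavily on the compactness criterion for pointed maximal submanifolds. You also reach the volume lower bound for Cheeger--Gromov through Theorem~\ref{thm:main2} combined with $\delta_{\rho_n}\le p-1$, whereas the paper cites Gromov's isolation theorem directly in the proof of Theorem~\ref{thm:main4} (Claim A there); both work.

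Two caveats. First, the claim that bounded intrinsic $d_{M_n}(o^*,\rho_n(s)o^*)$ plus the second-fundamental-form bound confines $\rho_n(s)o^*$ to a compact set is correct but not immediate: the projection $\pi:\widehat{\mathbb{H}}^{p,q}\to\mathbb{D}^p$ is distance-\emph{expanding} on $M_n$ (cf.\ \eqref{eq:differential}), so bounded $d_{M_n}$ does not trivially bound the projection. The clean justification is that, after your normalization, the pairs $(o^*,T_{o^*}(g_n M_n))$ are fixed, so the pointed maximal submanifolds lie in a compact subset of $\mathcal{M}\!\mathit{ax}_{p,q}$ by Theorem~\ref{thm:compactness maximal} (i.e.\ \cite{SST23}*{Theorem~5.3}), and the intrinsic-ball map $(o,M)\mapsto B_M(o,R)$ is continuous with compact values in the Hausdorff topology; you should cite this compactness criterion explicitly, since it is what does the work. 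Second, your final step, obtaining convex cocompactness of $\rho_\infty$ from the "geometric certificate'' of a limit invariant maximal submanifold, is not a characterization provided directly by \cite{DGK}, \cite{SST23}, or \cite{BK}; the existence of a cocompact invariant complete spacelike maximal $p$-submanifold is a \emph{consequence} of convex cocompactness in \cite{SST23}, not stated there as an equivalent condition. The clean resolution here, and the one the paper uses, is Beyrer--Kassel's theorem that the set of convex cocompact representations of a fixed word-hyperbolic group $\Gamma$ is closed in $\mathrm{Hom}(\Gamma,\mathrm{SO}(p,q+1))$; once you have pointwise convergence $\rho_n'\to\rho_\infty$, that closedness immediately gives $\rho_\infty\in\mathcal{CC}(\Gamma)$ without additional geometry.
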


In fact, the proof of Theorem~\ref{thm:main5} shows that the same holds if we replace the role of the function $\mathrm{diam}_\rho$ with the diameter of the maximal spacelike $p$-submanifold $\mathsf{M}_\rho$. Note that {\em bending} examples show that the moduli space $\mc{CC}(\Gamma)$ is {\em non-compact} for every $\Gamma$ which is the fundamental group of a closed hyperbolic $p$-manifold containing a totally geodesic hypersurface. We also remark that, by standard arithmetic constructions, it is possible to construct infinitely many different commensurability classes of groups $\Gamma$ with these properties.  

The statement is a pseudo-Riemannian compactness criterion which is analogue to Mumford's Compactness Theorem \cite{Mum} for hyperbolic surfaces. In its proof we use that the set of convex cocompact representations of a given $\Gamma$ is closed in the space of representations, as shown by Beyrer and Kassel \cite{BK}.

Recall that, when $p=2$, convex cocompact representations of a fixed surface group $\Gamma$ into ${\rm SO}(2,q+1)$ coincide with the class of so-called {\em maximal representations} (as defined in \cite{BIW}, see also \cite{CTT}) and, by the Dehn-Nielsen-Baer Theorem (see e.g. \cite{FM}*{Theorem~8.1}), ${\rm Out}(\Gamma)$ is the (extended) {\em mapping class group} of the surface. Thanks to the better control in such low dimensional setting, we can replace the diameter with \emph{systole} and \emph{volume} functions, where the systole of a convex cocompact representation $\rho$ is defined by 
\[
{\rm sys}_\rho:=\min_{\gamma\in\Gamma-\{1\}}\ell_\rho(\gamma).
\]

We have the following.

\begin{thm*}
\label{thm:main6}
Let $S$ be a closed orientable surface of genus at least $2$ and let $\pi = \pi_1(S)$ denote its fundamental group. For every $\ep,V>0$ and $q\in\mb{N}$, the mapping class group ${\rm Out}(\pi)$ acts cocompactly on
\[
\mathcal{M}(q,\ep,V)=\left\{\rho\in{\rm Hom}(\pi,{\rm SO}(2,q+1))\left|
\begin{array}{c}
\rho\,\,{\rm maximal},\\ 
{\rm vol}_{\rho}\le V, {\rm sys}_{\rho}\ge\ep\\
\end{array}
\right.\right\} / \mathrm{SO}(2,q+1) .
\]
\end{thm*}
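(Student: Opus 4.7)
The plan is to reduce Theorem~\ref{thm:main6} to Theorem~\ref{thm:main5} by showing that, for a maximal representation $\rho:\pi\to{\rm SO}(2,q+1)$, the conditions ${\rm sys}_\rho\ge\epsilon$ and ${\rm vol}_\rho\le V$ force an upper bound on the Riemannian diameter ${\rm diam}(\mathsf{M}_\rho)$ of the associated maximal surface, depending only on $\epsilon,V,q$. The variant of Theorem~\ref{thm:main5} mentioned after its statement---properness of $[\rho]\mapsto{\rm diam}(\mathsf{M}_\rho)$ on $\mathcal{CC}(\pi)$---then gives that $\{[\rho]:{\rm diam}(\mathsf{M}_\rho)\le D\}/{\rm Out}(\pi)$ is compact, and upper semicontinuity of ${\rm sys}_\rho$ together with continuity of ${\rm vol}_\rho$ (both standard for $P_1$-Anosov representations) would make $\mathcal{M}(q,\epsilon,V)/{\rm SO}(2,q+1)$ a closed subset of it, yielding the desired cocompactness.

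To obtain the diameter bound, I would work directly with the closed orientable Riemannian surface $\mathsf{M}_\rho$ of genus equal to that of $S$. First, combining \eqref{eq:ricci bound} with the Gauss equation for a maximal spacelike submanifold of $\mathbb{H}^{2,q}$ yields a two-sided curvature bound $-1\le K_{\mathsf{M}_\rho}\le c(q)$: the ambient spacelike sectional curvature is $-1$, the maximality condition gives $\mathbb{I}(Y,Y)=-\mathbb{I}(X,X)$ on any orthonormal tangent frame, and $\|\mathbb{I}\|_\infty\le 2q$ controls the remaining Gauss terms (care is needed since the normal bundle is negative definite, but the signs work out in the direction we want). Second, Proposition~\ref{pro:finite vol diam}\,(1) gives ${\rm vol}(\mathsf{M}_\rho)\le V/\mu(2,q)$, and Theorem~\ref{thm:distance comparison} transfers the systole bound to ${\rm sys}(\mathsf{M}_\rho)\ge\epsilon/\sqrt{2}$ by comparing translation lengths of $d_M$ and $d_{\mathbb{H}^{2,q}}$. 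With these two-sided curvature bounds and the systole bound, Klingenberg's lemma provides a uniform lower bound $\iota=\iota(\epsilon,q)>0$ on the injectivity radius of $\mathsf{M}_\rho$; then, covering a length-minimizing diameter-realizing geodesic by disjoint balls of radius $\iota/2$---each with volume bounded below by Bishop--Gromov comparison against $K=-1$---converts the volume bound into a uniform bound ${\rm diam}(\mathsf{M}_\rho)\le D(\epsilon,V,q)$.

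The most delicate step, in my view, is the bookkeeping in the Gauss equation: translating the bound $\|\mathbb{I}\|_\infty\le 2q$, with the norm computed via the negative definite ambient inner product on the normal bundle, into a genuinely two-sided control on $K_{\mathsf{M}_\rho}$ requires careful sign tracking. Once this two-sided curvature bound is in place, the rest of the argument combines standard Riemannian comparison geometry with the already-established Theorem~\ref{thm:main5}, and no further ingredients are needed.
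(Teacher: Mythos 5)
Your overall strategy coincides with the paper's: reduce to the properness of the diameter function (Theorem~\ref{thm:main5}, in its $\mathrm{diam}(\mathsf{M}_\rho)$ variant) by showing that $\mathrm{vol}_\rho\le V$ and $\mathrm{sys}_\rho\ge\epsilon$ force $\mathrm{diam}(\mathsf{M}_\rho)\le D(\epsilon,V,q)$, and then invoke closedness of $\mathcal{M}(q,\epsilon,V)$. The difference lies in how you control the injectivity radius and the volume of small balls. The paper exploits a result specific to $p=2$, namely that the induced metric on a maximal spacelike surface in $\mathbb{H}^{2,q}$ is nonpositively curved (Labourie--Toulisse); this immediately gives $\mathrm{inj}(\mathsf{M}_\rho)=\tfrac12\,\mathrm{sys}(\mathsf{M}_\rho)$ and a Euclidean lower bound on ball volumes via G\"unther's comparison, with no need for Ishihara's bound or Klingenberg. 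You instead derive an upper curvature bound $K_{\mathsf{M}_\rho}\le c(q)$ from Ishihara via the Gauss equation and apply Klingenberg's lemma; this is also correct, only slightly less tight (you get $\mathrm{inj}\ge\min(\pi/\sqrt{c(q)},\mathrm{sys}/2)$), and it has the modest advantage of not requiring the Labourie--Toulisse nonpositivity result.

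One technical slip you should repair: the lower bound on ball volumes does \emph{not} come from Bishop--Gromov against $K=-1$. Bishop--Gromov requires a lower Ricci bound and yields an \emph{upper} bound on ball volumes, which is the wrong direction; and the correct comparison constant is the \emph{upper} sectional curvature bound, not $-1$. What you need is G\"unther's (Bishop's) inequality: for $K\le\Lambda$ and $r\le\min(\mathrm{inj},\pi/\sqrt{\Lambda})$, the volume of a ball of radius $r$ is at least that in the simply connected space form of curvature $\Lambda$. Here $\Lambda=c(q)\ge 0$ (or $\Lambda=0$ if one uses nonpositive curvature), so the comparison model is a round sphere or Euclidean space, never $\mathbb{H}^2$. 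With that fix the argument closes: a diameter-realizing geodesic supports roughly $\mathrm{diam}/2r$ disjoint balls of radius $r\approx\mathrm{inj}/2$, each of volume uniformly bounded below, which together with $\mathrm{vol}(\mathsf{M}_\rho)\le V/\mu$ from Proposition~\ref{pro:finite vol diam} gives the desired bound on $\mathrm{diam}(\mathsf{M}_\rho)$.
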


We conclude the introduction with a few words on the ingredients of the proofs.

\subsection*{Spatial distances on maximal submanifolds}

As we mentioned before, the key input for Theorems~\ref{thm:main2}, \ref{thm:main1}, and \ref{thm:main4} are properties \eqref{eq:ricci bound} and \eqref{eq:distance comparison} of ($\rho$-invariant) spacelike maximal $p$-submanifolds of $\mathbb{H}^{p,q}$, whose existence and uniqueness has been established by the recent work of Seppi, Smith, and Toulisse \cite{SST23}. In turn, property \eqref{eq:distance comparison} follows from the next crucial statement:

\begin{thm*}
\label{thm:main3}
Let $M$ be a complete, spacelike, maximal $p$-submanifold of $\mathbb{H}^{p,q}$, and denote by $\beta : M \to \mb{R}$ the restriction to $M$ of either the pseudo-Riemaniann distance from some $x\in M$ or the pseudo-Riemannian Busemann function associated with some $\theta\in\partial M$. Then:
\[
1 \leq \norm{\nabla \beta}_{M,\infty}\le\sqrt{p} ,
\]
where $\norm{\bullet}_{M,\infty}$ denotes the $L^\infty$-norm on $M$. Moreover, 
\begin{enumerate}
	\item $\norm{\nabla \beta}_{M,\infty} = 1$ if and only if $M$ is a totally geodesic copy of $\mathbb{H}^p$.
	\item If there exists $y \in M$ such that $\norm{\nabla \beta}_{y} = \sqrt{p}$, then $\nabla^2 \beta|_y \equiv 0$.
\end{enumerate}
\end{thm*}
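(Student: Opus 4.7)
The plan is to translate the statement into an analytic estimate on the Riemannian manifold $M$, leveraging the maximality hypothesis ($H=0$) and the Ricci/second-fundamental-form bounds \eqref{eq:ricci bound} via a Bochner-type argument.

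For the \textbf{lower bound}, I decompose the ambient pseudo-gradient $\bar\nabla\beta\in T_y\mathbb{H}^{p,q}$ as $\bar\nabla\beta=\nabla^M\beta+w$ with $w\in N_yM$. A short computation on the pseudo-hyperboloid (writing $\beta$ via a linear functional on $\mathbb R^{p,q+1}$) gives $|\bar\nabla\beta|^2=1$, and since $N_yM$ has signature $(0,q)$ the pseudo-orthogonal decomposition reads
\[
1=|\bar\nabla\beta|^2=|\nabla^M\beta|^2-|w|_+^2,
\]
where $|\cdot|_+$ is the Riemannian norm induced by $-\langle\cdot,\cdot\rangle|_{N_yM}$. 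Thus $|\nabla^M\beta|^2=1+|w|_+^2\ge 1$. If equality $|\nabla^M\beta|\equiv 1$ holds on $M$, then $w\equiv 0$ and $\mathrm{Hess}^M\beta$ reduces to the restriction of the ambient Hessian, namely $\coth\beta\cdot(g_M-d\beta\otimes d\beta)$ in the distance case (or $g_M-d\beta\otimes d\beta$ in the Busemann case). Plugging this into Bochner's identity together with the Gauss expression $\mathrm{Ric}_M(X,X)=-(p-1)|X|^2+\sum_i|\mathbb{I}(e_i,X)|_+^2$ (a refinement of \eqref{eq:ricci bound} that uses $H=0$) forces $\mathbb{I}(\cdot,\nabla\beta)\equiv 0$. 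Differentiating this identity along $\nabla^M\beta$ and invoking the Codazzi equation produces an ODE $\nabla^\perp_{\nabla\beta}\mathbb{I}=-\coth\beta\cdot\mathbb{I}$ along the gradient-flow lines; integrating it against the uniform bound $\|\mathbb{I}\|_\infty\le pq$ from \eqref{eq:ricci bound} forces $\mathbb{I}\equiv 0$, so $M$ is a totally geodesic copy of $\mathbb{H}^p$.

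For the \textbf{upper bound}, let $F$ denote the ambient lift of $\beta$: $F=\cosh\beta$ in the distance case and $F=e^\beta$ in the Busemann case, each arising as the restriction to $\mathbb{H}^{p,q}$ of a linear functional on $\mathbb{R}^{p,q+1}$. A direct pseudo-hyperboloid calculation yields $\overline{\mathrm{Hess}}\,F=F\cdot g_{\mathbb{H}^{p,q}}$; restricting to $M$ and using $H=0$ gives
\[
\Delta^M F = pF,\qquad \mathrm{Hess}^M F = F g_M + C,\qquad \operatorname{tr}(C)=0,
\]
where $C(X,Y)=\langle\mathbb{I}(X,Y),(\bar\nabla F)^\perp\rangle$. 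The identity $|\mathrm{Hess}^M F|^2=pF^2+|C|^2$, combined with Bochner's formula for $|\nabla^M F|^2$ and \eqref{eq:ricci bound}, produces a differential inequality for $Q:=|\nabla^M\beta|^2$. At an interior maximum $y$ of $Q$, the condition $\nabla Q|_y=0$ forces $\mathrm{Hess}^M\beta(\nabla\beta,\cdot)|_y\equiv 0$; expanding $\mathrm{Hess}^M\beta=\coth\beta(g_M-d\beta\otimes d\beta)+B$ with $B(X,Y)=\langle\mathbb{I}(X,Y),(\bar\nabla\beta)^\perp\rangle$ and applying Cauchy-Schwarz to the $B$-vs-$\mathbb{I}$ pairing extracts the sharp lower bound $\sum_i|\mathbb{I}(e_i,\nabla\beta)|_+^2\ge Q\coth^2(\beta)(Q-1)$, which when fed back into Bochner yields $Q\le p$, with equality forcing $\mathrm{Hess}^M\beta|_y=0$. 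When the supremum of $Q$ is not attained, this pointwise argument is upgraded via the Omori-Yau maximum principle, applicable thanks to completeness of $M$ and the Ricci lower bound in \eqref{eq:ricci bound}.

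The \textbf{main obstacle} is the delicate algebraic bookkeeping at the maximum point: extracting the sharp constant $\sqrt{p}$ requires combining the traceless decomposition of $\mathrm{Hess}^M F$ (from $H=0$) with a Cauchy-Schwarz estimate on the $\mathbb{I}$-$w$ contraction, and these fine relations must survive the Omori-Yau approximation in order to conclude both the uniform bound $|\nabla^M\beta|\le\sqrt{p}$ and the pointwise Hessian vanishing in the equality case.
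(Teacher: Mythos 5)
Your proposal shares the same algebraic starting point as the paper for the lower bound (the ambient gradient has pseudo-norm one, and the normal bundle is negative definite, so its tangential projection has Riemannian norm $\ge 1$), and the same overall flavor of Cauchy--Schwarz-at-a-maximum plus Bochner for the upper bound and the Hessian vanishing. But the mechanism used to guarantee that the relevant maximum is \emph{attained} is genuinely different, and this is where the substance of the argument lies.

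For the upper bound, the paper does \emph{not} work on a fixed pair $(M,\beta)$ and does \emph{not} invoke Omori--Yau. It maximizes $F(x,u,z)=(\dd\beta_z)_x(u)$ jointly over $T^1M\times(M\cup\partial M)$ \emph{and} over all pointed complete maximal $p$-submanifolds, exploiting the cocompactness of $\mathrm{SO}(p,q+1)$ on $\mathcal{BM}\!\mathit{ax}_{p,q}$ (Proposition~\ref{prop:compactness maximal}, resting on \cite{SST23}*{Theorem~5.3}). This delivers an honest attained maximum without any a priori boundedness hypothesis, after which the first- and second-order critical-point computations (Lemmas~\ref{lem:critical point} and~\ref{lem:maximum principle}) produce the clean inequality $L^2-p-\tfrac{\langle z,z\rangle}{\langle u,z\rangle^2}L^2(2L^2-p-1)\le 0$, whose dependence on $\langle z,z\rangle$ moreover shows that the bound is \emph{strict} for distance functions ($z\in M$) and can only be saturated for Busemann functions ($z\in\partial M$). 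Your Omori--Yau fallback has two gaps that you flag but do not close: (i) the sequential maximum principle requires the function $Q=|\nabla\beta|^2$ to be bounded above a priori (or requires a compactifying reparametrization you don't supply), and that boundedness is precisely what is being proved; (ii) in the distance case $\nabla\beta$ and $\coth\beta$ are singular at the basepoint $o$, so your differential inequality degenerates near $o$ and the ``interior maximum'' may degenerate toward the puncture. The paper's compactness-of-moduli device sidesteps both.

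For the rigidity when $\|\nabla\beta\|\equiv 1$, the paper's argument (Proposition~\ref{pro:busemann}) is short and purely algebraic: the equality forces the ambient gradient to lie in $T_xM$, and from $\mathrm{grad}\,\beta|_x\propto z+\langle x,z\rangle\,x$ one deduces that $z$ (hence $\partial M$, resp.\ every $o\in M$) lies in the $(p+1)$-dimensional span $\mathrm{Span}(x)+T_xM$, forcing $M$ to coincide with the totally geodesic $\mathbb{H}^p$ carved out by that span. Your route via Bochner, Codazzi, and the claimed ODE $\nabla^\perp_{\nabla\beta}\mathbb{I}=-\coth\beta\cdot\mathbb{I}$ along gradient lines is much heavier, and the derivation of that ODE (and its integration through the singular locus of $\coth\beta$) is not justified, so as written the argument does not close. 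Finally, for item (2), the paper's remark first deduces from Lemma~\ref{lem:maximum principle} that $\sqrt p$ is only achievable for Busemann functions, and then runs Bochner at the maximum; you do not isolate this dichotomy, and it matters for the sharpness of the conclusion.
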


Comments:
\begin{itemize}
\item{Compared with the Riemanniann setting, where distance functions have gradient of norm $\le 1$ when restricted to submanifolds, for the {\em pseudo-Riemannian Busemann function} there is no such bound (see also Proposition~\ref{pro:busemann}). What we recover is a uniform bound for the restriction to the {\em maximal} $p$-submanifold.}
\item{This result generalizes the work of Labourie and Toulisse \cite{LT22}*{Proposition~5.8} for $p=2$ to any dimension $p\ge 2$.}
\item{Concerning the equality case, one should remark that Labourie and Toulisse \cite{LT22} show that, when $p = 2$ and $\norm{\nabla\beta}_y = \sqrt{2}$, then the scalar curvature at $y$ equals zero. From this, generalizing Ishihara \cite{I} and Cheng \cite{C94}, the authors prove that the scalar curvature must vanish identically and, therefore, the surface must be a Barbot surface. One may ask to what extent this phenomenon generalizes to higher dimension. In this direction, upcoming work of Moriani and Trebeschi \cite{MT} establishes that, when the scalar curvature of $M$ vanishes at some point, then $M$ has to be a maximal Barbot $p$-submanifold. In particular, when $q=1$, their work implies that Case (2) of Theorem~\ref{thm:main3} arises only when $M$ is a Barbot submanifold of $\mb{H}^{p,1}$ which does not occur for $\mb{H}^{p,1}$-convex cocompact representations.}
\end{itemize}

\subsection*{Acknowledgements}
We thank Andrea Seppi and Jérémy Toulisse for their support and encouragement to write this paper. We are also grateful to Francesco Bonsante, Sara Maloni, Alex Moriani, Beatrice Pozzetti, Enrico Trebeschi, and Anna Wienhard for their feedback and suggestions on a first draft of this work. Lastly, we would like to thank Timoth\'e Lemistre for commuticating to us his strategy of proof of Lemma~\ref{lem:embedding}, and to Andrea Seppi for pointing out to us an issue in the proof of a previous version of Lemma~\ref{lem:intorno nel dominio}.

The first author acknowledges support from the European Research Council (ERC) under the European Union’s Horizon 2020 research and innovation programme (grant agreement No 101018839).

\section{Spatial distances}

Let $M$ be a complete, spacelike, maximal $p$-submanifold of $\mb{H}^{p,q}$, as considered in \cite{SST23}. In this section we show that there is an explicit bi-Lipschitz equivalence between the intrinsic Riemannian distance and the pseudo-metric on $M$ induced by $\mb{H}^{p,q}$ (as described in Theorem~\ref{thm:distance comparison}). In order to do so, we study the gradient of the pseudo-Riemannian distance and Busemann functions on $M$ and establish uniform bounds for their $L^\infty$-norms (compare with Theorem~\ref{thm:main3}), defined as
\[
\norm{\nabla \beta}_{M, \infty} : = \sup_{x \in M} \sup_{v \in T^1_x M} \langle \nabla \beta|_x, v \rangle ,
\]
where $\nabla \beta$ denotes the gradient of $\beta : M \to \mathbb{R}$ with respect to the induced metric of $M$.

\subsection{Ricci curvature of spacelike submanifolds}
Let us start by recalling the various notions of curvature tensors for pseudo-Riemannian manifolds. Let $N$ be a smooth manifold endowed with a non-degenerate metric tensor $\langle \bullet, \bullet \rangle$ and let $D$ denote its Levi-Civita connection, namely the unique connection on $T N$ that is compatible with $\langle \bullet, \bullet \rangle$ and that satisfies $[X,Y] = D_X Y - D_Y X$ for any tangent vector fields $X, Y \in \Gamma(T N)$. The Riemann tensors of $D$ of type $(3,1)$ and $(0,4)$ are given by
\begin{gather*}
	R^D(U,V)W := D_V D_U W - D_U D_V W - D_{[V,U]} W , \\
	R^D(U,V,W,Z) := \langle R^D(U,V)W , Z \rangle ,
\end{gather*}
respectively. The \emph{Ricci curvature tensor} $\mathrm{Ric}_N$ is obtained from $R^D$ by taking the trace in its first and third entries. In other words,
\[
\mathrm{Ric}_N(U,V) = \tr_N(R^D(\bullet, U, \bullet, V)) =  \sum_{i = 1}^{\dim N} R^D(e_i,U,e_i,V) ,
\]
where $(e_i)_i$ denotes a local orthonormal frame of $(N,\langle \bullet, \bullet \rangle)$. 

If $(N,\langle \bullet, \bullet \rangle) = (\mathbb{H}^{p,q}, \langle \bullet , \bullet \rangle)$, we have
\begin{equation}\label{eq:riemann hpq}
	R^D(U,V)W = \langle V, W \rangle \, U - \langle U, W \rangle \, V, \qquad R_N(U,V) = - (p + q - 1) \, \langle U, V \rangle .
\end{equation}

If $M$ is a submanifold of $(N,\langle \bullet, \bullet \rangle)$, then its \emph{first fundamental form} is given by the restriction of the metric $\langle \bullet, \bullet \rangle$ on its tangent bundle, and we denote it by $g_M : = \langle \bullet, \bullet \rangle |_{M}$. We say that $(M, g_M)$ is \emph{spacelike} if $g_M$ is a positive definite Riemannian metric. If the first fundamental form of a spacelike submanifold $M$ is geodesically (or metric) complete, we will simply say that $M$ is \emph{complete}. We will be mostly interested in the case of $N = \mathbb{H}^{p,q}$ and when $M$ is a $p$-dimensional spacelike complete submanifold of $\mathbb{H}^{p,q}$.

Recall that the Levi-Civita connection $\nabla$ of $g_M$ coincides with the orthogonal projection of the Levi-Civita connection $D$ on $N$ onto $T M$. The projection of $D$ onto the normal bundle $(T M)^\perp$ defines the \emph{second fundamental form} $\mathbb{I}$ of $M$. In other words, we have
\[
D_U V = \nabla_U V + \mathbb{I}(U,V) ,
\]
for any $U, V \in \Gamma(T M)$. By construction, $\mathbb{I} \in \Gamma((T^* M)^{\otimes 2} \otimes (TM)^\perp)$. Given any normal vector field $n \in \Gamma((TM)^\perp)$, the \emph{shape operator $B_n$} associated with $n$ is defined as the unique endomorphism of $T M$ that satisfies
\[
\langle B_n(U) , V \rangle = \langle \mathbb{I}(U,V) , n \rangle, \qquad U, V \in \Gamma(T M).
\]

The Riemann tensors $R^D$, $R^\nabla$, and the second fundamental form $\mathbb{I}$ are related by the \emph{Gau{\ss}} and \emph{Codazzi equations}
\begin{equation}\label{eq:Gauss-Codazzi}
	\begin{cases}
		R^\nabla(U,V,W,Z) = R^D(U,V,W,Z) + \langle \mathbb{I}(U,W), \mathbb{I}(V,Z) \rangle - \langle \mathbb{I}(U,Z), \mathbb{I}(V,W) \rangle , \\
		(D_U \mathbb{I})(V,W) - (D_V \mathbb{I})(U,W) = R^\nabla(U,V)W - R^D(U,V)W ,
	\end{cases}
\end{equation}
where $U$, $V$, $W$, $Z$ are tangent vector fields to $M$. By taking the trace of the first identity with respect to the first fundamental form of $M$, we deduce that
\[
\mathrm{Ric}_M(U,V) = \tr_M(R^D(\bullet,U,\bullet,V)) + \langle \tr_M \mathbb{I} , \mathbb{I}(U,V) \rangle - \tr_M(\langle \mathbb{I}(\bullet,U) , \mathbb{I}(\bullet,V) \rangle) .
\]
The section $H : = \tfrac{1}{\dim M} \tr_M \mathbb{I}$ is called the \emph{mean curvature vector field} of $M \subset N$. A submanifold (of a Riemannian manifold) with vanishing mean curvature vector field is called \emph{minimal}. 

In the case of $N = \mathbb{H}^{p,q}$ and $M$ a spacelike $p$-submanifold of $\mathbb{H}^{p,q}$ (which will be the case of interest in this paper), the Ricci tensor of $M$ satisfies
\begin{equation}\label{eq:traccia gauss}
	\mathrm{Ric}_M(U,V) = - (p - 1)\, g_M(U,V) + \langle \tr_M \mathbb{I} , \mathbb{I}(U,V) \rangle - \tr_M(\langle \mathbb{I}(\bullet,U) , \mathbb{I}(\bullet,V) \rangle) .
\end{equation}

In this context, spacelike $p$-submanifolds with vanishing mean curvature vector field are usually referred as \emph{maximal}. Notice that, under these hypotheses, the metric $\langle \bullet, \bullet \rangle$ is negative definite on $(T M)^\perp$. In particular, the last term in the right hand-side of \eqref{eq:traccia gauss} is always non-negative and therefore Property \eqref{eq:ricci bound} follows.

\subsection{Fermi charts}

For technical reasons, it is convenient to work on the double cover 
\[
\widehat{\mathbb{H}}^{p,q} \cup \partial \widehat{\mathbb{H}}^{p,q} \longrightarrow \mathbb{H}^{p,q} \cup \partial \mathbb{H}^{p,q} ,
\]
given by
\begin{align*}
	\widehat{\mathbb{H}}^{p,q} & : = \{ x \in \mathbb{R}^{p,q+1} \mid \langle x, x \rangle_{p,q+1} = -1\} , \\
	\partial \widehat{\mathbb{H}}^{p,q} & : = \{z \in  \mathbb{R}^{p,q+1} - \{0\} \mid \langle z, z \rangle_{p,q+1} = 0\}/(z \sim  \lambda \, z , \lambda \in \mathbb{R}^+)
\end{align*}
with its natural projection. The main point is that $\widehat{\mathbb{H}}^{p,q}$ admits the following natural model: Consider a splitting $\mb{R}^{p,q+1}=U\oplus V$ where $U$ is $p$-subspace where the form is positive definite and $V=U^\perp$. Denote by $\mb{D}^p\subset U$ the unit disk and by $\mb{S}^q\subset V$ the $(-1)$-sphere. Then we have a diffeomorphism
\[
\begin{matrix}
	\mathbb{D}^p \times \mathbb{S}^q & \longrightarrow & \widehat{\mathbb{H}}^{p,q} \\
	(x,y) & \longmapsto & \frac{2}{1 - \norm{x}^2} \, x + \frac{1 + \norm{x}^2}{1 - \norm{x}^2} \, y ,
\end{matrix}
\]
which continuously extends to the ideal boundary $\partial \widehat{\mathbb{H}}^{p,q}$ as
\[
\begin{matrix}
	\partial \mathbb{D}^p \times \mathbb{S}^q & \longrightarrow & \partial \widehat{\mathbb{H}}^{p,q} \\
	(x,y) & \longmapsto & [x + y] .
\end{matrix}
\]

In these coordinates, every complete spacelike $p$-submanifold $\widehat{M} \subset \widehat{\mathbb{H}}^{p,q}$ can be expressed as the graph of some function $u : \mathbb{D}^p \to \mathbb{S}^q$ that is strictly $1$-Lipschitz with respect to the hemispherical metric of $\mathbb{D}^{p} \subset \mathbb{S}^p$ and the spherical metric of $\mathbb{S}^q$. Furthermore, the ideal boundary $\partial \widehat{M} \subset \partial \mathbb{H}^{p,q} \cong \partial \mathbb{D}^p \times \mathbb{S}^q$ is the graph of the continuous extension of $u$ to $\partial \mathbb{D}^p$. For details, see \cite{SST23}*{\S~3.2}.

We also recall that the projection $\pi:\widehat{\mb{H}}^{p,q}\to\mb{D}^p$ satisfies 
\begin{equation}
\label{eq:differential}
g_{\mb{H}^p}(\dd\pi_x(v),\dd\pi_x(v))\ge\langle v,v\rangle
\end{equation}
for every $x\in\widehat{\mb{H}}^{p,q}$ and $v\in T_x\widehat{\mb{H}}^{p,q}$. Using this fact one shows the following standard estimate on the Riemannian distance on $\widehat{M}$. 

\begin{lem}
\label{lem:distance}
We have $d_{\widehat{M}}(x,y)\le {\rm arccosh}(-\langle x,y\rangle)=:d_{\mb{H}^{p,q}}(x,y)$.
\end{lem}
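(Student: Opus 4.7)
The plan is to exploit the freedom in the choice of the splitting $\mathbb{R}^{p,q+1} = U \oplus V$. A direct application of \eqref{eq:differential} to tangent vectors of $\widehat{M}$ shows that for any curve $\gamma$ in $\widehat{M}$ one has $L_{\widehat{M}}(\gamma) \le L_{\mathbb{H}^p}(\pi \circ \gamma)$, and hence $d_{\widehat{M}}(x,y) \le d_{\mathbb{H}^p}(\pi(x),\pi(y))$. Unfortunately, for a generic splitting this right-hand side is strictly larger than $\operatorname{arccosh}(-\langle x,y\rangle)$, so I would adapt the splitting to the pair $(x,y)$ in such a way that both points lie on a common totally geodesic copy of $\mathbb{H}^p$ inside $\widehat{\mathbb{H}}^{p,q}$, on which the two distances coincide.

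Concretely, assuming that $x$ and $y$ are space-related (implicit in the right-hand side being defined), the 2-plane $\operatorname{span}(x,y) \subset \mathbb{R}^{p,q+1}$ has signature $(1,1)$. I pick a timelike unit vector $y_0 \in \operatorname{span}(x,y)$, extend it to a negative definite $(q+1)$-subspace $V \ni y_0$, and set $U := V^\perp$. In the product coordinates $\Phi : \mathbb{D}^p \times \mathbb{S}^q \to \widehat{\mathbb{H}}^{p,q}$ associated to this splitting, both $x$ and $y$ lie on the slice $\Phi(\mathbb{D}^p \times \{y_0\}) = (U \oplus \mathbb{R} y_0) \cap \widehat{\mathbb{H}}^{p,q}$, a totally geodesic copy of real hyperbolic space whose induced metric is $g_{\mathbb{H}^p}$. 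Since the spacelike geodesic of $\widehat{\mathbb{H}}^{p,q}$ joining $x$ and $y$ lies in $\operatorname{span}(x,y) \subset U \oplus \mathbb{R} y_0$, it is entirely contained in this slice, and its length gives simultaneously $d_{\mathbb{H}^p}(\pi(x),\pi(y)) = \operatorname{arccosh}(-\langle x,y\rangle) = d_{\mathbb{H}^{p,q}}(x,y)$.

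In these coordinates, $\widehat{M}$ is the graph of some function $u : \mathbb{D}^p \to \mathbb{S}^q$ (by the discussion just above the lemma), and necessarily $u(\pi(x)) = u(\pi(y)) = y_0$. Let $\alpha : [0, T] \to \mathbb{D}^p$ be the hyperbolic geodesic from $\pi(x)$ to $\pi(y)$, and set $\widetilde{\alpha}(t) := \Phi(\alpha(t), u(\alpha(t))) \in \widehat{M}$, a curve joining $x$ to $y$. Applying \eqref{eq:differential} to $v = \widetilde{\alpha}'(t)$ and using $d\pi(\widetilde{\alpha}') = \alpha'$, one obtains $\langle \widetilde{\alpha}', \widetilde{\alpha}'\rangle \le g_{\mathbb{H}^p}(\alpha', \alpha')$ pointwise. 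Integrating and combining with the distance identification of the previous paragraph yields
\[
d_{\widehat{M}}(x,y) \le L_{\widehat{M}}(\widetilde{\alpha}) \le L_{\mathbb{H}^p}(\alpha) = d_{\mathbb{H}^p}(\pi(x), \pi(y)) = d_{\mathbb{H}^{p,q}}(x, y) .
\]

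The main conceptual step is realizing that the naive projection bound is too weak for a generic splitting, and that the splitting can (and should) be adapted to the pair $(x,y)$ so that both points lie on a common totally geodesic $\mathbb{H}^p$ slice; once this observation is made, the proof reduces to a routine application of \eqref{eq:differential} together with the graph description of $\widehat{M}$.
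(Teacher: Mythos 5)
Your approach is essentially the same as the paper's: adapt the Fermi splitting to the pair $(x,y)$ so that the spacelike $\widehat{\mathbb{H}}^{p,q}$-geodesic joining them lies in a totally geodesic $\mathbb{H}^p$-slice, then lift the projected hyperbolic geodesic to $\widehat{M}$ via the graph map and compare lengths using \eqref{eq:differential}. (The paper phrases the choice dually: it picks the positive $p$-subspace $U \subset x^\perp$ to contain the initial velocity $v$, whence $V = U^\perp \ni x$ and your $y_0$ may as well be $x$.) One small gap to close: merely extending $y_0$ to a negative-definite $(q+1)$-subspace $V$ does not guarantee that the spacelike direction $v_0 \in \operatorname{span}(x,y)\cap y_0^\perp$ lies in $U = V^\perp$, which you need in order to have $\operatorname{span}(x,y) \subset U \oplus \mathbb{R} y_0$; you should additionally require $V \subset v_0^\perp$, or equivalently choose $U$ first so that it contains $v_0$, as the paper does.
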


\begin{proof}
Let $U$ be a positive $p$-subspace in $T_x\widehat{\mb{H}}^{p,q}=x^\perp$ containing the initial velocity $v$ of the spacelike geodesic in $\widehat{\mb{H}}^{p,q}$ joining $x$ to $y$. We work in Fermi charts $\mb{D}^p\times\mb{S}^q\to\widehat{\mb{H}}^{p,q}$ adapted to $U$ as described above. In these coordinates $\widehat{M}$ can be written as the graph of a smooth function $u:\mb{D}^p\to\mb{S}^q$ which is 1-Lipschitz with respect to the (hemi)spherical metrics on source and target. Furthermore, a direct computation shows that $\pi(x)=0$ and $\pi(y)=\tanh(t)v$ where $y=\cosh(t)x+\sinh(t)v$ and $\cosh(t)=-\langle x,y\rangle$. Let $\gamma:I\to \mb{D}^p$ be the hyperbolic geodesic joining $\pi(x)$ to $\pi(y)$. By the formula for the hyperbolic metric on $\mb{D}^p$ and the expressions for $\pi(x)$ and $\pi(y)$ we compute
\[
d_{\mb{H}^p}(\pi(x),\pi(y))={\rm arccosh}(-\langle x,y\rangle).
\]
Consider the path $\alpha=(\gamma,u\gamma)$ on $\widehat{M}$. By our choices $\alpha$ joins $x$ to $y$ and, hence, has length greater than $d_{\widehat{M}}(x,y)$. On the other hand, by equation \eqref{eq:differential}, we have
\[
d_{\mb{H}^p}(\pi(x),\pi(y))=\ell_{\mb{H}^p}(\gamma)=\ell_{\mb{H}^p}(\pi\alpha)\ge\ell_{\mb{H}^{p,q}}(\alpha)
\]
which finishes the proof.
\end{proof}

\subsection{Distance and Busemann functions}
We briefly recall the notions of pseudo-Riemannian Busemann and distance functions in $\widehat{\mathbb{H}}^{p,q}$ and $\mathbb{H}^{p,q}$.

\begin{dfn}[Busemann Function]
	Let $\theta \in \widehat{\mathbb{H}}^{p,q}$ and $o \in \{x \in \widehat{\mathbb{H}}^{p,q} \mid \langle x, \theta \rangle < 0 \}$. The pseudo-Riemannian \emph{Busemann function} (in $\widehat{\mathbb{H}}^{p,q}$) associated with $\theta$ and based at $o$ is given by
	\[
	b_{\theta, o} : \{x \in \widehat{\mathbb{H}}^{p,q} \mid \langle x, \theta \rangle < 0 \} \to \mathbb{R}, \qquad b_{\theta, o}(x) : = \log\frac{\langle x, \theta \rangle}{\langle o, \theta\rangle} .
	\]
	(Here we identify, with abuse, the point $\theta$ with one of its representatives in $\mathbb{R}^{p,q+1}$. Notice that the definition of $b_{\theta, o}$ is independent of this choice.) 
	
	Similarly, for any $\theta' \in \partial \mathbb{H}^{p,q}$ and $o'  \in \mathbb{H}^{p,q} - \mathbb{P}((\theta')^\perp)$, the pseudo-Riemannian \emph{Busemann function} (in $\mathbb{H}^{p,q}$) associated with $\theta'$ and based at $o'$ is given by
	\[
	b_{\theta',o'} : \mathbb{H}^{p,q} - \mathbb{P}((\theta')^\perp) \to \mathbb{R}, \qquad  b_{\theta',o'}(x') : = b_{\theta, o}(x) ,
	\]
	where $o, x \in \widehat{\mathbb{H}}^{p,q}, \theta \in \partial\widehat{\mathbb{H}}^{p,q}$ are representatives of $o', x' \in \mathbb{H}^{p,q}, \theta' \in \partial \mathbb{H}^{p,q}$ satisfying $\langle o ,\theta \rangle < 0$ and $\langle x ,\theta \rangle < 0$. 
\end{dfn}

\begin{dfn}[Distance Function]
	Let $o \in \widehat{\mathbb{H}}^{p,q}$. The pseudo-Riemannian \emph{distance function} (in $\widehat{\mathbb{H}}^{p,q}$) from $o$ is given by
	\[
	d_{\widehat{\mathbb{H}}^{p,q}}(o,\bullet) : \{ x \in \widehat{\mathbb{H}}^{p,q} \mid \langle o, x \rangle < -1 \} \to \mathbb{R}, \qquad  d_{\mathbb{H}^{p,q}}(o,x) : = \mathrm{arccosh}(- \langle o, x \rangle) .
	\]
	
	Similarly, for any $o' \in \mathbb{H}^{p,q}$, the pseudo-Riemannian \emph{distance function} (in $\mathbb{H}^{p,q}$) from $o'$ is given by
	\[
	d_{\mathbb{H}^{p,q}}(o',\bullet) : \{ x' \in \mathbb{H}^{p,q} \mid \abs{\langle o', x' \rangle} > 1 \} \to \mathbb{R}, \qquad d_{\mathbb{H}^{p,q}}(o',x') : =  d_{\widehat{\mathbb{H}}^{p,q}}(o,x) ,
	\]
	where $o, x \in \widehat{\mathbb{H}}^{p,q}$ are representatives of $o', x' \in \mathbb{H}^{p,q}$ satisfying $\langle o ,x \rangle < - 1$.
\end{dfn}

For future convenience, we describe the Hessian and the Laplacian of the restrictions of Busemann functions on maximal submanifolds.

\begin{pro}\label{prop:hessian of busemann}
	Let $M$ be a spacelike $p$-submanifold of $\widehat{\mathbb{H}}^{p,q}$ and let $\theta \in \partial \widehat{\mathbb{H}}^{p,q}$ be such that $\langle \bullet ,\theta \rangle < 0$ on $M$. Then the Hessian $\nabla^2 b_{\theta,o}$ with respect to the Levi-Connection of the induced metric $g_M$ of $M$ satisfies
	\[
	\nabla^2 b_{\theta, o} = g_M - \dd{ b_{\theta,o}} \otimes \dd{ b_{\theta,o}} + \dd b_{\theta, o} (\mathbb{I}(\bullet, \bullet)) ,
	\]
	In particular, when $M$ is a spacelike maximal $p$-submanifold, $b_{\theta, o}|_M$ satisfies
	\[
	\Delta b_{\theta, o} = p - \norm{\nabla b_{\theta,o}}^2 ,
	\]
	where $\nabla b_{\theta, o}$ denotes the gradient of $b_{\theta, o}|_M$ with respect to $g_M$ and $\Delta u := \tr_M (\nabla^2 u)$.
\end{pro}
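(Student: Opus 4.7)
The idea is to compute the Hessian of $b = b_{\theta,o}$ in three steps: first in flat $\mathbb{R}^{p,q+1}$, then in the ambient $\widehat{\mathbb{H}}^{p,q}$ via the usual formula comparing the Levi-Civita connection of the hyperboloid to the flat connection, and finally on $M$ via the Gauss formula. Throughout, I would use that $b$ differs by a constant from $f(x) := \log(-\langle x,\theta\rangle)$, so Hessians of $b$ and $f$ coincide.

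\textbf{Flat Hessian.} Let $\widetilde{D}$ be the flat connection on $\mathbb{R}^{p,q+1}$. A direct computation on constant vector fields $V,W$ yields
\[
df|_x(V) = \frac{\langle V,\theta\rangle}{\langle x,\theta\rangle}, \qquad \widetilde{D}^2 f(V,W) = -\frac{\langle V,\theta\rangle\,\langle W,\theta\rangle}{\langle x,\theta\rangle^2} = -df(V)\,df(W).
\]

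\textbf{Hessian on $\widehat{\mathbb{H}}^{p,q}$.} Since the position vector $x$ is a unit timelike normal to $\widehat{\mathbb{H}}^{p,q}$, the standard computation gives $\widetilde{D}_V W = \bar\nabla_V W + \langle V,W\rangle\, x$ for tangent fields $V,W$. Therefore, writing $\bar\nabla^2 f(V,W) = V(Wf) - (\bar\nabla_V W)f$ and substituting, one obtains
\[
\bar\nabla^2 f(V,W) = \widetilde{D}^2 f(V,W) + \langle V,W\rangle\, df|_x(x) = -df\otimes df(V,W) + \langle V,W\rangle,
\]
using the key identity $df|_x(x) = \langle x,\theta\rangle/\langle x,\theta\rangle = 1$. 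So $\bar\nabla^2 b = g_{\widehat{\mathbb{H}}^{p,q}} - db\otimes db$ on $\widehat{\mathbb{H}}^{p,q}$.

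\textbf{Restriction to $M$.} For $V,W$ tangent to $M$, the Gauss formula $\bar\nabla_V W = \nabla_V W + \mathbb{I}(V,W)$ combined with $\nabla^2 b(V,W) = V(Wb) - (\nabla_V W)b$ gives
\[
\nabla^2 b(V,W) = \bar\nabla^2 b(V,W) + db\bigl(\mathbb{I}(V,W)\bigr).
\]
Since $M$ is spacelike, $g_{\widehat{\mathbb{H}}^{p,q}}|_{TM\times TM} = g_M$, yielding the claimed identity
\[
\nabla^2 b_{\theta,o} = g_M - db_{\theta,o}\otimes db_{\theta,o} + db_{\theta,o}\bigl(\mathbb{I}(\bullet,\bullet)\bigr).
\]

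\textbf{Laplacian when $M$ is maximal.} Taking $\tr_M$ of the three terms in an orthonormal frame $(e_i)_i$ of $M$: $\tr_M(g_M) = p$, $\tr_M(db\otimes db) = \sum_i db(e_i)^2 = \|\nabla b\|^2$, and $\tr_M\bigl(db(\mathbb{I}(\bullet,\bullet))\bigr) = db(\tr_M \mathbb{I}) = p\, db(H) = 0$ by maximality. This gives $\Delta b_{\theta,o} = p - \|\nabla b_{\theta,o}\|^2$.

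There is no real obstacle here; the only point one has to be careful with is the sign conventions in the comparison $\widetilde{D}_V W = \bar\nabla_V W + \langle V,W\rangle x$ (which relies on $\langle x,x\rangle = -1$) and the fact that $df|_x(x)=1$ is what produces the $g_M$ term and ultimately the constant $p$ in the Laplacian.
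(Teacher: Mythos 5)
Your proof is correct and follows essentially the same route as the paper: both compute the Hessian by peeling off the flat connection $D^0$, the ambient connection $D$ on $\widehat{\mathbb{H}}^{p,q}$, and the intrinsic connection $\nabla$ on $M$ via the Gauss formulas $D^0_V W = D_V W + \langle V,W\rangle\,x$ and $D_V W = \nabla_V W + \mathbb{I}(V,W)$, and then take the trace. The paper merges your first two steps into a single direct computation of the ambient Hessian, whereas you separate the flat and spherical-to-hyperbolic steps; this is a purely organizational difference and the key identities (in particular $db|_x(x)=1$, which produces the $g_M$ term) are identical.
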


\begin{proof}
	Let $D^0$, $D$, and $\nabla$ be the Levi-Civita connections of $\mathbb{R}^{p,q+1}$, $\widehat{\mathbb{H}}^{p,q}$, and $M$, respectively. Given any two tangent vector fields $V,W$ on $M$, their covariant derivatives at $y \in M$ are related by the identities:
	\begin{align}\label{eq:identities connections}
		\begin{split}
			D^0_V W|_y & = D_V W|_y + \langle V, W \rangle|_y \, y \in T_y \widehat{\mathbb{H}}^{p,q} \oplus (T_y \widehat{\mathbb{H}}^{p,q})^\perp , \\
			D_V W|_y & = \nabla_V W|_y + \mathbb{I}(V,W)|_y \in T_y M \oplus ((T_y M)^\perp \cap T_y \widehat{\mathbb{H}}^{p,q}) .
		\end{split}
	\end{align}
	Let $\mathrm{Hess}$ and $\nabla^2$ denote the Hessians with respect to the connections $D$ and $\nabla$, respectively. Then we have:
	\begin{align*}
		(\nabla^2 b_{\theta,o})(V,W) & = (\nabla_V \dd b_{\theta,o})(W) \\
		& = V(W(b_{\theta,o})) - \dd{b_{\theta,o}}(\nabla_V W) \\
		& = (\mathrm{Hess}\, b_{\theta,o})(V,W) + \dd{b_{\theta,o}}(D_V W - \nabla_V W) \\
		& = (\mathrm{Hess}\, b_{\theta,o})(V,W) + \dd{b_{\theta,o}}(\mathbb{I}(V,W)) .
	\end{align*}
	We can express the term $(\mathrm{Hess}\, b_{\theta,o})(V,W)$ as follows:
	\begin{align*}
		(\mathrm{Hess}\, b_{\theta,o})_x(v,w) & = (D_v \dd b_{\theta, o})_x(w) \\
		& = V\left( \frac{\langle W , \theta \rangle}{\langle \mathrm{id} , \theta \rangle}\right)|_x - \frac{\langle D_v W|_x, \theta \rangle}{\langle x ,\theta \rangle} \\
		& = \frac{\langle D^0_v W , \theta \rangle}{\langle x, \theta \rangle} - \frac{\langle v , \theta \rangle \, \langle w , \theta \rangle}{\langle x, \theta \rangle^2} - \frac{\langle D_v W|_x, \theta \rangle}{\langle x ,\theta \rangle} \\
		& = \langle v, w \rangle - \frac{\langle v , \theta \rangle \, \langle w , \theta \rangle}{\langle x, \theta \rangle^2} ,
	\end{align*}
	where in the last step we applied \eqref{eq:identities connections}. Combining the relations we found, we get:
	\[
	(\nabla^2 b_{\theta, o})_x(v,w) = \langle v, w \rangle - \frac{\langle v , \theta \rangle \, \langle w , \theta \rangle}{\langle x, \theta \rangle^2} + \frac{\langle \mathbb{I}_x(v,w), \theta \rangle}{\langle x, \theta \rangle} ,
	\]
	which coincides with the desired identity. By taking the trace with respect to $g_M$ and recalling that $\tr_M \mathbb{I} = 0$ when $M$ is maximal, the second part of the statement follows.
\end{proof}

\subsection{A compactness criterion}
In the proof of Theorem~\ref{thm:main3} we will study the restrictions of pseudo-Riemannian distance and Busemann functions at the points where their gradients have maximal norm. In order to carry out this strategy we need to ensure the existence of those points. The compactness criterion that we describe here will take care of this issue. 

Following \cites{LTW,SST23}, we consider the space of \emph{pointed complete, spacelike, maximal $p$-submanifolds of $\widehat{\mathbb{H}}^{p,q}$}
\[
\mathcal{M}\!\mathit{ax}_{p,q} : = \left\{ (x,M) \left| 
\begin{array}{c}
	\text{$M$ complete, spacelike, maximal} \\
	\text{$p$-submanifold of $\widehat{\mathbb{H}}^{p,q}$, $x \in M$}
\end{array}
\right. \right\}
\]
and the disk bundle
\[
\mathcal{BM}\!\mathit{ax}_{p,q} : = \left\{ (x,M,z) \left| 
\begin{array}{c}
	\text{$M$ complete, spacelike, maximal} \\
	\text{$p$-submanifold of $\widehat{\mathbb{H}}^{p,q}$, $x \in M$ and $z \in M \cup \partial M$}
\end{array}
\right. \right\} .
\]
We endow $\mathcal{M}\!\mathit{ax}_{p,q}$ and $\mathcal{BM}\!\mathit{ax}_{p,q}$ with the restriction of the Hausdorff topology on pointed entire graphs inside $\widehat{\mathbb{H}}^{p,q}$ and with the subspace topology from the inclusion $\mathcal{BM}\!\mathit{ax}_{p,q} \subset \mathcal{M}\!\mathit{ax}_{p,q} \times (\widehat{\mathbb{H}}^{p,q} \cup \partial\widehat{\mathbb{H}}^{p,q})$, respectively. Then we prove:

\begin{pro}\label{prop:compactness maximal}
	The action of $\mathrm{SO}(p,q+1)$ on $\mathcal{BM}\!\mathit{ax}_{p,q}$ is cocompact.
\end{pro}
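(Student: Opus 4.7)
The plan is to use the action of $\mathrm{SO}(p,q+1)$ to normalize the pointed tangent $p$-plane $(x, T_x M)$ attached to a complete, spacelike, maximal submanifold, and then exploit the Fermi chart description to reduce compactness to a combination of Arzelà-Ascoli and interior regularity for the maximal equation. The main obstacle will be ensuring that the uniform limit of a sequence of maximal spacelike graphs is still a maximal spacelike graph.

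First I would observe that $\mathrm{SO}(p,q+1)$ acts transitively on the set of pairs $(x, V)$ with $x \in \widehat{\mathbb{H}}^{p,q}$ and $V \subset T_x \widehat{\mathbb{H}}^{p,q} = x^\perp$ a positive definite $p$-subspace, with stabilizer conjugate to the compact group $\mathrm{S}(\mathrm{O}(p) \times \mathrm{O}(q))$. Fixing such a reference pair $(x_0, V_0)$ once and for all, every $(x, M, z) \in \mathcal{BM}\!\mathit{ax}_{p,q}$ is $\mathrm{SO}(p,q+1)$-equivalent to some triple satisfying $x = x_0$ and $T_{x_0} M = V_0$. It therefore suffices to prove that the subset
\[
\mathcal{BM}\!\mathit{ax}_{p,q}^{0} := \left\{ (x_0, M, z) \in \mathcal{BM}\!\mathit{ax}_{p,q} \, \middle| \, T_{x_0} M = V_0 \right\}
\]
is compact. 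Choose a Fermi decomposition $\widehat{\mathbb{H}}^{p,q} \cong \mathbb{D}^p \times \mathbb{S}^q$ adapted to a positive $p$-plane $U \subset \mathbb{R}^{p,q+1}$ so that $x_0 \leftrightarrow (0, y_0)$ and $V_0$ corresponds to the horizontal tangent plane at $(0, y_0)$. By the graph description recalled before Lemma~\ref{lem:distance}, every admissible $M$ is the graph of a strictly $1$-Lipschitz map $u : \mathbb{D}^p \to \mathbb{S}^q$ with $u(0) = y_0$ and $du|_0 = 0$, and $M \cup \partial M$ coincides with the graph of its continuous extension $\bar{u} : \overline{\mathbb{D}}^p \to \mathbb{S}^q$. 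In particular, the third datum $z \in M \cup \partial M$ is parametrized by a point $t \in \overline{\mathbb{D}}^p$.

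Given a sequence $(x_0, M_n, z_n) \in \mathcal{BM}\!\mathit{ax}_{p,q}^{0}$ encoded by maps $\bar{u}_n$ and points $t_n$, the family $\{\bar{u}_n\}$ of $1$-Lipschitz maps between compact metric spaces is equicontinuous and uniformly bounded, so Arzelà-Ascoli yields a subsequence along which $\bar{u}_n \to \bar{u}_\infty$ uniformly on $\overline{\mathbb{D}}^p$ and $t_n \to t_\infty \in \overline{\mathbb{D}}^p$, with $\bar{u}_\infty$ still $1$-Lipschitz. The remaining and crucial step is to verify that $u_\infty := \bar{u}_\infty|_{\mathbb{D}^p}$ is smooth, \emph{strictly} $1$-Lipschitz, and defines a maximal graph. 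For this I would invoke the smooth compactness theory for maximal spacelike graphs developed in \cite{SST23}: the uniform bound $\norm{\mathbb{I}}_\infty \leq pq$ from \eqref{eq:ricci bound}, combined with elliptic regularity for the (quasilinear) maximal graph equation, upgrades uniform convergence to smooth convergence on compact subsets of $\mathbb{D}^p$, which forces $u_\infty$ to be strictly $1$-Lipschitz and maximal. The resulting limit $M_\infty$ is an entire spacelike maximal graph over $\mathbb{D}^p$, hence complete, passing through $x_0$ with $T_{x_0} M_\infty = V_0$. Therefore $(x_0, M_\infty, z_\infty) \in \mathcal{BM}\!\mathit{ax}_{p,q}^{0}$, proving the desired cocompactness.
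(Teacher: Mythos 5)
Your proof takes essentially the same route as the paper's: normalize the pointed tangent plane using the transitive action of $\mathrm{SO}(p,q+1)$ on the spacelike Grassmannian, encode submanifolds as $1$-Lipschitz graphs in Fermi coordinates, extract a subsequential Hausdorff limit via Arzelà--Ascoli, and appeal to the compactness machinery of \cite{SST23} to conclude the limit is a complete spacelike maximal submanifold. The paper packages this last step by citing the properness of $\tau : \mathcal{M}\!\mathit{ax}_{p,q} \to \mathrm{Gr}_p(\widehat{\mathbb{H}}^{p,q})$ from \cite{SST23}*{Theorem~5.3}, which is exactly the result your elliptic-regularity sketch is reproducing.
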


This is essentially a corollary of the following compactness criterion due to Seppi, Smith, and Toulisse \cite{SST23}.

\begin{thm}[{\cite{SST23}*{Theorem~5.3}}]\label{thm:compactness maximal}
	Let
	\[
	\mathrm{Gr}_p(\widehat{\mathbb{H}}^{p,q}) : = \{(x,H) \mid 
	\text{$x \in \widehat{\mathbb{H}}^{p,q}$, $H \subset T_x \widehat{\mathbb{H}}^{p,q}$ spacelike $p$-plane } \} .
	\]
	Then the map 
	\[
	\begin{matrix}
		\tau : & \mathcal{M}\!\mathit{ax}_{p,q} & \longrightarrow & \mathrm{Gr}_p(\widehat{\mathbb{H}}^{p,q}) \\
		& (x,M) & \longmapsto & (x,T_x M) ,
	\end{matrix}
	\]
	is proper.
\end{thm}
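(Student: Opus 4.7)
Properness of $\tau$ is equivalent to the following sequential statement: any sequence $(x_n, M_n) \in \mathcal{M}\!\mathit{ax}_{p,q}$ with $(x_n, T_{x_n} M_n) \to (x_\infty, H_\infty)$ in $\mathrm{Gr}_p(\widehat{\mathbb{H}}^{p,q})$ admits a subsequence converging in $\mathcal{M}\!\mathit{ax}_{p,q}$. The plan is to realize the $M_n$ as graphs in a common Fermi chart adapted to $(x_\infty, H_\infty)$, extract a $C^0_{\mathrm{loc}}$-limit via Arzelà--Ascoli from the uniform Lipschitz bound, and upgrade this to $C^\infty_{\mathrm{loc}}$ convergence via the quasilinear elliptic PDE expressing maximality, together with the uniform bound $\norm{\mathbb{I}}_\infty \le pq$ of Ishihara's property~(I).

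\textbf{Main steps.} Choose the splitting $\mathbb{R}^{p,q+1} = H_\infty \oplus H_\infty^\perp$, where $H_\infty$ is positive definite of dimension $p$ and $H_\infty^\perp$ is negative definite of dimension $q+1$ and contains $x_\infty$. The associated Fermi chart $\Phi : \mathbb{D}^p \times \mathbb{S}^q \to \widehat{\mathbb{H}}^{p,q}$ sends $(0, x_\infty)$ to $x_\infty$ and the horizontal $p$-plane at $(0, x_\infty)$ to $H_\infty$. For $n$ large, each $M_n$ is the graph of a unique strictly $1$-Lipschitz function $u_n : \mathbb{D}^p \to \mathbb{S}^q$, and the hypothesis $\tau(x_n, M_n) \to (x_\infty, H_\infty)$ translates to $\pi(x_n) \to 0$ and $u_n(\pi(x_n)) \to x_\infty$. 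The uniform Lipschitz bound and this normalization imply equicontinuity and pointwise boundedness, so Arzelà--Ascoli produces a subsequence $u_n \to u_\infty$ uniformly on compact sets of $\mathbb{D}^p$, with $u_\infty$ $1$-Lipschitz. In the graph coordinates, the maximal equation takes the quasilinear elliptic form $F(u, Du, D^2 u) = 0$, uniformly elliptic on regions where $u_n$ stays uniformly strictly $1$-Lipschitz; on such regions the bound $\norm{\mathbb{I}_n}_\infty \le pq$ from~(I) transfers to uniform $C^2$ control on $u_n$, and Schauder bootstrap yields uniform $C^{k,\alpha}_{\mathrm{loc}}$ estimates of all orders. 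After a diagonal extraction, $u_n \to u_\infty$ in $C^\infty_{\mathrm{loc}}$ on a neighborhood of every point where $u_\infty$ is strictly $1$-Lipschitz, so $u_\infty$ solves the maximal PDE there, and its graph $M_\infty$ has tangent plane $H_\infty$ at $x_\infty$.

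\textbf{Globalization and main obstacle.} Let $S \subseteq \mathbb{D}^p$ denote the set where $u_\infty$ is strictly $1$-Lipschitz. Then $S$ is non-empty (it contains $\pi(x_\infty) = 0$, since $H_\infty$ is spacelike) and open by continuity of $Du_\infty$. The principal technical hurdle is to show that $S$ is also closed in $\mathbb{D}^p$, hence equal to it: one must rule out interior lightlike degeneration of the limit graph. The expected mechanism combines the maximum principle for the quasilinear maximal PDE, whose ellipticity degenerates precisely on lightlike graphs, with the a priori second-fundamental-form bound of~(I), which would blow up along any genuine lightlike degeneration approached from the strictly spacelike side; alternatively, one can invoke a Cheeger--Gromov-type intrinsic compactness in the class of maximal spacelike submanifolds with uniformly bounded second fundamental form, which forces $C^\infty_{\mathrm{loc}}$-limits to remain spacelike. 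Once $S = \mathbb{D}^p$, the graph $M_\infty$ is a complete, spacelike, maximal $p$-submanifold containing $x_\infty$ with $T_{x_\infty} M_\infty = H_\infty$, and the $C^0_{\mathrm{loc}}$ convergence of $u_n$ translates to pointed Hausdorff convergence $M_n \to M_\infty$ in $\widehat{\mathbb{H}}^{p,q}$, so $(x_n, M_n) \to (x_\infty, M_\infty)$ in $\mathcal{M}\!\mathit{ax}_{p,q}$.
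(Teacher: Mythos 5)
The paper does not prove this statement: it is quoted verbatim as Theorem~5.3 of Seppi, Smith, and Toulisse~\cite{SST23} and then used as a black box (to deduce Proposition~\ref{prop:compactness maximal}). So there is no ``paper's own proof'' to compare against; your attempt must be judged on its own.

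Your overall scheme --- reduce properness to a sequential statement, realize each $M_n$ as the graph of a strictly $1$-Lipschitz $u_n:\mathbb{D}^p\to\mathbb{S}^q$ in a Fermi chart adapted to $(x_\infty,H_\infty)$, extract a $C^0_{\mathrm{loc}}$ limit $u_\infty$ by Arzelà--Ascoli, upgrade via the quasilinear maximal equation and Ishihara's bound $\norm{\mathbb{I}}_\infty\le pq$ --- has the right shape and matches the framework of~\cite{SST23}*{\S 3.2}. You also correctly isolate the crux: the a priori limit $u_\infty$ is only $1$-Lipschitz, and one must exclude interior lightlike degeneration to conclude that $M_\infty$ is a complete spacelike maximal submanifold.

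That crux is the entire content of the theorem, and your argument does not close it. You assert that the open set $S\subset\mathbb{D}^p$ where $u_\infty$ is strictly $1$-Lipschitz is non-empty and would equal $\mathbb{D}^p$ if closed, and then offer two candidate mechanisms (blowup of $\norm{\mathbb{I}}$ along a lightlike degeneration approached from the spacelike side, or an intrinsic Cheeger--Gromov compactness) without carrying out either. Neither is automatic. The Ishihara bound is expressed in the induced Riemannian metric on $M_n$, which degenerates relative to the background hemispherical metric on $\mathbb{D}^p$ exactly as $\abs{Du_n}\to 1$; converting the intrinsic bound on $\mathbb{I}$ into a quantitative barrier that prevents $\abs{Du_n}$ from reaching $1$ on a fixed neighborhood of $\pi(x_n)$ is a genuine estimate, not an invocation. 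There is also a mild circularity in how you get $0\in S$: you only learn $Du_\infty(0)$ from the hypothesis $T_{x_n}M_n\to H_\infty$ once you already have local $C^1$ convergence, which your bootstrap only delivers on regions already known to be \emph{uniformly} strictly spacelike. The correct order is to use the ambient Grassmannian convergence first to get $\abs{Du_n(\pi(x_n))}\le c<1$ uniformly, then use the $\mathbb{I}$ bound to propagate a uniform gradient bound on a fixed-size neighborhood of $\pi(x_n)$, and only then run Arzelà--Ascoli, bootstrap, and an open-closed continuation. Until that propagation step is made quantitative, the argument has a gap exactly where the theorem does its real work.
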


\begin{proof}[Proof of Proposition~\ref{prop:compactness maximal}]
	Let $((x_n,M_n,z_n))_n$ be a sequence in $\mathcal{BM}\!\mathit{ax}_{p,q}$. Choose $x \in \widehat{\mathbb{H}}^{p,q}$ a basepoint and $H \subset T_{x} \widehat{\mathbb{H}}^{p,q}$ a fixed spacelike $p$-plane at $x$. Then there exist elements $(g_n)_n$ in $\mathrm{SO}(p,q+1)$ such that $g_n \cdot x_n = x$ and $(\dd{g_n})_{x_n}(T_{x_n} M_n) = H$. By Theorem~\ref{thm:compactness maximal}, up to subsequence $((g_n \cdot x_n, g_n \cdot M_n))_n$ converges in the Hausdorff topology to some pointed, complete, spacelike, maximal $p$-submanifold $(x,M)$. 
	
	It remains to show that, up to subsequence, $(g_n \cdot z_n)_n$ converges to some point $z \in M \cup \partial M$. Notice that, if the sequence $(g_n \cdot z_n)_n$ stays in a compact subset of $\widehat{\mathbb{H}}^{p,q}$ then, by definition of Hausdorff convergence, every limit point of the sequence lies inside the Hausdorff limit of $(g_n \cdot M_n)_n$, which is equal to $M$. Therefore the only issue is given by sequences that leave every compact subset. We need to check that all of them subconverge to a point in $\partial M \subset \partial \widehat{\mathbb{H}}^{p,q}$.
	
	To see this, recall that every $g_n \cdot (M_n \cup \partial M_n)$ is the graph of some $1$-Lipschitz function $\mathbb{D}^p \cup \partial \mathbb{D}^p \to \mathbb{S}^q$ with respect to the (hemi)spherical metrics of source and target, and that $g_n \cdot x_n = x$. By Arzel\'a-Ascoli, the space of such graphs is compact in the Hausdorff topology and, hence, we conclude that $(g_n \cdot z_n)_n$ converges to some $z \in \partial M$.
\end{proof}

\subsection{Lower bound for the gradient}
Before moving to the proof of Theorem~\ref{thm:main3}, let us observe the following issue: In the Riemannian setting, the norm of the gradient of the restriction of a Busemann function to a submanifold is always $\le 1$. However, this is not true in $\mb{H}^{p,q}$. In fact we have the following:

\begin{pro}
\label{pro:busemann}

Let $M\subset\mb{H}^{p,q}$ be a complete, spacelike $p$-submanifold with boundary at infinity $\partial M \subset \partial\mb{H}^{p,q}$ and let $o \in M$ be a basepoint. Denote by $\nabla$ the Levi-Civita connection of $M$. If
\[
\sup_{\theta\in\Lambda} \norm{\nabla b_{\theta,o}}_{M, \infty}\le 1 \text{ or } \sup_{o \in M} \norm{\nabla (d_{\mathbb{H}^{p,q}}(o,\bullet))}_{M - \{o\}, \infty} \leq 1,
\]
then $M$ is a totally geodesic copy of $\mb{H}^p$.

\end{pro}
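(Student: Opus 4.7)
The plan is to show, in both cases, that the \emph{ambient} gradient of the Busemann (respectively distance) function in $\widehat{\mathbb{H}}^{p,q}$ has constant pseudo-norm $+1$, and then to combine this with the splitting of $T_M\widehat{\mathbb{H}}^{p,q}$ into the positive definite tangent bundle $TM$ and the negative definite normal bundle $NM$ (since $M$ is spacelike). A direct calculation using $\langle x,x\rangle = -1$ and $\langle\theta,\theta\rangle = 0$ will give
\[
\nabla^{\widehat{\mathbb{H}}^{p,q}} b_{\theta,o}\big|_x = \frac{\theta + \langle x,\theta\rangle x}{\langle x,\theta\rangle}, \qquad \langle \nabla^{\widehat{\mathbb{H}}^{p,q}} b_{\theta,o}, \nabla^{\widehat{\mathbb{H}}^{p,q}} b_{\theta,o} \rangle \equiv +1,
\]
and similarly for $\nabla^{\widehat{\mathbb{H}}^{p,q}} d_{\widehat{\mathbb{H}}^{p,q}}(o,\cdot)$. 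Decomposing the ambient gradient as $\nabla^{\widehat{\mathbb{H}}^{p,q}} \beta = \nabla\beta + n$ with $\nabla\beta \in TM$ and $n \in NM$, the identity $1 = \norm{\nabla\beta}_M^2 + \langle n,n\rangle$ combined with $\langle n,n\rangle \leq 0$ yields $\norm{\nabla\beta}_M \geq 1$, with equality iff $n = 0$. The hypothesis of the Proposition therefore forces $\nabla^{\widehat{\mathbb{H}}^{p,q}}\beta$ to be tangent to $M$ everywhere on $M$.

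Next I will invoke the classical ODE fact that a vector field tangent to a submanifold has integral curves contained in the submanifold. The integral curves of $-\nabla^{\widehat{\mathbb{H}}^{p,q}} b_{\theta,o}$ are the unit-speed spacelike ambient geodesics going to $\theta$, and those of $-\nabla^{\widehat{\mathbb{H}}^{p,q}} d_{\widehat{\mathbb{H}}^{p,q}}(o,\cdot)$ are the unit-speed ones pointing toward $o$. Hence every such ambient geodesic meeting $M$ remains in $M$. For any $C^2$ curve $\gamma \subset M$ that is also an ambient geodesic, the decomposition $0 = D^{\widehat{\mathbb{H}}^{p,q}}_{\dot\gamma}\dot\gamma = \nabla_{\dot\gamma}\dot\gamma + \mathbb{I}(\dot\gamma,\dot\gamma)$ into tangent and normal parts forces $\mathbb{I}(\dot\gamma,\dot\gamma) \equiv 0$ along $\gamma$.

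To conclude $\mathbb{I} \equiv 0$ I still need to verify it vanishes in enough directions at each $x \in M$. In the distance case, for $v \in T_oM \setminus \{0\}$ the intrinsic geodesic $\alpha_v(s) = \exp_o^M(sv) \subset M$ will satisfy $\langle o,\alpha_v(s)\rangle = -1 - \tfrac{1}{2}\norm{v}^2 s^2 + O(s^3) < -1$ for small $s > 0$ by a Taylor expansion, so $o$ and $\alpha_v(s)$ are space-related; the ambient geodesic from $o$ to $\alpha_v(s)$ therefore lies in $M$, its initial velocity $w_s \in T_oM$ tends to $v/\norm{v}$ as $s \to 0$, and continuity together with $\mathbb{I}(w_s,w_s) = 0$ yield $\mathbb{I}(v,v) = 0$. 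In the Busemann case, taking $\Lambda = \partial M$ and using that $M$ is the graph of a 1-Lipschitz function over $\mathbb{D}^p$ with ideal boundary $\partial M \cong S^{p-1}$ (\S 2.2), the map sending $\theta \in \partial M$ to the initial velocity at $x$ of the ambient geodesic from $x$ to $\theta$ is a continuous injection between $(p-1)$-spheres, hence surjective by invariance of domain. In either scenario $\mathbb{I}_x(v,v) = 0$ for every $v \in T_xM$, so $\mathbb{I} \equiv 0$; a complete, connected, spacelike, totally geodesic $p$-submanifold of $\mathbb{H}^{p,q}$ is the intersection with a signature-$(p,1)$ linear subspace of $\mathbb{R}^{p,q+1}$, i.e., a totally geodesic copy of $\mathbb{H}^p$.

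The main obstacle I anticipate is this last step, namely producing enough directions to force $\mathbb{I}$ to vanish identically. In the distance case this rests on the Taylor expansion guaranteeing space-relatedness, while in the Busemann case it requires both the topological identification $\partial M \cong S^{p-1}$ from the graph description of $M$ and an invariance-of-domain argument to get surjectivity onto the unit sphere of $T_xM$.
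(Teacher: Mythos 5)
Your proof is correct, and it follows a genuinely different route from the paper's. Both proofs begin identically: compute the ambient gradient, check $\norm{\mathrm{grad}\,\beta}^2 \equiv 1$, decompose $\mathrm{grad}\,\beta = \nabla\beta + w$ with $w$ in the (negative definite) normal bundle, and conclude from the hypothesis that $w = 0$, i.e.\ $\mathrm{grad}\,\beta$ is everywhere tangent to $M$. From this point the two arguments diverge. The paper runs a linear-algebra argument: tangency of $\mathrm{grad}\,b_{\theta,o}$ for every $\theta \in \partial M$ means $\partial M \subset (\mathrm{Span}(x)+T_xM)\cap\partial\widehat{\mathbb{H}}^{p,q}$; since $\partial M$ is an embedded $(p-1)$-sphere, $\mathrm{Span}(\partial M)$ has dimension exactly $p+1 = \dim(\mathrm{Span}(x)+T_xM)$, so this $(p+1)$-plane is independent of $x$; completeness finishes the job (and the distance case is handled symmetrically by showing $M \subset \mathrm{Span}(x)+T_xM$ directly). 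You instead run a dynamical argument: the integral curves of $-\mathrm{grad}\,\beta$ are ambient spacelike geodesics, tangency plus uniqueness of ODE solutions forces them to stay in $M$, and the decomposition of the geodesic equation gives $\mathbb{I}(\dot\gamma,\dot\gamma)=0$ along them; then you show the set of available initial directions fills out $T^1_xM$ (via a Taylor expansion and limiting argument in the distance case, via invariance of domain applied to $\partial M \cong S^{p-1}$ in the Busemann case) to conclude $\mathbb{I}\equiv 0$. Your approach invokes heavier tools (ODE uniqueness, invariance of domain) and is somewhat longer, but it has the pedagogical merit of making the geometric content explicit -- the geodesics of $\mathbb{H}^{p,q}$ aimed at $\partial M$ actually lie inside $M$ -- whereas the paper's argument reaches the conclusion by a shorter, purely algebraic dimension count on $\mathrm{Span}(\partial M)$. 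Both are valid; the paper's is the more economical of the two.

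One small presentational note: your displayed formula for the ambient gradient has a sign that does not match the straightforward computation of $\mathrm{proj}_{x^\perp}(\theta/\langle x,\theta\rangle)$, but since only $\norm{\mathrm{grad}\,\beta}^2$ and the tangency condition $\mathrm{grad}\,\beta \in T_xM$ enter the argument, this has no effect on the proof.
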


\begin{proof}
	Being $M \cup \partial M \subset \mathbb{H}^{p,q} \cup \partial \mathbb{H}^{p,q}$ homeomorphic to a closed disk, it admits a lift to $\widehat{\mathbb{H}}^{p,q} \cup \partial \widehat{\mathbb{H}}^{p,q}$, which determines representatives of $o, \theta$ inside $\widehat{\mathbb{H}}^{p,q} \cup \partial \widehat{\mathbb{H}}^{p,q}$. With abuse, we continue to denote these objects by $M \cup \partial M$, $o$, $\theta$. 
	
	As in Labourie and Toulisse \cite{LT22}*{\S~5.2}, it will be convenient to use a unified notation for both pseudo-Riemannian distance and Busemann functions. In what follows, we let $z \in M \cup \partial M$ and we denote by $\beta = \beta_z$ either the distance function $\beta = d_{\widehat{\mathbb{H}}^{p,q}}(o,\bullet)$ when $z = o \in M$, or the Busemann function $\beta = b_{\theta,o}$ when $z = \theta \in \partial M$. In the latter case, we will not distinguish between the positive projective class $z$ and a choice of a representative in it, as all expressions that follow turn out to be independent of such choice.
	
	The differential of $\beta = \beta_z$ at any $x \in \widehat{\mathbb{H}}^{p,q}$ can be expressed as
	\begin{equation} \label{eq:formula_differential}
		(\dd{\beta})_x(u) = - \frac{\langle u, z \rangle}{\sqrt{\langle z, z \rangle + \langle x, z \rangle^2}} .
	\end{equation}
	It follows from \eqref{eq:formula_differential} that the gradient of $\beta$ with respect to the metric $g_{\widehat{\mathbb{H}}^{p,q}}$ satisfies
\begin{equation}\label{eq:gradient busemann}
	\mathrm{grad} \, \beta|_x = \frac{1}{\sqrt{\langle z, z \rangle + \langle x, z \rangle^2}} \, (z + \langle x, z \rangle \, x) ,
\end{equation}
for any $x \in M - \{z\}$. As we restrict $\beta$ to $M - \{z\}$, its gradient $\nabla \beta$ with respect to $g_M$ coincides with the orthogonal projection of $\mathrm{grad} \, \beta|_x$ onto $T_x M$. 

Consider $w := (\mathrm{grad} \, \beta - \nabla \beta)|_x \in T_x \widehat{\mathbb{H}}^{p,q} \cap (T_x M)^\perp$. By construction $\langle w,w \rangle \leq 0$, so  
\[
\norm{\mathrm{grad} \, \beta}_x^2  = \norm{\nabla \beta}_x^2 + \langle w,w \rangle \leq \norm{\nabla \beta}_x^2 \leq 1 ,
\]
where in the last step we applied our hypothesis. On the other hand,
\begin{align*}
	\norm{\mathrm{grad} \, \beta}_x^2 = \frac{\langle z, z \rangle - \langle x, z \rangle^2 + 2 \, \langle x, z \rangle^2}{\langle z, z \rangle + \langle x, z \rangle^2} = 1 .
\end{align*}
Hence, for the inequality above to hold, we must have $w = 0$ or, in other words, $\mathrm{grad} \, \beta|_x = \nabla \beta|_x$.

If $z = \theta \in \partial M$ and $\beta = b_{\theta,o}$, then this argument applies to every $\theta \in \partial M$, so combining the identity $\mathrm{grad} \, b_{\theta,o}|_x = \nabla b_{\theta,o}|_x$ with \eqref{eq:gradient busemann}, we deduce that 
\[
\partial M \subset (\mathrm{Span}(x) + T_x M) \cap \partial \widehat{\mathbb{H}}^{p,q} .
\]

Since $\partial M$ is a topological $(p-1)$-sphere embedded in $\partial \widehat{\mathbb{H}}^{p,q}$, the vector subspace ${\rm Span}(\partial M) \subset \mathbb{R}^{p,q+1}$ has dimension at least $p+1={\rm dim}(\mathrm{Span}(x) + T_x M)$. It follows that ${\rm Span}(\partial M)=\mathrm{Span}(x) +T_x M$. Since the point $x \in M$ is arbitrary, we deduce that the totally geodesic spacelike subspace $\widehat{\mb{H}}^p_x$ given by the connected component of 
\[
(\mathrm{Span}(x)+T_x M)\cap\widehat{\mb{H}}^{p,q}={\rm Span}(\partial M)\cap\widehat{\mb{H}}^{p,q}
\]
that contains $x$ is in fact independent of $x$ and $M=\widehat{\mb{H}}^p_x$, being $M$ complete.

Similarly, if $\beta = d_{\widehat{\mathbb{H}}^{p,q}}(o,\bullet)$, the argument from above, together with \eqref{eq:gradient busemann}, implies that $o \in \mathrm{Span}(x) + T_x M$ for every $o \in M - \{x\}$. In particular, we deduce that $M \subset \widehat{\mb{H}}^p_x$, where $\widehat{\mb{H}}^p_x$ is defined as above. Again by the completeness of $M$, it follows that $M = \widehat{\mathbb{H}}^{p}_x$.
\end{proof}

This proves the rigidity part (1) in Theorem~\ref{thm:main3}.

\subsection{Upper bound for the gradient}
We now move to the proof of Theorem~\ref{thm:main3}. 

\begin{thm}[{Theorem~\ref{thm:main3}}]
	Let $M$ be a complete, spacelike, maximal $p$-submanifold of $\mathbb{H}^{p,q}$ and let $\beta:M\to\mb{R}$ denote the restriction of either the pseudo-Riemaniann distance from some $o\in M$ or the pseudo-Riemannian Busemann function associated with some $\theta\in\partial M \subset \partial \mathbb{H}^{p,q}$. Then
	\[
	\norm{\nabla \beta}_{M,\infty} \leq \sqrt{p} .
	\] 
\end{thm}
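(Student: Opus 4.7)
Let $\phi := \norm{\nabla \beta}^2$ on $M$. The plan is to apply a maximum-principle argument to $\phi$ at an interior maximum, leveraging the Hessian formula of Proposition~\ref{prop:hessian of busemann}, Ishihara's Ricci bound~\eqref{eq:ricci bound}, and the Gauss equation~\eqref{eq:Gauss-Codazzi}.

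\textbf{Reducing to an attained maximum.} Since $M$ is typically not cocompact, the supremum of $\phi$ need not be realized. I would invoke Proposition~\ref{prop:compactness maximal} on the disk bundle $\mathcal{BM}\!\mathit{ax}_{p,q}$ to rectify this. Given a sequence $y_n \in M$ with $\phi(y_n) \to \sup \phi$, consider the triples $(y_n, M, z_n)$, where $z_n = o$ in the distance case and $z_n = \theta$ in the Busemann case. Translate each triple by some $g_n \in \mathrm{SO}(p,q+1)$ so that $g_n y_n = y_\star$ at a fixed basepoint with fixed spacelike tangent $p$-plane. By Proposition~\ref{prop:compactness maximal}, the transformed triples subconverge to some $(y_\star, M_\infty, z_\infty)$ with $z_\infty \in M_\infty \cup \partial M_\infty$; isometric invariance of the construction makes $y_\star$ a global maximum of the corresponding $\phi_\infty$ on $M_\infty$. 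It therefore suffices to bound $\phi$ at a genuine interior maximum $y \in M$, noting that the type of $\beta$ may swap (from distance to Busemann) if $z_n$ escapes to infinity.

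\textbf{First-order analysis at $y$.} The vanishing $\nabla \phi|_y = 0$ is equivalent to $\nabla^2 \beta(\nabla\beta, \cdot)|_y = 0$. Substituting this into the formula of Proposition~\ref{prop:hessian of busemann}, and using that the ambient gradient $\widetilde\nabla\beta \in T\widehat{\mathbb{H}}^{p,q}$ has norm $1$ (computed inside the proof of Proposition~\ref{pro:busemann}), so that its normal component $N$ satisfies $\langle N, N\rangle = 1 - \phi$, I would read off that $\nabla\beta$ is an eigenvector of the shape operator $B_N$ with eigenvalue $\phi - 1$. Cauchy-Schwarz in the negative-definite inner product on the normal bundle then gives
\[
(\phi - 1)^2 \;=\; \langle \mathbb{I}(e_1, e_1), N\rangle^2 \;\leq\; \abs{\mathbb{I}(e_1, e_1)}_-^2\,\abs{N}_-^2 \;=\; \abs{\mathbb{I}(e_1, e_1)}_-^2 \,(\phi - 1),
\]
where $e_1 := \nabla\beta/\sqrt\phi$ and $\abs{\cdot}_-^2 := -\langle\cdot,\cdot\rangle$. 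Hence $\abs{\mathbb{I}(e_1, e_1)}_-^2 \geq \phi - 1$, so a fortiori the quantity $\kappa_1 := \sum_{i=1}^p \abs{\mathbb{I}(e_i, e_1)}_-^2$ satisfies $\kappa_1 \geq \phi - 1$.

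\textbf{Bochner identity and conclusion.} Bochner at $y$ reads
\[
\tfrac 12 \Delta\phi|_y \;=\; \norm{\nabla^2\beta}_y^2 \,+\, \mathrm{Ric}_M(\nabla\beta, \nabla\beta)|_y,
\]
since the cross term $\langle \nabla\beta, \nabla\Delta\beta\rangle = -\langle \nabla\beta, \nabla\phi\rangle$ vanishes (in the Busemann case $\Delta\beta = p - \phi$). Coupling the Gauss equation~\eqref{eq:Gauss-Codazzi} with maximality yields $\mathrm{Ric}_M(\nabla\beta, \nabla\beta) = \phi(\kappa_1 - p + 1)$, and the maximum principle $\Delta\phi|_y \leq 0$ then gives
\[
\norm{\nabla^2\beta}_y^2 \;\leq\; \phi(p - 1 - \kappa_1) \;\leq\; \phi(p - \phi).
\]
On the other hand, completing $e_1$ to an orthonormal frame $\{e_1, \ldots, e_p\}$, the Hessian $\nabla^2\beta|_y$ vanishes on the $e_1$-line (by the first-order condition) and, using $\tr B_N = 0$, its restriction to $\mathrm{span}\{e_2, \ldots, e_p\}$ is a symmetric $(p-1)\times(p-1)$ operator of trace $p - \phi$. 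Cauchy-Schwarz in the Hilbert-Schmidt inner product therefore yields
\[
\norm{\nabla^2\beta}_y^2 \;\geq\; \frac{(p - \phi)^2}{p - 1}.
\]
Combining the two, $(p - \phi)^2/(p-1) \leq \phi(p - \phi)$; were $\phi > p$, the left-hand side would be positive and the right-hand side negative, a contradiction. Hence $\phi(y) \leq p$, i.e., $\norm{\nabla\beta}_{M,\infty} \leq \sqrt p$. The rigidity assertion Theorem~\ref{thm:main3}(2) comes out for free: equality $\phi(y) = p$ forces the chain of inequalities to collapse, in particular $\norm{\nabla^2\beta}_y^2 = 0$.

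\textbf{The main obstacle} is the distance-function case: the Hessian acquires a $\coth(r)$ factor and $\Delta\beta = \coth(r)(p - \phi)$, so the Bochner cross term $\langle \nabla\beta, \nabla\Delta\beta\rangle$ no longer vanishes at a critical point of $\phi$. I expect the strategy to still go through, with the sharper first-order bound $\kappa_1 \geq \coth^2(r)(\phi - 1)$ (derived from $B_N(\nabla r) = \coth(r)(\phi - 1) \nabla r$) and the identity $\coth^2(r) - \mathrm{csch}^2(r) = 1$ producing, after an algebraic cancellation of the factor $\phi - 1$, the same quadratic inequality and the same conclusion $\phi \leq p$. Carrying out this cancellation cleanly so that both cases unify is the main technical hurdle.
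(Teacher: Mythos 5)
Your Bochner-formula approach is correct and genuinely different from the paper's argument. The paper studies the function $F(x,u,z) = \langle \nabla\beta_z|_x, u\rangle$ on $T^1 M \times (M \cup \partial M)$, computes its second variation in horizontal directions $(\dot{x},0)$ explicitly via the Gauss--Codazzi equations, and traces. You instead apply the Bochner identity to $\phi = \norm{\nabla\beta}^2$, which lets the Gauss equation and maximality enter once through $\mathrm{Ric}_M(\nabla\beta,\nabla\beta) = \phi(\kappa_1 - p + 1)$, and you derive the key estimate $\kappa_1 \geq \phi - 1$ from the same eigenvalue/Cauchy--Schwarz observation the paper makes in Lemma~\ref{lem:critical point}. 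Both routes rely on the cocompactness of $\mathcal{BM}\!\mathit{ax}_{p,q}$ to attain the extremum. Interestingly, the paper itself reaches for Bochner in its \emph{remark} on the equality case, so your argument can be read as promoting that remark to the full proof. One small economy: the Hilbert--Schmidt lower bound $\norm{\nabla^2\beta}^2 \geq (p - \phi)^2/(p-1)$ is not actually needed for the inequality $\phi \leq p$; the trivial $\norm{\nabla^2\beta}^2 \geq 0$ already gives $\phi(\kappa_1 - p + 1) \leq 0$, and $\kappa_1 \geq \phi - 1$ then forces $\phi \leq p$ (you do need the sharper bound for the equality analysis in item (2), where the chain of inequalities collapses).

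The distance-function case that you flagged as the ``main obstacle'' does go through along the lines you sketched, and is worth writing out since Proposition~\ref{prop:hessian of busemann} is only stated for Busemann functions. Redoing the ambient Hessian computation for $\beta = d_{\widehat{\mathbb{H}}^{p,q}}(o,\cdot)$ gives $\nabla^2\beta = \coth\beta\,(g_M - \dd\beta\otimes\dd\beta) + \langle B_N \bullet, \bullet\rangle$, hence $\Delta\beta = \coth\beta\,(p - \phi)$ and, at a critical point of $\phi$, $B_N\nabla\beta = \coth\beta\,(\phi-1)\nabla\beta$, giving $\kappa_1 \geq \coth^2\beta\,(\phi-1)$ as you predicted. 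The Bochner cross term at the critical point becomes $\langle\nabla\beta,\nabla\Delta\beta\rangle = -(p-\phi)\,\mathrm{csch}^2\beta\,\phi$. Plugging everything into Bochner, writing $a = \coth^2\beta = 1 + \mathrm{csch}^2\beta$, and dividing by $\phi$, the inequality $\Delta\phi \leq 0$ reduces (after the $(\phi-1)$-cancellation you anticipated) to
\[
\frac{a\,(p^2 - 2p\phi + \phi)}{p-1} + \phi \;\geq\; 0 .
\]
When $a = 1$ (the Busemann limit) this is exactly $p(p-\phi)/(p-1) \geq 0$, i.e.\ $\phi \leq p$; when $a > 1$ (the distance case) both $p^2 - 2p\phi + \phi$ and $p - \phi$ are strictly negative if $\phi \geq p$, so the inequality forces $\phi < p$ strictly, matching the paper's conclusion. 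So the hurdle you were worried about is an algebraic check, not a genuine obstruction.

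Two minor points to tighten. First, after translating by $g_n$ you should justify that the translated centers $g_n z_n$ subconverge to a point $z_\infty \in M_\infty \cup \partial M_\infty$; this is exactly what Proposition~\ref{prop:compactness maximal} gives, but it is worth noting explicitly that a distance-function problem may become a Busemann problem in the limit, so your proof must handle both (which it does). Second, the Cauchy--Schwarz step $(\phi-1)^2 \leq |\mathbb{I}(e_1,e_1)|_-^2\,(\phi-1)$ is only informative when $\phi > 1$; when $\phi \leq 1$ there is nothing to prove since $p \geq 2$, so it is fine, but a one-line remark would make the argument airtight.
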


\begin{proof}[Proof of Theorem~\ref{thm:main3}]
	
	As in Lemma~\ref{lem:critical point}, we select a lift of $M \cup \partial M$ to $\widehat{\mathbb{H}}^{p,q} \cup \partial \widehat{\mathbb{H}}^{p,q}$ and adopt the same conventions (and abuses of notations). Given any $z \in M \cup \partial M$, we denote by $\beta = \beta_z : M \to \mathbb{R}$ either the pseudo-Riemannian Busemann or distance function associated with $z$, depending on whether $z = o \in M \subset \widehat{\mathbb{H}}^{p,q}$ or $z = \theta \in \partial \widehat{\mathbb{H}}^{p,q}$, respectively. The norm of the gradient of $\beta$ with respect to the induced metric of $M$ can be characterized as
	\[
	\norm{\nabla \beta}_x = \max_{u \in T_x^1 M} (\dd\beta)_x(u) .
	\]
	
	Now we define
	\[
	F_M : T^1 M \times (M \cup \partial M) \longrightarrow \mathbb{R}, \qquad F_M(x,u,z) : = (\dd\beta_z)_x(u) = \langle \nabla \beta_z|_x, u \rangle .
	\]
	It is not hard to see that the function $F_M$ is continuous on $T^1 M \times (M \cup \partial M)$ (see e.g. \eqref{eq:formula_differential}). We claim that it is not restrictive to assume that $F_M$ achieves its maximum at some point $(x,u,z)$. We deduce this from the compactness criterion of Proposition~\ref{prop:compactness maximal}. More precisely, consider
	\[
	\begin{matrix}
		\mathcal{F} : \mathcal{BM}\!\mathit{ax}_{p,q} & \longrightarrow & \mathbb{R} \\
		(x,M,z) & \longmapsto & \norm{\nabla \beta_z}_x .
	\end{matrix}
	\]
	The map $\mathcal{F}$ is continuous and $\mathrm{SO}(p,q+1)$-invariant. Therefore, by Proposition~\ref{prop:compactness maximal}, $\mathcal{F}$ attains a maximum and, by definition, $\max \mathcal{F} \geq \sup F_M$ for any complete, spacelike maximal $p$-submanifold $M$ of $\widehat{\mathbb{H}}^{p,q}$. Throughout the remainder of the exposition, we denote by $(x,M,z) \in \mathcal{BM}\!\mathit{ax}_{p,q}$ a point realizing $\max \mathcal{F}$ and we set $F = F_M$, $\beta = \beta_z$, and $u \in T_x^1 M$ to be a unit vector such that $\norm{\nabla \beta_z}_x = \langle \nabla \beta|_x , u \rangle$.
	
	Notice that, if $M = \mathsf{M}_\rho$ is invariant under the action of some convex cocompact representation $\rho : \Gamma \to \mathrm{SO}(p,q+1)$, then we do not need to apply the compactness criterion from Proposition~\ref{prop:compactness maximal} to conclude the existence of a maximum. Indeed, the map $F = F_M$ is invariant under the action of $\Gamma$ given by
	\[
	\gamma \cdot (x,u,z) = (\rho(\gamma)(x), (\dd\rho(\gamma))_x(u), \rho(\gamma)(z)) , \qquad \gamma \in \Gamma, (x,u,z) \in T^1 M \times (M \cup \partial M) .
	\]	
	Moreover, the quotient $(T^1 M \times (M \cup \partial M))/\Gamma$, being the total space of a bundle over $M/\rho(\Gamma)$ with fiber $\mathbb{S}^{p - 1} \times (M \cup \partial M)$, is compact and, hence, $F$ attains its maximum at some point $(x,u,z)$. 
	
	We now apply the following technical lemma, whose proof will be postponed to the end of this argument.

	\begin{lem}\label{lem:maximum principle}
		Let $M$ be a complete, spacelike maximal $p$-submanifold of $\widehat{\mathbb{H}}^{p,q}$. If $(x,u) \in T^1 M$ is a point of maximum of $F(\bullet,z)$ and $L : = F(x,u,z)$, then
		\[
		L^2 - p - \frac{\langle z, z \rangle}{\langle u, z \rangle^2} \, L^2 (2 L^2 - p - 1) \leq 0 ,
		\]
		where $z$ is either a point in $M \subset \widehat{\mathbb{H}}^{p,q}$ or (a representative of the positive projective class of) a point in $\partial M \subset \partial \widehat{\mathbb{H}}^{p,q}$.
	\end{lem}

	If $z \in \partial M$, then $\langle z, z \rangle = 0$. In particular, we deduce from Lemma~\ref{lem:maximum principle} that $L^2 \leq p$. On the other hand, if $z \in M$ and $d : = d_{\mathbb{H}^{p,q}}(z,x)$, then $\langle z, z \rangle = -1$, $\cosh d = - \langle x, z \rangle$ and
	\[
	L^2 = \frac{\langle u , z \rangle^2}{\sinh^2 d} .
	\]
	Using these identities we see that the inequality from Lemma~\ref{lem:maximum principle} can be rewritten as
	\[
	\frac{\cosh^2 d + 1}{\sinh^2 d} \left(L^2 - p + \frac{p - 1}{\cosh^2 d + 1}\right) \leq 0 .
	\]
	It follows that also in this case $L^2 < p$.
\end{proof}

For future convenience, we observe that Theorem~\ref{thm:main3} can be restated in the following way.

\begin{thm}\label{thm:restatement_thm:main3}
	Let $M$ be a complete, spacelike, maximal $p$-submanifold of $\mathbb{H}^{p,q}$. For any $v \in \mathbb{R}^{p,q+1}$ and $y \in M$, we denote by $v^N_y$ the orthogonal projection of $v$ onto $T_y \mathbb{H}^{p,q} \cap (T_y M)^\perp$. Then:
	\begin{enumerate}
		\item For any $x, y \in M$, 
		\[
		0 \leq - \langle x^N_y , x^N_y \rangle \leq (p - 1) \, \sinh^2 d_{\mathbb{H}^{p,q}}(x,y) .
		\]
		\item For any $\theta \in \partial M \subset \partial \mathbb{H}^{p,q}$ and for any $y \in M$,
		\[
		0 \leq - \langle \tilde{\theta}^N_y, \tilde{\theta}^N_y \rangle \leq (p - 1) \, \langle \tilde{\theta}, y \rangle^2 
		\]
		where $\tilde{\theta} \in \mathbb{R}^{p,q+1} - \{0\}$ is some (any) representative of $\theta \in \partial \mathbb{H}^{p,q}$.
	\end{enumerate}
\end{thm}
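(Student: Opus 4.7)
The plan is to recognize that Theorem~\ref{thm:restatement_thm:main3} is a direct reformulation of the gradient bound $\|\nabla \beta\|_{M,\infty}^2 \leq p$ from Theorem~\ref{thm:main3}, once one unpacks the relationship between the normal component (in the sense of the statement) of a vector $v \in \mathbb{R}^{p,q+1}$ at $y \in M$ and the normal component of the gradient of the corresponding ambient distance or Busemann function. The main content is therefore bookkeeping, with no real obstacle beyond a routine calculation; the only mildly delicate point is keeping track of signs, since the relevant subspace is negative-definite.

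Concretely, I will lift $M \cup \partial M$ to $\widehat{\mathbb{H}}^{p,q} \cup \partial \widehat{\mathbb{H}}^{p,q}$ and choose representatives for $x, y, \tilde\theta$ there. By equation~\eqref{eq:gradient busemann} from the proof of Proposition~\ref{pro:busemann}, the ambient gradient of the associated function $\beta_z$ satisfies
\[
\mathrm{grad}\,\beta_z|_y \;=\; \frac{1}{\sqrt{\langle z,z\rangle + \langle y, z\rangle^2}}\,\bigl(z + \langle y, z\rangle\,y\bigr)
\]
for $z \in \{x, \tilde\theta\}$. Since the position vector $y$ is $\langle \bullet,\bullet\rangle$-orthogonal to $T_y \widehat{\mathbb{H}}^{p,q}$, projecting $z$ onto $T_y\widehat{\mathbb{H}}^{p,q} \cap (T_y M)^\perp$ yields the same vector as projecting $z + \langle y, z\rangle y$ onto that subspace. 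Decomposing $\mathrm{grad}\,\beta_z|_y = \nabla \beta_z|_y + w_y$ with $w_y \in T_y\widehat{\mathbb{H}}^{p,q} \cap (T_y M)^\perp$, I thus identify
\[
w_y \;=\; \frac{1}{\sqrt{\langle z,z\rangle + \langle y, z\rangle^2}}\,z^N_y.
\]

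From the computation in the proof of Proposition~\ref{pro:busemann}, $\|\mathrm{grad}\,\beta_z\|_y^2 = 1$ always, so the orthogonality between $\nabla \beta_z|_y$ and $w_y$ with respect to the ambient metric gives
\[
\|\nabla \beta_z\|_y^2 \;=\; 1 - \langle w_y, w_y\rangle \;=\; 1 - \frac{\langle z^N_y, z^N_y\rangle}{\langle z,z\rangle + \langle y, z\rangle^2}.
\]
The induced bilinear form on $T_y\widehat{\mathbb{H}}^{p,q} \cap (T_y M)^\perp$ is negative-definite because $M$ is spacelike and $T_y\widehat{\mathbb{H}}^{p,q}$ has signature $(p,q)$; this supplies the non-negativity $0 \leq -\langle z^N_y, z^N_y\rangle$. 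The upper bound $\|\nabla \beta_z\|_y^2 \leq p$ from Theorem~\ref{thm:main3} rearranges to
\[
-\langle z^N_y, z^N_y\rangle \;\leq\; (p-1)\bigl(\langle z,z\rangle + \langle y, z\rangle^2\bigr).
\]
Specializing to $z = x \in M$, where $\langle x, x\rangle = -1$ and $\langle y, x\rangle^2 = \cosh^2 d_{\mathbb{H}^{p,q}}(x,y)$, so that $\langle z,z\rangle + \langle y, z\rangle^2 = \sinh^2 d_{\mathbb{H}^{p,q}}(x,y)$, yields (1). Specializing to $z = \tilde\theta$ representing a point of $\partial M \subset \partial \mathbb{H}^{p,q}$, where $\langle \tilde\theta, \tilde\theta\rangle = 0$, yields (2). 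Independence of the chosen representative of $\theta$ in (2) is automatic, since both sides scale by the same positive factor under $\tilde\theta \mapsto \lambda\,\tilde\theta$.
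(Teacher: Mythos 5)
Your proof is correct and follows essentially the same route as the paper's: both start from the gradient formula \eqref{eq:gradient busemann}, use $\|\mathrm{grad}\,\beta_z\|_y^2 = 1$, decompose the ambient gradient into tangential and normal components to obtain $-\langle z_y^N, z_y^N\rangle = (\langle z,z\rangle + \langle y,z\rangle^2)(\|\nabla\beta_z\|_y^2 - 1)$, and then invoke the bound $\|\nabla\beta_z\|_y^2 \leq p$ from Theorem~\ref{thm:main3}. The only difference is presentational: you spell out the identification of the normal component $w_y$ with $z_y^N$ (up to the scalar) a bit more explicitly than the paper does.
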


\begin{proof}
	With the same notations of Proposition~\ref{pro:busemann}, we denote by $\beta = \beta_z : M \to \mathbb{R}$ the restriction of either the Busemann function $b_{\theta,o}$ (for some choice of basepoint $o \in M$) associated with $z = \theta \in \partial M$, or the distance function $d_{\mathbb{H}^{p,q}}(x,\cdot)$ from some fixed point $z = x \in M$. 
	
	We observe that, if $\nabla \beta|_y$ denotes the gradient of the restriction of $\beta$ on $M$ with respect to its induced metric, then $\nabla \beta|_y$ coincides with the projection of $\mathrm{grad} \, \beta$ onto $T_y M$ for any $y \in M$. Since $\norm{\mathrm{grad} \, \beta}^2 = 1$, relation \eqref{eq:gradient busemann} implies:
	\begin{align*}
		- \langle z_y^N, z_y^N \rangle & = - \langle (z + \langle z, y \rangle \, y)_y^N, (z + \langle z, y \rangle \, y)_y^N \rangle \\
		& = (\langle z,z \rangle + \langle z, y \rangle^2) (\norm{\nabla \beta}_y^2 - \norm{\mathrm{grad}\, \beta}_y^2) \\
		& = (\langle z,z \rangle + \langle z, y \rangle^2)(\norm{\nabla \beta}_y^2 - 1) .
	\end{align*}
	By Theorem~\ref{thm:main3}, the term $\norm{\nabla \beta}_y^2$ is bounded from above by $p$. From here, both items follow simply by replacing the role of $z$ with either $x \in M$ or $\theta \in \partial M$ (and noticing that $\sinh d_{\mathbb{H}^{p,q}}(x,y) = \sqrt{\langle x, y \rangle^2 - 1}$ for any pair of space-related points $x, y$). 
\end{proof}

We dedicate the remainder of the current section to the proof of Lemma~\ref{lem:maximum principle}. As above, we select a lift of $M \cup \partial M$ inside $\widehat{\mathbb{H}}^{p,q} \cup \partial \widehat{\mathbb{H}}^{p,q}$, which we continue to denote with abuse by $M \cup \partial M$. Moreover, we let $\beta = \beta_z : M \to \mathbb{R}$ denote the restriction on $M$ of either the pseudo-Riemannian Busemann function $b_{\theta,o}$ in $\widehat{\mathbb{H}}^{p,q}$ when $z = \theta \in \partial M \subset \partial \widehat{\mathbb{H}}^{p,q}$, or a pseudo-Riemannian distance function $\beta = d_{\widehat{\mathbb{H}}^{p,q}}(o,\bullet)$ when $z = o \in M \subset \widehat{\mathbb{H}}^{p,q}$. Finally, we recall
\[
F : T^1 M \times (M \cup \partial M) \longrightarrow \mathbb{R}, \qquad F(x,u,z) : = (\dd\beta_z)_x(u) .
\]

\begin{lem}\label{lem:critical point}
	A point $(x,u) \in T^1 M$ is a critical point for $F_z = F(\bullet,z)$ if and only if 
	\begin{itemize}
		\item $z^T = \langle u, z \rangle \, u$, and
		\item $u$ is an eigenvector of the shape operator $(B_{z^N})_x$,
	\end{itemize}
	where $z^T$ and $z^N$ denote the orthogonal projections of $z$ onto $T_x M$ and $(T_x M)^\perp \cap T_x \widehat{\mathbb{H}}^{p,q} \subset \mathbb{R}^{p,q+1}$, respectively. Moreover, if this happens then
	\[
	(B_{z^N})_x(u) = \langle x, z \rangle \, (F_z(x,u)^2 -1) \, u .
	\]
\end{lem}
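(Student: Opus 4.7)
The strategy is to identify critical points of $F_z$ on $T^1 M$ by computing first-order variations and splitting them into vertical (fiber) and horizontal (base) contributions. A tangent vector to $T^1 M$ at $(x,u)$ is a pair $(V, W) \in T_x M \times T_x M$ with $g_M(u, W) = 0$, encoding the unit-norm constraint on $u$. If $x(t)$ is a curve with $x'(0) = V$ and $\tilde u(t)$ is a unit vector field along it with $\frac{D}{dt}\tilde u|_0 = W$, then
\[
\frac{d}{dt}\bigg|_{t=0} F_z(x(t), \tilde u(t)) = (\nabla^2 \beta)_x(V, u) + g_M(\nabla\beta|_x, W) .
\]
Because $V \in T_x M$ and $W \in u^\perp \subset T_x M$ can be chosen independently, vanishing of this derivative splits into the two conditions $g_M(\nabla\beta|_x, W) = 0$ for every $W \perp u$, and $(\nabla^2 \beta)_x(V, u) = 0$ for every $V \in T_x M$.

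The first condition forces $\nabla\beta|_x$ to be proportional to $u$; pairing with $u$ identifies the proportionality factor as $L := F_z(x,u)$. To translate $\nabla\beta|_x = L u$ into the stated identity, I would use that in both the Busemann and distance cases the ambient gradient $\mathrm{grad}\,\beta$ in $\widehat{\mathbb{H}}^{p,q}$ is a scalar multiple of $z + \langle x, z\rangle x$. Under the decomposition $\mathbb{R}^{p,q+1} = T_x M \oplus \bigl((T_x M)^\perp \cap T_x \widehat{\mathbb{H}}^{p,q}\bigr) \oplus \mathbb{R}\cdot x$ one has $z + \langle x, z\rangle x = z^T + z^N$, so projecting $\mathrm{grad}\,\beta$ onto $T_x M$ yields $\nabla\beta|_x$ as the \emph{same} scalar times $z^T$. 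Since $L = (d\beta)_x(u)$ is proportional to $\langle u, z\rangle$ by exactly that same scalar (this is visible from \eqref{eq:formula_differential}), the relation $\nabla\beta|_x = L u$ collapses to $z^T = \langle u, z\rangle u$.

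The second condition is unpacked via Proposition~\ref{prop:hessian of busemann} in the Busemann case, and via the analogous identity $\nabla^2 \beta = \coth\beta\,(g_M - d\beta \otimes d\beta) + d\beta(\mathbb{I}(\cdot,\cdot))$ in the distance case (derived by an entirely parallel computation starting from $\cosh\beta = -\langle o, x\rangle$). At a critical point one has $d\beta(V) = L\, g_M(u, V)$, so the intrinsic part of the Hessian contracted against $u$ gives $(1 - L^2) g_M(V, u)$, up to the factor $\coth\beta$ in the distance case. The extrinsic term $d\beta(\mathbb{I}(V,u)) = \langle \mathrm{grad}\,\beta, \mathbb{I}(V,u)\rangle$ reduces, after substituting the expression of $\mathrm{grad}\,\beta$ and using $\mathbb{I}(V,u) \in (T_x M)^\perp \cap T_x \widehat{\mathbb{H}}^{p,q}$, to a scalar multiple of $g_M(B_{z^N}(V), u)$. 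The identity $\cosh\beta = -\langle x, z\rangle$ absorbs the $\coth\beta$ factor in the distance case, and one arrives uniformly at $g_M(B_{z^N}(V), u) = \langle x, z\rangle (L^2 - 1)\, g_M(V, u)$ for every $V \in T_x M$. By symmetry of the shape operator this is equivalent to $B_{z^N}(u) = \langle x, z\rangle (L^2 - 1)\, u$, giving both the eigenvector property and the explicit eigenvalue. The converse implication follows by reading the argument backwards.

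The main thing to keep straight is the bookkeeping of the scalar factors relating $\mathrm{grad}\,\beta$ to $z + \langle x, z\rangle x$ (namely $1/\langle x, z\rangle$ in the Busemann case and $-1/\sinh\beta$ in the distance case), since a priori one might worry that the conclusions depend on the case; the pleasing feature is that the compatibility between these scalars and the corresponding formula for $L = (d\beta)_x(u)$ makes both the critical-point condition $z^T = \langle u, z\rangle u$ and the eigenvalue $\langle x, z\rangle (L^2 - 1)$ uniform across $z \in M \cup \partial M$.
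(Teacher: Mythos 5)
Your proof is correct, and it takes a genuinely different route from the paper's. Both arguments split $T_{(x,u)}T^1M$ into horizontal and vertical subspaces, but you package the first variation of $F_z$ using the general identity $\tfrac{d}{dt}\big|_0 F_z = (\nabla^2\beta)_x(V,u) + g_M(\nabla\beta|_x,W)$ and then feed in the intrinsic Hessian formula from Proposition~\ref{prop:hessian of busemann}, together with its distance-function counterpart $\nabla^2\beta = \coth\beta\,(g_M - d\beta\otimes d\beta) + d\beta(\mathbb{I})$; the paper instead differentiates the explicit expression \eqref{eq:formula_differential} from scratch and collects everything in the single formula \eqref{eq:differential2}, from which both critical-point conditions are read off. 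Your version is more conceptual and makes the structure transparent (vertical part $\Leftrightarrow$ gradient proportional to $u$; horizontal part $\Leftrightarrow$ $u$ an eigenvector of the Hessian), and the point you flag about the scalar relating $\mathrm{grad}\,\beta$ to $z + \langle x,z\rangle x$ — $1/\langle x,z\rangle$ versus $-1/\sinh\beta$ — is exactly the bookkeeping that makes the two cases collapse to the same eigenvalue $\langle x,z\rangle(L^2-1)$. Two small things worth keeping in mind: Proposition~\ref{prop:hessian of busemann} as stated covers only Busemann functions, so your distance-case Hessian formula does need a (parallel, but separate) derivation from $\cosh\beta = -\langle o,\cdot\rangle$ and the identity $D^2(\langle o,\cdot\rangle) = \langle o,\cdot\rangle\,g$; and the paper's choice to compute $dF$ directly pays off because the resulting expression \eqref{eq:first order derivative} is what gets differentiated once more in the proof of Lemma~\ref{lem:maximum principle}, so if one adopted your cleaner derivation of Lemma~\ref{lem:critical point} one would still need a version of \eqref{eq:first order derivative} or \eqref{eq:differential2} later.
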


\begin{proof}
	As in the proof of Proposition~\ref{prop:hessian of busemann}, we denote by $D^0$, $D$, and $\nabla$ the Levi-Civita connections of $\mathbb{R}^{p,q+1}$, $\widehat{\mathbb{H}}^{p,q}$, and $M$, respectively. To prove the desired statement, we need to compute the differential of $F$ at $(x,u) \in T^1 M$. Recall that the connection $\nabla$ determines a natural decomposition of the tangent space $T_{(x,u)} T^1 M$ into its horizontal and vertical subspaces
	\[
	T_{(x,u)} T^1 M = T^h_{(x,u)} T^1 M \oplus T^v_{(x,u)} T^1 M \cong T_x M \oplus (u^\perp \cap T_x M) .
	\]
	
	To compute the first order variation of $F$ along $(\dot{x}, \dot{u}) \in T_{(x,u)} T^1 M$, we consider a path $\gamma : (-\varepsilon,\varepsilon) \to M$ of $M$ that satisfies $\gamma(0) = x$, $\gamma'(0) = \dot{x} \in T_x M$, and denote by $U, V \in \Gamma(\gamma^*(T^1 M))$ the $\nabla$-parallel vector fields along $\gamma$ that verify $U(0) = u \in T^1_x M$, $V(0) = \dot{u} \in u^\perp \cap T_x M$. We also define
	\[
	W \in \Gamma(\gamma^*(T^1 M)), \quad W(t) : = \frac{U(t) + t \, V(t)}{\norm{U(t) + t\, V(t)}} .
	\]
	Notice that $W(0) = u$ and $\nabla_{\partial_t} W|_{t = 0} = \dot{u} - \frac{\langle u, \dot{u} \rangle}{\norm{u}^3} = \dot{u}$. By construction and by \eqref{eq:identities connections}, we have
	\begin{equation}\label{eq:covariant derivative W}
		D^0_{\partial_t} W|_{t = 0} = \nabla_{\partial_t} W|_{t = 0} +  \mathbb{I}_x(\dot{x}, u) + \langle \dot{x}, u \rangle \, x = \dot{u} +  \mathbb{I}_x(\dot{x}, u) + \langle \dot{x}, u \rangle \, x .
	\end{equation}
	From identity \eqref{eq:formula_differential}, we deduce
	\begin{align}\label{eq:differential1}
		\begin{split}
			(\dd F_z)_{(x,u)}(\dot{x},\dot{u}) & = F_z(\gamma,W)' \,|_{t = 0} \\
			& = \left[ - \frac{\langle D^0_{\partial_t} W , z \rangle}{(\langle z, z \rangle + \langle \gamma, z \rangle^2)^{1/2}} + \frac{\langle W , z \rangle \, \langle \gamma , z \rangle \, \langle \gamma' , z \rangle}{(\langle z, z \rangle + \langle \gamma, z \rangle^2)^{3/2}} \right]_{t = 0}
		\end{split}
	\end{align}
	
	For future convenience we notice that, whenever $\langle u, z \rangle \neq 0$, the expression above can be rewritten as
	\begin{equation}\label{eq:first order derivative}
		(\dd F_z)_{(x,u)}(\dot{x},\dot{u}) = \left[ F_z(\gamma,W) \, \frac{\langle D^0_{\partial_t} W , z \rangle}{\langle W, z \rangle} - F_z(\gamma,W)^3 \frac{\langle \gamma , z \rangle \, \langle \gamma' , z \rangle}{\langle W,z \rangle^2} \right]_{t = 0} .
	\end{equation}

	Applying \eqref{eq:differential1}, we express the differential of $F$ at $(x,u)$ as follows:
	\begin{align}\label{eq:differential2}
		\begin{split}
			(\dd F)_{(x,u)}(\dot{x},\dot{u}) & = - \frac{\langle \dot{u} +  \mathbb{I}_x(\dot{x}, u) + \langle \dot{x}, u \rangle \, x , z \rangle}{(\langle z, z \rangle + \langle x, z \rangle^2)^{1/2}} + \frac{\langle u, z \rangle \, \langle x, z \rangle \, \langle \dot{x}, z \rangle}{(\langle z, z \rangle + \langle x, z \rangle^2)^{3/2}} \\
			& = \phi_z(x)\left( - \langle \dot{u} , z^T \rangle + \langle \dot{x}, \, \phi_z(x) \, F_z(x,u) \, \langle x, z \rangle \, z^T - (B_{z^N})_x(u) - \langle x, z \rangle \, u \rangle \right) ,
		\end{split}
	\end{align}
	where $\phi_z(x) := (\langle z, z \rangle + \langle x, z \rangle^2)^{-1/2}$ and
	\[
	z = z^T + z^N - \langle x, z \rangle \, x \in T_x M \oplus ((T_x M)^\perp \cap T_x \mathbb{H}^{p,q}) \oplus (T_x \mathbb{H}^{p,q})^\perp .
	\]
	We can now deduce the claimed characterization. Assume that $(x,u)$ is a critical point. By applying \eqref{eq:differential2} to $(\dot{x},\dot{u}) = (0, \dot{u})$, we see that $\langle \dot{u} , z^T \rangle = 0$ for every $\dot{u} \in u^\perp \cap T_x M$. This implies that $z^T$ belongs to $\mathrm{Span}(u)$ and, hence, $z^T = \langle u, z^T\rangle \, u$. Applying again \eqref{eq:differential2} to $(\dot{x},\dot{u}) = (\dot{x},0)$ and replacing $z^T = \langle u, z^T\rangle \, u$, we obtain
	\[
	\langle \dot{x} , (F_z(x,u)^2 - 1)\, \langle x, z \rangle \, u - (B_{z^N})_x(u) \rangle = 0
	\]
	for every $\dot{x} \in T_x M$, which implies $(B_{z^N})_x(u) = (F_z(x,u)^2 - 1) \, \langle x, z \rangle \, u$. The opposite implication follows directly from \eqref{eq:differential2}.
\end{proof}

We now have all the ingredients to prove the remaining technical statement.

\begin{proof}[Proof of Lemma~\ref{lem:maximum principle}]
	We will adopt the same notations used in the proof of Lemma~\ref{lem:critical point}. We start by determining the second order derivative of $F$ along the curve $t \mapsto (\gamma(t),U(t))$, where:
	\begin{itemize}
		\item $\gamma : (-\varepsilon,\varepsilon) \to M$ is a geodesic arc of $M$ that satisfies $\gamma(0) = x$ and $\gamma'(0) = \dot{x}$.
		\item $U \in \Gamma(\gamma^*(T^1 M))$ denotes the parallel transport of $u$ along $\gamma$. 
	\end{itemize}
	It follows that
	\begin{align}
		D^0_{\partial_t} \gamma' & = \mathbb{I}(\gamma',\gamma') + \langle \gamma', \gamma' \rangle \, \gamma , \label{eq:geodesic} \\
		D^0_{\partial_t} U & = \mathbb{I}(\gamma',U) + \langle \gamma', U \rangle \, \gamma . \label{eq:parallel vector field}
	\end{align}
	Moreover, being $\gamma'$ and $U$ both $\nabla$-parallel vector fields, they satisfy $\langle \gamma' , U \rangle = \langle \gamma' , U \rangle|_{t = 0} = \langle \dot{x}, u \rangle$. 
	
	Assume now that $(x,u) \in T^1 M$ is a point of maximum for $F$. In particular, $(x,u)$ is a critical point of $F$ and, hence, 
	\begin{gather}
		z^T = \langle u, z^T \rangle \, u, \label{eq:projection tangent} \\
		\qquad  (B_{z^N})_x(u) = \langle x, z \rangle \, (L^2 - 1) \, u,\label{eq:eigenvalue shape}
	\end{gather}
	by Lemma~\ref{lem:critical point}. (Recall that $L : = F_z(x,u)$.) It is not restrictive to assume that $L > 1$, in which case $\langle u , z \rangle \neq 0$. To simplify the notation, we set $\hat{z} : = \langle u,z^T \rangle^{-1} z$ and, accordingly, $\hat{z}^T = u$, $\hat{z}^N = \langle u,z^T \rangle^{-1} \, z^N$. By applying equation \eqref{eq:first order derivative} and recalling that $F(\gamma,U)'\,|_{t = 0} = 0$, we have
	\begin{align*}
		F(\gamma,U)'\,|_{t = 0} & = \left.\left( F_z(\gamma,U) \, \frac{\langle D^0_{\partial_t} U , z \rangle}{\langle U, z \rangle} - F_z(\gamma,U)^3 \frac{\langle \gamma , z \rangle \, \langle \gamma' , z \rangle}{\langle U,z \rangle^2} \right)'\, \right|_{t = 0} \\
		& = L \left. \left( \frac{\langle \mathbb{I}(\gamma',U), z \rangle + \langle \dot{x}, u \rangle \, \langle \gamma , z \rangle}{\langle U, z \rangle} \right)'\, \right|_{t = 0} - L^3 \left. \left( \frac{\langle \gamma , z \rangle \, \langle \gamma' , z \rangle}{\langle U,z \rangle^2} \right)' \, \right|_{t = 0} \\
		& = L (\langle D^0_{\partial_t}(\mathbb{I}(\gamma',U))|_{t = 0}, \hat{z} \rangle + \langle \dot{x}, u \rangle \, \langle \dot{x}, \hat{z}^T \rangle ) \\
		& \qquad - L \, (\langle \dot{x}, (B_{\hat{z}^N})_x(u) \rangle + \langle \dot{x}, u \rangle \, \langle x, \hat{z} \rangle ) \, \langle D^0_{\partial_t} U|_{t = 0}. \hat{z} \rangle  \\
		& \qquad - L^3(\langle \dot{x}, \hat{z}^T \rangle^2 + \langle x,\hat{z} \rangle \, \langle D^0_{\partial_t} \gamma'|_{t = 0} , \hat{z} \rangle ) \\
		& \qquad +2 L^3 \, \langle x, \hat{z} \rangle \, \langle \dot{x}, \hat{z}^T \rangle \, \langle D^0_{\partial_t} U|_{t = 0}. \hat{z} \rangle \\
		& = L \, \langle D^0_{\partial_t}(\mathbb{I}(\gamma',U))|_{t = 0}, \hat{z} \rangle + L\, ( 1 - L^2 + L^2 \, \langle x, \hat{z} \rangle^2) \, \langle \dot{x}, u \rangle^2  \\
		& \qquad - L^3 \, \langle x,\hat{z} \rangle \, (\langle \mathbb{I}_x(\dot{x},\dot{x} - \langle \dot{x}, u \rangle \, u), \hat{z}^N \rangle + \langle \dot{x}, \dot{x} \rangle \, \langle x , \hat{z} \rangle),
	\end{align*}
	where in the last line we simplified the expression by applying the identities \eqref{eq:geodesic}, \eqref{eq:parallel vector field}, $\hat{z}^T = u$, and $(B_{\hat{z}^N})_x(u) = \langle x, \hat{z}\rangle (L^2 -1) u$.
	
	Let us focus for a moment on the term $D^0_{\partial_t}(\mathbb{I}(\gamma',U))|_{t = 0}$. The following holds:
	\begin{align*}
		\langle D^0_{\partial_t}(\mathbb{I}(\gamma',U))|_{t = 0}, \hat{z} \rangle & = \langle D_{\partial_t}(\mathbb{I}(\gamma',U))|_{t = 0} + \langle \dot{x}, \mathbb{I}_x(\dot{x},u) \rangle \, x, \hat{z} \rangle \\
		& = \langle (D_{\dot{x}}\mathbb{I})_x(u,\dot{x}), \hat{z} \rangle \\
		& = \langle (D_u \mathbb{I})_x(\dot{x},\dot{x}) + R^\nabla(\dot{x},u)\dot{x} - \langle \dot{x} , u \rangle \, \dot{x} + \langle \dot{x} , \dot{x} \rangle \, u, \hat{z} \rangle \\
		& = \langle (D_u \mathbb{I})_x(\dot{x},\dot{x}), \hat{z}^N \rangle + R^\nabla(\dot{x}, u, \dot{x}, u) - \langle \dot{x}, u \rangle^2 + \langle \dot{x}, \dot{x} \rangle \\
		& = \langle (D_u \mathbb{I})_x(\dot{x},\dot{x}), \hat{z}^N \rangle + \langle \mathbb{I}(\dot{x},\dot{x}), \mathbb{I}(u,u) \rangle - \langle \mathbb{I}(\dot{x},u), \mathbb{I}(\dot{x},u) \rangle  ,
	\end{align*}
	where: In the first equality we expressed the covariant derivative $D_\bullet(\mathbb{I}(\gamma',U))$ as the orthogonal projection (with respect to $\langle \bullet, \bullet \rangle = \langle \bullet, \bullet \rangle_{p, q+1}$) of the (flat) derivative $D^0_\bullet(\mathbb{I}(\gamma',U))$ onto $T_\bullet \widehat{\mathbb{H}}^{p,q}$. In the second step we noticed that $\dot{x}$ is orthogonal to the image of $\mathbb{I}_x$. In the third and fourth equalities, we made use of the Gauss-Codazzi equations \eqref{eq:Gauss-Codazzi} and the expression of the Riemann tensor from \eqref{eq:riemann hpq}.
	
	By combining this identity with the expression of the second order derivative of $F$ from above, we deduce:
	\begin{align*}
	(\mathrm{Hess} \, F)_{(x,u)}&((\dot{x},0), (\dot{x},0)) = F(\gamma,U)''\,|_{t = 0} \\
	& = L \left( \langle (D_u \mathbb{I})_x(\dot{x},\dot{x}), \hat{z}^N \rangle + \langle \mathbb{I}(\dot{x},\dot{x}), \mathbb{I}(u,u) \rangle - \langle \mathbb{I}(\dot{x},u), \mathbb{I}(\dot{x},u) \rangle \right) \\
	& \qquad + L \, (1 - L^2 + L^2 \langle x, \hat{z} \rangle^2) \, \langle \dot{x}, u \rangle^2 \\
	& \qquad - L^3 \, \langle x,\hat{z} \rangle \, (\langle \mathbb{I}_x(\dot{x},\dot{x} - \langle \dot{x}, u \rangle \, u), \hat{z}^N \rangle + \langle \dot{x}, \dot{x} \rangle \, \langle x , \hat{z} \rangle) .
	\end{align*}
	(Notice that, being $(x,u)$ a critical point, the Hessian of $F$ is a well-defined bilinear form on $T_{(x,u)}(T^1 M)$ that does not depend on the choice of a connection on $T^1 M$.) Since $(x,u)$ is a local maximum, we must have $(\mathrm{Hess} \, F)_{(x,u)}((\dot{x},0), (\dot{x},0)) \leq 0$ for every $\dot{x} \in T_x M$. By taking the trace of the bilinear form 
	\[
	(\dot{x}, \dot{x}') \mapsto (\mathrm{Hess} \, F)_{(x,u)}((\dot{x},0), (\dot{x}',0))
	\]
	with respect to the metric $g_M$, we deduce:
	\begin{align}\label{eq:laplacian at max}
		\begin{split}
			0 & \geq L ( - \tr_M(\langle \mathbb{I}(\bullet,u), \mathbb{I}(\bullet,u) \rangle) + 1 - L^2 + L^2 \, \langle x, \hat{z} \rangle^2) \\
			& \qquad + L^3 \langle x, \hat{z} \rangle \, \langle (B_{\hat{z}^N})_x(u),u \rangle - p \, L^3 \, \langle x, \hat{z} \rangle^2 \\
			& = L( - \, \tr_M(\langle \mathbb{I}(\bullet,u), \mathbb{I}(\bullet,u) \rangle) + 1 - L^2 + L^2 \, \langle x, \hat{z} \rangle^2 \, (L^2 - p) ) ,
		\end{split}
	\end{align}
	where in the last step we used again that $(B_{\hat{z}^N})_x(u) = \langle x, \hat{z}\rangle (L^2 -1) u$.
	
	The last necessary ingredient is an estimate from below of $\tr_M(\langle \mathbb{I}(\bullet,u), \mathbb{I}(\bullet,u) \rangle)$. For this purpose, we select bases $(e_i)_{i = 1}^p$ and $(f_j)_{j = 1}^q$ of $T_x M$ and $(T_x M)^\perp \cap T_x \widehat{\mathbb{H}}^{p,q}$, respectively, so that $e_1 = u$ and $f_1 = (- \langle \hat{z}^N, \hat{z}^N \rangle)^{-1/2} \hat{z}^N$. (Notice that $\langle \hat{z}^N, \hat{z}^N \rangle = L^{-2} - 1$, in particular $\hat{z}^N \neq 0$.) Then we have:
	\begin{align}\label{eq:stima norma 2}
		\begin{split}
			- \tr_M(\langle \mathbb{I}(\bullet,u), \mathbb{I}(\bullet,u) \rangle) & = \sum_{j = 1}^{q} \sum_{i = 1}^p \langle \mathbb{I}(u,e_i), f_j \rangle^2 \\
			& \geq - \frac{\langle \mathbb{I}(u,u), \hat{z}^N \rangle^2}{\langle \hat{z}^N, \hat{z}^N \rangle} \\
			& = - \frac{\langle (B_{\hat{z}^N})_x(u), u  \rangle^2}{L^{-2} - 1} \\
			& = L^2 \, \langle x, \hat{z} \rangle^2 (L^2 - 1) .
		\end{split}
	\end{align}
	Notice that in the second line we kept the addend with $(i,j) = (1,1)$ and we estimated from below all the remaining terms with $0$. By applying this inequality in \eqref{eq:laplacian at max} and noticing that $L^2 \, \langle x, \hat{z} \rangle^2 = 1 - \langle \hat{z} , \hat{z} \rangle \, L^2$, we finally obtain
	\begin{align}\label{eq:last step}
		\begin{split}
			0 & \geq L( - \, \tr_M(\langle \mathbb{I}(\bullet,u), \mathbb{I}(\bullet,u) \rangle) + 1 - L^2 + L^2 \, \langle x, \hat{z} \rangle^2 \, (L^2 - p) ) \\
			& \geq L( L^2 \, \langle x, \hat{z} \rangle^2 (L^2 - 1 - p) - L^2 + 1 ) \\
			& = L(L^2 - p - \langle \hat{z}, \hat{z} \rangle \, L^2(2 L^2 - p -1)) .
		\end{split}
	\end{align}
	Since $L > 0$, this implies the desired statement.
\end{proof}

	\begin{rmk}
		Let us briefly discuss the case in which the maximal value $\max F = \sqrt{p}$ is achieved at some point of $M$. In what follows, we address point (2) of Theorem~\ref{thm:main3}.
				
		The argument from above shows that $\max F_z = \sqrt{p}$ only when $z \in \partial M \subset \partial \widehat{\mathbb{H}}^{p,q}$. In such case, $F_z(x,u) = \langle \nabla \beta|_x, u \rangle = \sqrt{p}$ for some $(x,u) \in T^1 M$, where 
		\[
		u = \hat{z}^T = \frac{1}{\sqrt{p}} \, \nabla \beta|_x .
		\]
		Notice also that $\langle x, \hat{z} \rangle = \frac{\langle x,z \rangle }{\langle u,z\rangle} = (\dd\beta)_x(u)^{-1} = \frac{1}{\sqrt{p}}$.
		
		Furthermore, $L = F_z(x,u) = \sqrt{p}$ implies that all inequalities in \eqref{eq:last step} must be equalities and, hence, by relation \eqref{eq:stima norma 2}, it follows that
		\[
		\mathbb{I}_x(u,u) = \frac{\langle \mathbb{I}_x(u,u), \hat{z}^N \rangle}{\langle \hat{z}^N, \hat{z}^N \rangle} \, \hat{z}^N = - \sqrt{p} \, \hat{z}^N , \qquad \mathbb{I}_x(u,v) = 0 ,
		\]
		for every $v \in T_x M \cap u^\perp$ and, in particular, $- \tr_M(\langle \mathbb{I}_x(\bullet, u), \mathbb{I}_x(\bullet, u) \rangle) = p - 1$. Being $M$ maximal, the contracted Gau{\ss} equation \eqref{eq:traccia gauss} implies in particular that
		\[
		\mathrm{Ric}_M|_x(u,u) = - (p - 1) - \tr_M(\langle \mathbb{I}_x(\bullet,u), \mathbb{I}_x(\bullet,u) \rangle) = 0 .
		\]
		
		This fact, combined with the Bochner's formula applied to the function $\beta = b_{z,o}$ at $x \in M$, tells us that
		\begin{align}\label{eq:bochner}
			\begin{split}
				0 \geq \frac{1}{2} \, \Delta \norm{\nabla \beta}^2|_x & = \norm{\nabla^2 \beta}_x^2 + \langle \nabla \beta |_x , \nabla \Delta \beta|_x \rangle + \mathrm{Ric}_M (\nabla \beta,\nabla \beta)|_x \\
				& = \norm{\nabla^2 \beta}_x^2 + \sqrt{p} \, \langle u, \nabla \Delta \beta |_x \rangle + p \, \mathrm{Ric}_M|_x (u,u)  \\
				& = \norm{\nabla^2 \beta}_x^2 + \sqrt{p} \, \langle u, \nabla \Delta \beta |_x \rangle ,
			\end{split}
		\end{align}
		where $\Delta \norm{\nabla \beta}^2|_x \leq 0$ since $x$ is a point of maximum for $\norm{\nabla \beta}^2$. By the second part of Proposition~\ref{prop:hessian of busemann}, the term $\nabla \Delta \beta|_x$ verifies
		\[
		\nabla \Delta \beta|_x = - \nabla (\norm{\nabla \beta}^2)|_x = 0 ,
		\]
		since $x$ is a critical point of $\norm{\nabla \beta}^2$. It follows from \eqref{eq:bochner} that $\norm{\nabla^2 \beta}_x^2 = 0$.
	\end{rmk}

\subsection{Proof of Theorem~\ref{thm:distance comparison}}

We are now ready to prove Theorem~\ref{thm:distance comparison}.

\begin{proof}[Proof of Theorem~\ref{thm:distance comparison}]
Let $\gamma:[0,1]\to M$ be a geodesic for the intrinsic metric of $ M$ joining $x=\gamma(0)$ and $y=\gamma(1)$. In particular $d_ M(x,y)=\ell(\gamma)$. We have
\begin{align*}
d_{\mb{H}^{p,q}}(x,y) &=d_{\mb{H}^{p,q}}(x,y)-d_{\mb{H}^{p,q}}(x,x)\\
 &=\int_0^1{\langle\nabla d_x|_{\gamma(t)},{\dot \gamma}(t)\rangle \dd{t}}.
\end{align*}

By Theorem~\ref{thm:main3}, we have $\abs{\langle \nabla d_x|_{\gamma(t)},{\dot \gamma}(t)\rangle}\le\norm{\nabla d_x}_{ M,\infty} \norm{{\dot \gamma}(t)}\le\sqrt{p} \, \norm{{\dot \gamma}(t)}$. Thus, we get
\[
d_{\mb{H}^{p,q}}(x,y)\le\sqrt{p}\, \ell(\gamma) = \sqrt{p}\, d_M(x,y).
\]

This, combined with Lemma~\ref{lem:distance}, concludes the proof of the theorem.
\end{proof}

\section{A fundamental domain}

Let $\rho:\Gamma\to{\rm SO}(p,q+1)$ be a convex cocompact representation with maximal open, convex domain of discontinuity $\Omega_\rho\subset\mb{H}^{p,q}$. As the arguments we describe in this section only depend on the image group $\rho(\Gamma)$, we will assume that $\rho$ is injective. 

The goal of the section is to construct a geometrically controlled fundamental domain $F\subset\Omega_\rho$ and prove finiteness of volume and diameter (Proposition~\ref{pro:finite vol diam}). We start by choosing arbitrarily a basepoint $o\in C_\rho$ where $C_\rho\subset\Omega_\rho$ is a $\rho(\Gamma)$-invariant proper convex subset on which $\rho(\Gamma)$ acts cocompactly. Define
\[
F(\gamma):=\{x\in\Omega_\rho \left|\;-\langle x,o\rangle\le-\langle x,\rho(\gamma) o\rangle\right.\}
\]
and set
\[
F:=\bigcap_{\gamma\in\Gamma}{F(\gamma)}.
\]

We prove the following properties.

\begin{lem}
\label{lem:fundamental domain}
The following holds:
\begin{enumerate}
\item{$F$ is convex and intersects every orbit of $\Gamma\curvearrowright\Omega_\rho$.}
\item{The closure $\overline{F}$ in $\mb{P}(\mb{R}^{p,q+1})$ is contained in $\mb{H}^{p,q}$.}
\end{enumerate}
\end{lem}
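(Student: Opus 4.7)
Both parts proceed via a Dirichlet-style analysis. First I will fix an affine chart $A\subset\mathbb{P}(\mathbb{R}^{p,q+1})$ containing $\overline{\Omega}_\rho$ and choose a consistent lift of $\Omega_\rho$ to a connected component $\widehat{\Omega}_\rho$ of the preimage in $\widehat{\mathbb{H}}^{p,q}$ (or, equivalently, normalize representatives in $A$) so that $\langle x,\rho(\gamma)o\rangle<0$ for every $x\in\Omega_\rho$ and every $\gamma\in\Gamma$; this is possible since the inner product is $-1$ on the diagonal $\{(x,x)\}\subset\widehat{\Omega}_\rho^2$ and varies continuously on the connected set $\widehat{\Omega}_\rho^2$. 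In these coordinates, each $F(\gamma)$ becomes the intersection of $\Omega_\rho$ with the affine half-space $\{x:\langle x,\rho(\gamma)o-o\rangle\le 0\}$, so $F$ is convex as an intersection of convex sets.

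For the orbit-meeting statement in (1), fix $x\in\Omega_\rho$ and consider the Dirichlet function $f_x:\Gamma\to(0,\infty)$, $f_x(\gamma):=-\langle x,\rho(\gamma)o\rangle$. The crucial step is properness: if $\gamma_n\to\infty$ in $\Gamma$, then by proper discontinuity the orbit $[\rho(\gamma_n)o]$ exits every compact subset of $\mathbb{H}^{p,q}$, so, after passing to a subsequence and normalizing with a background Euclidean norm $\|\cdot\|$ on $\mathbb{R}^{p,q+1}$, one obtains $\rho(\gamma_n)o/\|\rho(\gamma_n)o\|\to\bar z$ with $[\bar z]\in\Lambda_\rho$ and $\|\rho(\gamma_n)o\|\to+\infty$. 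Since $\Omega_\rho$ is a connected component of $\mathbb{H}^{p,q}\setminus\bigcup_{\xi\in\Lambda_\rho}\mathbb{P}(\langle\xi\rangle^\perp)$, the value $\langle x,\bar z\rangle$ is nonzero, and the sign convention above forces $\langle x,\bar z\rangle<0$. Therefore $f_x(\gamma_n)\sim\|\rho(\gamma_n)o\|\cdot|\langle x,\bar z\rangle|\to+\infty$, so $f_x$ attains its minimum at some $\gamma_*\in\Gamma$ and $\rho(\gamma_*^{-1})x\in F$.

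For part (2), I argue by contradiction: suppose $[z]\in\overline{F}\cap\partial\mathbb{H}^{p,q}$ and pick $x_n\in F$ with $x_n\to z$ in $A$, so $\langle z,z\rangle=0$. Passing the inequality $\langle x_n,\rho(\gamma)o-o\rangle\le 0$ to the limit yields $\langle z,\rho(\gamma)o\rangle\le\langle z,o\rangle$ for every $\gamma\in\Gamma$. Assuming $[z]\in\Lambda_\rho$, choose $\gamma_k\to\infty$ with $\rho(\gamma_k)o\to z$ in $A$; substituting gives $\langle z,\rho(\gamma_k)o\rangle\to\langle z,z\rangle=0$, hence $0\le\langle z,o\rangle$. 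On the other hand, the sign convention from (1) together with continuity forces $\langle z,o\rangle\le 0$, so $\langle z,o\rangle=0$. This contradicts the fact that $o\in\Omega_\rho$ avoids $\mathbb{P}(\langle z\rangle^\perp)$ for every $z\in\Lambda_\rho$, proving the claim.

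The principal obstacle I foresee is twofold. The first, bookkeeping issue, is the careful handling of signs and representatives across the chart $A$, the double cover $\widehat{\mathbb{H}}^{p,q}$, and projective limits — if the sign of $\langle\cdot,\cdot\rangle$ is not uniformly controlled on $\widehat{\Omega}_\rho\times\widehat{\Omega}_\rho$, the defining inequalities and the properness argument both become ambiguous. The more substantive point is justifying $\overline{F}\cap\partial\mathbb{H}^{p,q}\subseteq\Lambda_\rho$ used in part (2): one expects this from the maximality of $\Omega_\rho$ and the cocompactness of $\Gamma$ on $\overline{C}_\rho\cap\mathbb{H}^{p,q}$, but since $\partial\Omega_\rho$ could a priori accumulate on null points outside the limit set, this step may require an auxiliary lemma — for instance, showing that every $x_n\in F$ with $x_n\to[z]\in\partial\mathbb{H}^{p,q}$ forces $[z]$ to be approximable by $\rho(\gamma_k)o$ because otherwise one could already find a nearby $\xi\in\Lambda_\rho$ against which the inequality $\langle z,\xi\rangle\le\langle z,o\rangle$ yields the same contradiction.
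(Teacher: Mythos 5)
Part (1) of your proposal is correct and follows essentially the same route as the paper: convexity of $F$ as an intersection of half-spaces with the convex set $\Omega_\rho$, and orbit-meeting via properness of the Dirichlet function $\gamma\mapsto -\langle x,\rho(\gamma)o\rangle$, deduced from subconvergence of rescaled orbit points to a point of $\Lambda_\rho$ together with the fact that $\Omega_\rho$ avoids $\mathbb{P}(\langle\xi\rangle^\perp)$ for $\xi\in\Lambda_\rho$. The paper rescales by $\lambda_n>0$ with $\lambda_n\rho(\gamma_n)o\to\theta$ whereas you use the background Euclidean norm, but the substance is identical. The inclusion $\overline{F}\cap\partial\mathbb{H}^{p,q}\subseteq\Lambda_\rho$, which you flag as a possible auxiliary lemma, is supplied directly by the cited fact from Danciger--Gu\'eritaud--Kassel that $\overline{\Omega}_\rho\cap\partial\mathbb{H}^{p,q}=\Lambda_\rho$, together with $F\subset\Omega_\rho$; no extra work is needed there.

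Part (2) has a genuine gap. You pass the defining inequality to the limit to get $\langle z,\rho(\gamma)o\rangle\le\langle z,o\rangle$ for all $\gamma$, with $\rho(\gamma)o$ a representative normalized so that $\langle\rho(\gamma)o,\rho(\gamma)o\rangle=-1$, and then claim $\langle z,\rho(\gamma_k)o\rangle\to\langle z,z\rangle=0$ as $\rho(\gamma_k)o\to z$. But that convergence is only projective: since $z$ is null and $\rho(\gamma_k)o$ is unit timelike, the vectors $\rho(\gamma_k)o$ diverge in $\mathbb{R}^{p,q+1}$, and only after rescaling by some $\lambda_k\to 0$ does $\lambda_k\rho(\gamma_k)o\to z$. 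Passing the inequality through this rescaling gives $\langle z,\lambda_k\rho(\gamma_k)o\rangle\le\lambda_k\langle z,o\rangle$, which in the limit is just $0\le 0$ and yields no contradiction. In general $\langle z,\rho(\gamma_k)o\rangle$ need not tend to $0$ — it can stay bounded away from zero and negative — so you cannot conclude $\langle z,o\rangle=0$ this way. The paper's proof of this step avoids the degenerate ideal limit entirely: by convexity of $F$, the whole geodesic ray $[o,\theta]$ lies in $C_\rho\cap F$ and is spacelike; cocompactness of $\Gamma$ on $C_\rho$ produces points $x_n$ on this ray with $x_n\in\rho(\gamma_n)(K)$ for a fixed compact $K$, so $-\langle x_n,\rho(\gamma_n)o\rangle=-\langle \rho(\gamma_n)^{-1}x_n,o\rangle$ is uniformly bounded; the $F$-inequality then forces $-\langle x_n,o\rangle\le B$, contradicting $-\langle x_n,o\rangle=\cosh d_{\mathbb{H}^{p,q}}(x_n,o)\to\infty$ along the spacelike ray. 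You should replace your ideal-point limit by this argument at interior points of the ray.
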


\begin{proof}
Since $C_\rho$ is convex and, hence, simply connected, there exists a lift $\widetilde{C}_\rho$ of $C_\rho$ to the $2$-fold cover $\{ v \in \mathbb{R}^{p,q+1} \mid \langle v,v \rangle = -1\} \to \mathbb{H}^{p,q}$. Furthermore, every $\theta \in \Lambda_\rho$ admits a representative $\tilde{\theta}$ whose positive projective class is asymptotic to $\widetilde{C}_\rho$, in which case $\langle \bullet, \tilde{\theta} \rangle < 0$ on $\widetilde{C}_\rho$. With abuse of notation, we will not distinguish between points of $C_\rho$ and $\widetilde{C}_\rho$, or between $\theta$ and a representative $\tilde{\theta}$ verifying the conditions above. 

We prove the two properties separately.

\smallskip

\noindent {\bf Property (1).} Convexity follows from the fact that $\Omega_\rho$ is itself convex and each $F(\gamma)$ is a linear half space (hence convex). 

\smallskip

\begin{claim}{A}
	For any $x \in \Omega_\rho$, the set $\{-\langle x,\rho(\gamma) o\rangle\}_{\gamma\in\Gamma}$ has a minimum $m$.
\end{claim}

\begin{proof}[Proof of Claim A]
	
Suppose by contradiction that we have a sequence $\gamma_n\in\Gamma$ of pairwise distinct elements such that 
\[
-\langle x,\rho(\gamma_n)o\rangle\to m: = \inf_{\gamma\in\Gamma}{\{-\langle x,\rho(\gamma)o\rangle\}} \in [-\infty, +\infty) .
\]
As $\{\gamma_n\}_{n\in\mb{N}}$ contains infinitely many distinct elements, up to subsequences we can assume that $\rho(\gamma_n)o\to\theta$ for some $\theta\in\Lambda_\rho$, that is, there exists a sequence of positive numbers $\lambda_n>0$ such that $\lambda_n\,\rho(\gamma_n)o \to\theta$ in $\mb{R}^{p,q+1}$. Observe that $\lambda_n^2=-\langle \lambda_n\gamma_n (o),\lambda_n\rho(\gamma_n)o\rangle\to-\langle\theta,\theta\rangle=0$ and
\[
-\langle x,\lambda_n\rho(\gamma_n)o\rangle\to-\langle x,\theta\rangle>0 ,
\]
since $x\in\Omega_\rho$. Then we get
\[
m= \lim_{n \to \infty}(-\langle x,\rho(\gamma_n)o\rangle) = \lim_{n \to \infty}(-\langle x,\lambda_n\rho(\gamma_n)o\rangle/\lambda_n) = +\infty ,
\]
which is absurd. 
\end{proof}

By the claim, if $\gamma\in\Gamma$ denotes an element realizing the minimum $-\langle x,\rho(\gamma)o\rangle$, then $\rho(\gamma)^{-1}x\in F$, which proves that $F$ intersects the orbit of $x$ for any $x \in \Omega_\rho$.

\smallskip

\noindent{\bf Property (2).} By \cite{DGK}, the closure $\overline{\Omega}_\rho$ in $\mb{P}(\mb{R}^{p,q+1})$ intersects the boundary at infinity $\partial\mb{H}^{p,q}$ in the limit set $\overline{\Omega}_\rho\cap\partial\mb{H}^{p,q}=\Lambda_\rho$. Therefore, in order to show that $\overline{F}\subset\mb{H}^{p,q}$ it is enough to prove:

\begin{claim}{B}
	$\overline{F}\cap\Lambda_\rho=\emptyset$. 
\end{claim}

\begin{proof}[Proof of Claim B]
We argue by contradiction. Suppose this is not the case and pick $\theta\in\overline{F}\cap\Lambda_\rho$. By convexity of $F$, the geodesic ray $[o,\theta]$ is entirely contained in $C_\rho\cap F$. Observe that it is also spacelike by the definition of $\Omega_\rho$.

As $\Gamma$ acts cocompactly on $C_\rho$, there is a compact subset $K\subset C_\rho$ and a sequence $\gamma_n\in\Gamma$ with $\rho(\gamma_n)o\to\theta$ such that $\rho(\gamma_n)(K)\cap[o,\theta]\neq\emptyset$. Pick $x_n\in\rho(\gamma_n)(K)\cap[o,\theta]$. By compactness of $K$, we can find some $B>0$ such that $-\langle x,y\rangle\le B$ for every $y \in K$. In particular $-\langle x_n,\rho(\gamma_n)o\rangle\le B$ for every $n$. By the definition of $F$, we also have $-\langle x_n,o\rangle\le-\langle x_n,\rho(\gamma_n)o\rangle\le B$. However, since $x_n\to\theta$ along the spacelike ray $[o,\theta]$, we have $-\langle x_n,o\rangle\to + \infty$ which contradicts the previous observation and concludes the proof of the claim.
\end{proof}

This concludes the proof of the lemma.
\end{proof}

We use the fundamental domain $F$ to show that the volume and diameter of $\Omega_\rho/\rho(\Gamma)$ are both finite.

\begin{cor}
\label{cor:finite invariants}
We have ${\rm vol}_\rho={\rm vol}(\Omega_\rho/\rho(\Gamma)),{\rm diam}_\rho={\rm diam}(\Omega_\rho/\rho(\Gamma))<\infty$.
\end{cor}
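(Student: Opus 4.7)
The plan is to use the fundamental domain $F$ constructed in Lemma~\ref{lem:fundamental domain} as a yardstick for both invariants. By part (1) of that lemma, $F$ meets every $\rho(\Gamma)$-orbit in $\Omega_\rho$, so the restriction to $F$ of the quotient map $\Omega_\rho\to\mathsf{N}_\rho$ is surjective. Consequently, $\mathrm{vol}(\mathsf{N}_\rho)\le\mathrm{vol}(F)$, and every pair of points in $\mathsf{N}_\rho$ is represented by a pair of lifts in $F$. The finiteness claims will then follow from uniform estimates on $F$, which are made available by part (2) of the same lemma: $\overline{F}$ is compact inside $\mathbb{H}^{p,q}$.

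For the volume, $F$ is contained in the compact set $\overline{F}\subset\mathbb{H}^{p,q}$, so the smooth pseudo-Riemannian volume density $\abs{\det(g_{\mathbb{H}^{p,q}})}^{1/2}$ is bounded on $\overline{F}$. Integrating yields $\mathrm{vol}(F)<\infty$, whence $\mathrm{vol}_\rho<\infty$.

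For the diameter, let $x,y\in\Omega_\rho$ be space-related. By Lemma~\ref{lem:fundamental domain}(1) we can pick $\gamma_1,\gamma_2\in\Gamma$ with $x':=\rho(\gamma_1)x$ and $y':=\rho(\gamma_2)y$ in $F$, and since $\rho(\Gamma)$ acts by isometries of $d_{\mathbb{H}^{p,q}}$,
\[
\inf_{\gamma\in\Gamma} d_{\mathbb{H}^{p,q}}(x,\rho(\gamma)y) \;=\; \inf_{\gamma'\in\Gamma} d_{\mathbb{H}^{p,q}}(x',\rho(\gamma')y') \;\le\; d_{\mathbb{H}^{p,q}}(x',y').
\]
So it suffices to bound $d_{\mathbb{H}^{p,q}}$ uniformly on $F\times F$. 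Since $F$ is convex (hence simply connected), it admits a continuous lift $\widetilde{F}\subset\widehat{\mathbb{H}}^{p,q}$, and by Lemma~\ref{lem:fundamental domain}(2) together with properness of the double cover $\widehat{\mathbb{H}}^{p,q}\to\mathbb{H}^{p,q}$, the closure $\overline{\widetilde F}$ is compact in $\widehat{\mathbb{H}}^{p,q}$. The ambient form $\langle\bullet,\bullet\rangle_{p,q+1}$ is continuous, hence bounded on $\overline{\widetilde F}\times\overline{\widetilde F}$, which gives a uniform upper bound on $\mathrm{arccosh}(\abs{\langle\tilde x',\tilde y'\rangle})$ for space-related pairs (non-space-related pairs contribute $0$ by definition of $d_{\mathbb{H}^{p,q}}$). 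Thus $\mathrm{diam}_\rho<\infty$.

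The conceptual point worth flagging is that, unlike in the Riemannian setting, one cannot directly use metric compactness with respect to $d_{\mathbb{H}^{p,q}}$ (its balls are not compact). The fundamental domain $F$ bypasses this: its precompactness is established projectively, via the ambient continuity of $\langle\bullet,\bullet\rangle_{p,q+1}$ and the fact that $\overline{F}$ avoids the limit set, and these suffice to control both the pseudo-Riemannian volume density and the pseudo-distance uniformly. So the main substantive work has already been done in Lemma~\ref{lem:fundamental domain}, and the corollary is essentially a packaging of those two properties.
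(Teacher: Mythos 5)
Your proposal follows exactly the paper's route: invoke Lemma~\ref{lem:fundamental domain} to get the fundamental domain $F$ with compact closure $\overline{F}\subset\mathbb{H}^{p,q}$, deduce $\mathrm{vol}_\rho\le\mathrm{vol}(\overline{F})<\infty$, and bound $\cosh\mathrm{diam}_\rho$ by the maximum of $-\langle\bullet,\bullet\rangle$ on $\overline{F}\times\overline{F}$. The only difference is that you spell out some steps (lifting $F$ to the double cover, observing non-space-related pairs contribute $0$) that the paper leaves implicit; the substance is the same.
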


\begin{proof}
Let $F\subset\Omega_\rho$ be the fundamental domain provided by Lemma~\ref{lem:fundamental domain}. By Property (2), the closure $\overline{F}$ is a compact subset of $\mb{H}^{p,q}$. Hence ${\rm vol}(\Omega_\rho/\rho(\Gamma))\le{\rm vol}(\overline{F})<\infty$. Similarly, since $\overline{F}$ is compact and contained in $\mb{H}^{p,q}$, we have $\cosh{\rm diam}(\Omega_\rho/\rho(\Gamma))\le\max\{-\langle x,y\rangle|\;x,y\in\overline{F}\}<\infty$. 
\end{proof} 

\section{Volume, entropy, and topology}\label{sec:vol,entr,top}

In this section we prove Theorem~\ref{thm:main2}: We bound from below the product volume times entropy of a convex cocompact representation $\rho:\Gamma\to{\rm SO}(p,q+1)$ of the fundamental group of a closed $p$-manifold $\mathsf{M}$ by its simplicial volume, which is a topological invariant.

Let $\mathsf{M}_\rho$ denote the unique maximal spacelike $p$-submanifold of $\mathsf{N}_\rho = \Omega_\rho/\rho(\Gamma)$ provided by \cite{SST23}. We denote by $d_ M(\bullet,\bullet)$ the distance determined by the induced Riemannian metric of $M$, the universal cover of $\mathsf{M}_\rho$, and by
\[
h(\mathsf{M}_\rho):= \limsup_{R\to\infty}{\frac{\log\left|\{\gamma\in\Gamma \mid d_M(o,\gamma o)\le R\}\right|}{R}}
\]
the associated critical exponent. We need the following bound:

\begin{pro}
\label{pro:volume}
We have 
\[
\mu\cdot{\rm vol}(\mathsf{M}_\rho)\le{\rm vol}_\rho ,
\]
for some $\mu=\mu(p,q)>0$. 
\end{pro}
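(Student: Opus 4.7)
The plan is to exhibit a tubular neighborhood of $\mathsf{M}_\rho$ inside $\mathsf{N}_\rho$ whose normal thickness depends only on $(p,q)$, and to integrate the pseudo-Riemannian volume form over it. Working in the hyperboloid double cover $\widehat{\mathbb{H}}^{p,q}$ and lifting $M$ accordingly, let $NM \to M$ denote the normal bundle of $M$, whose fiber $N_xM = (T_xM)^\perp \cap T_x \widehat{\mathbb{H}}^{p,q}$ is $q$-dimensional and negative definite. Setting $N_\varepsilon M := \{(x,v) \in NM : -\langle v,v\rangle < \varepsilon^2\}$, the $\rho(\Gamma)$-equivariant normal exponential map takes the explicit form
\[
E(x,v) = \cos|v|\, x + \frac{\sin|v|}{|v|}\, v , \qquad |v| := \sqrt{-\langle v,v\rangle} .
\]

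The argument rests on establishing three properties with constants depending only on $(p,q)$, valid for all sufficiently small $\varepsilon$:
\begin{enumerate}[(a)]
\item (Jacobian bound) $E$ is an immersion on $N_\varepsilon M$, and $|\det dE| \geq c(p,q) > 0$ when we compare the product volume form on $NM$ (the Riemannian volume of $M$ wedged with the negative-definite volume form on fibers) with the pseudo-Riemannian volume form of $\widehat{\mathbb{H}}^{p,q}$.
\item (Injectivity modulo $\rho(\Gamma)$) $E$ descends to an injective map $\overline{E} : N_\varepsilon \mathsf{M}_\rho \to \widehat{\mathbb{H}}^{p,q}/\rho(\Gamma)$.
\item (Containment) $E(N_\varepsilon M) \subset \Omega_\rho$, so that $\overline{E}$ lands inside $\mathsf{N}_\rho$.
\end{enumerate}
Properties (a) and (b) follow from standard Jacobi field comparison applied to the timelike normal geodesics $t \mapsto E(x,tv)$, combining the explicit Riemann tensor of $\widehat{\mathbb{H}}^{p,q}$ from \eqref{eq:riemann hpq} with the uniform upper bound $\|\mathbb{I}\|_\infty \leq pq$ from property~\eqref{eq:ricci bound}; the distance comparison of Theorem~\ref{thm:distance comparison} helps convert separation of normal geodesics from distinct basepoints into quantitative control using the intrinsic Riemannian distance on $M$. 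For (c), one uses that $\Omega_\rho$ is the connected component of $\mathbb{H}^{p,q} \setminus \bigcup_{z \in \Lambda_\rho} \mathbb{P}(\langle z \rangle^\perp)$ containing $M$, together with the inequality $\langle x, z\rangle < 0$ valid for all $x \in M$ and all $z \in \Lambda_\rho$ with consistently chosen representatives (cf.\ the proof of Lemma~\ref{lem:fundamental domain}), to verify directly that the normal geodesic through $x$ does not cross any wall $\mathbb{P}(\langle z \rangle^\perp)$ before time $\varepsilon$.

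Granting (a)--(c), the desired estimate is immediate:
\[
\mathrm{vol}_\rho \geq \mathrm{vol}(\overline{E}(N_\varepsilon \mathsf{M}_\rho)) \geq c(p,q) \cdot \mathrm{vol}(B^q_\varepsilon) \cdot \mathrm{vol}(\mathsf{M}_\rho),
\]
with $\mu(p,q) := c(p,q) \cdot \mathrm{vol}(B^q_\varepsilon) > 0$. The main obstacle I foresee is the uniform version of (c): for each fixed $\rho$ the compactness of $\mathsf{M}_\rho$ yields some positive width $\varepsilon(\rho)$, but extracting $\varepsilon$ depending only on $(p,q)$ requires promoting $\langle x, z\rangle < 0$ to a quantitative inequality with a $\rho$-independent lower bound (after suitable normalization of the representatives $x, z$). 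I expect this to be achievable by exploiting the Fermi chart of Section~2, in which $M$ is the graph of a strictly $1$-Lipschitz function $u: \mathbb{D}^p \to \mathbb{S}^q$, reducing the question to a uniform transversality statement between the graph and the fibers of the projection $\pi: \widehat{\mathbb{H}}^{p,q} \to \mathbb{D}^p$.
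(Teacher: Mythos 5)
Your overall plan — exhibiting a uniform-width normal tube of $\mathsf{M}_\rho$ and integrating the pseudo-Riemannian volume form over it — is exactly the paper's strategy (the paper works with the unit normal bundle $N^1 M$ and the map $\nu(x,n;t) = \cos t\,x + \sin t\, n$, which is your $E$). Step~(a) is correct and matches the paper's explicit computation of $\nu^*\mathrm{vol}_{\mathbb{H}^{p,q}} = \det(\cos t\,\mathrm{id} + \sin t\, B_n)^{1/2}\,\mathrm{vol}_{\mathbb{S}^q} \wedge \mathrm{vol}_M$, combined with the eigenvalue bound $|\lambda_i(B_n)| \le \norm{\mathbb{I}}_\infty \le pq$. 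However, the tools you propose for (b) and (c) would not close the argument.

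For (c), you correctly locate the obstacle, but the Fermi-chart transversality you hope for is not self-contained. The ``strictly $1$-Lipschitz'' property holds for \emph{every} complete spacelike graph and carries no uniform modulus; a uniform gap is special to \emph{maximal} submanifolds, and it is precisely the content of the gradient bound Theorem~\ref{thm:main3}. In the form of Theorem~\ref{thm:restatement_thm:main3}(2), it yields $0 \le -\langle \tilde\theta_y^N, \tilde\theta_y^N\rangle \le (p-1)\langle\tilde\theta,y\rangle^2$ for every $\theta \in \Lambda_\rho$ and $y\in M$, which, via Cauchy--Schwarz in the negative-definite normal space, gives $\langle\tilde\theta, E(y,v)\rangle / \langle\tilde\theta,y\rangle \ge \cos t - \sqrt{p-1}\,\sin t > 0$ for $t < \arctan(1/\sqrt{p-1})$ — the paper's Lemma~\ref{lem:intorno nel dominio}. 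Your route would eventually have to rediscover this estimate.

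For (b), the gap is more serious. Jacobi field comparison (equivalently, the Jacobian nondegeneracy from (a)) gives only \emph{local} injectivity; it does not rule out two far-apart basepoints $x\neq y$ having normal geodesics that meet. The distance comparison of Theorem~\ref{thm:distance comparison} cannot repair this: $d_{\mathbb{H}^{p,q}}$ is by definition zero between non--space-related points, so the timelike legs $t\mapsto E(x,tv)$ are invisible to it, and there is no triangle inequality available for a hinge consisting of two timelike segments and a spacelike chord in $M$. The paper's Lemma~\ref{lem:embedding} is a genuinely global, topological argument: for $u = E(x,v)$ one studies the height function $f_u(y) = -\langle u, y\rangle$ on $M$, shows it is proper (again via Theorem~\ref{thm:restatement_thm:main3}, so that $f_u\to +\infty$ at $\partial M$), shows every critical point with value $>\cos r$ is a strict local minimum (via \eqref{eq:hessian} and the bound on $\norm{\mathbb{I}}_\infty$), and concludes by the mountain pass theorem that $x$ is the \emph{unique} global minimum — from which injectivity on $N^1 M\times(0,r)$ follows. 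Without such a global uniqueness argument, your step (b) does not go through.
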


Let us briefly prove Theorem~\ref{thm:main2} assuming the proposition above.

\begin{proof}[Proof of Theorem~\ref{thm:main2}]
By Theorem~\ref{thm:distance comparison} and the definition of $\delta_\rho$, we get
\[
h(\mathsf{M}_\rho)\le\frac{1}{\sqrt{p}}\cdot\delta_\rho.
\]
(Compare with Proposition~\ref{pro:finite vol diam}.) Recall that the Riemannian metric $g$ on $\mathsf{M}_\rho$ satisfies ${\rm Ric}_ g\ge-(p-1) \, g$. Hence, by work of Gromov (see \cite{Gr}*{Section~2.5}), we have 
\[
h(\mathsf{M}_\rho)^p\ge\kappa \, \norm{\mathsf{M}}/{\rm vol}(\mathsf{M}_\rho) ,
\]
where $\kappa=\kappa(p)>0$ is a dimensional constant and $\norm{\mathsf{M}}$ is the simplicial volume of $\mathsf{M}$. It follows that 
\[
\delta_\rho^p\ge p^{p/2}\kappa\cdot\frac{\norm{\mathsf{M}}}{{\rm vol}(\mathsf{M}_\rho)}.
\]

By Proposition~\ref{pro:volume}, we conclude
\[
\delta_\rho^p\ge \frac{p^{p/2}\kappa}{\mu}\cdot\frac{\norm{\mathsf{M}}}{{\rm vol}_\rho}.
\] 

This concludes the proof of the theorem.
\end{proof}

It remains to prove the proposition.

\subsection{Proof of Proposition~\ref{pro:volume}}

Let
\begin{align*}
	N M & := \{ (x,w) \in T \mathbb{H}^{p,q} \mid x \in M, \, w \in (T_x M)^\perp \} , \\
	N^1 M & := \{ (x,n) \in N M \mid \langle n,n \rangle = -1 \} ,
\end{align*}
denote the normal bundle (resp. unit normal bundle) of $M\subset\mb{H}^{p,q}$, the unique $\rho$-invariant, maximal, spacelike $p$-submanifold of $\mb{H}^{p,q}$ and consider the restriction of the exponential map $\nu:N M\to\mb{H}^{p,q}$. We work with coordinates $(x,n;t)$, where $t \in (0,\pi/2)$ and $(x,n) \in N^1 M$. Using the connection $D$ of $\mathbb{H}^{p,q}$, we can describe the tangent space $T_{(x,n)}(N^1 M)$ as
\[
T_{(x,n)}(N^1 M) \cong \{ (\dot{x}, \dot{n}) \mid \dot{x} \in T_x M, \dot{n} \in \langle n \rangle^\perp \cap N_x M \}
\]

In such coordinates $(x,n;t) \in N^1 M \times (0,\infty)$, the pull-back metric $\nu^*g_{\mb{H}^{p,q}}$ can be written as
\begin{align*}
	\nu^* g_{\mathbb{H}^{p,q}}|_{(x,n;t)}&((\dot{x}, \dot{n},\dot{t}) , (\dot{x}, \dot{n},\dot{t})) = - \dot{t}^2 - \sin^2 t \cdot {g_{\mathbb{S}^{q-1}_x}}|_n(\dot{n}, \dot{n}) \\
	&\quad + g_ M|_x((\cos t \, \mathrm{id} + \sin t \, B_n)\dot{x},(\cos t \, \mathrm{id} + \sin t \, B_n)\dot{x}) ,
\end{align*}
where $g_{\mathbb{S}^{q-1}_x}$ denotes the standard (positive definite) metric on the unit sphere $\mathbb{S}^{q-1}_x : = N^1_x M$. It follows that the volume form $\nu^* g_{\mathbb{H}^{p,q}}$ can be expressed as
\begin{align*}
	\nu^*\mathrm{vol}_{\mathbb{H}^{p,q}}|_{(x,n;t)} & = (\sin t)^{(q-1)/2} \, \det(\cos t \, \mathrm{id} + \sin t \, B_n)^{1/2} \dd{t} \wedge \mathrm{vol}_{\mathbb{S}_x^{q-1}} \wedge \mathrm{vol}_M \\
	& =  \det(\cos t \, \mathrm{id} + \sin t \, B_n)^{1/2} \, \mathrm{vol}_{\mathbb{S}^q} \wedge \mathrm{vol}_{M} ,
\end{align*}
where in the second equality we observed that the form $(\sin t)^{(q-1)/2} \, \dd{t} \wedge \mathrm{vol}_{\mathbb{S}_x^{q-1}}$ coincides with the volume form of the unit $q$-sphere $\mathbb{S}_x^q \subset (T_x M)^\perp$. From this identity, we deduce that $\nu$ is a local diffeomorphism on $N^1 M \times (0,r)$, for any $r$ that satisfies
\[
\frac{1}{\tan r} \geq \norm{\mathbb{I}}_\infty : = \sup\{\norm{\mathbb{I}_x(\dot{x},\dot{x})} \mid  x \in M, \, \dot{x} \in T_x M, \norm{\dot{x}} = 1 \} .
\]

A priori, the image $\nu(N^1 M \times (0,r))$ may not be contained inside the open convex domain $\Omega_\rho$. To establish this, we observe:

\begin{lem}\label{lem:intorno nel dominio}
	Let $M$ be a complete, maximal, spacelike $p$-submanifold of $\mathbb{H}^{p,q}$. Then the image $\nu(N^1 M \times (0,r))$ is contained in the domain of discontinuity $\Omega_\rho$ for any $r \leq \arctan(1/\sqrt{p-1})$. 
\end{lem}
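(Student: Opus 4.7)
The plan is to show that, for $t \in (0, r)$ with $r := \arctan(1/\sqrt{p-1})$, the point $\nu(x,n;t)$ both avoids every hyperplane $\mathbb{P}(\langle \theta\rangle^\perp)$ as $\theta$ varies in $\partial M = \Lambda_\rho$, and remains connected to $M$ through a path that stays in their common complement. Since $\Omega_\rho$ is by definition the connected component of $\mathbb{H}^{p,q}\setminus\bigcup_{\theta\in\Lambda_\rho}\mathbb{P}(\langle \theta\rangle^\perp)$ containing $C_\rho \supset M$, this will be enough.

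First, I would lift $M \cup \partial M$ to $\widehat{\mathbb{H}}^{p,q} \cup \partial\widehat{\mathbb{H}}^{p,q}$ and choose representatives $\tilde\theta$ of each $\theta \in \partial M$ satisfying $\langle x, \tilde\theta\rangle < 0$ for every $x \in M$. In the sphere cover, the normal geodesic through $(x,n) \in N^1 M$ is $\nu(x,n;t) = \cos(t)\, x + \sin(t)\, n$ (since $n$ is a timelike unit vector), so that
\[
\langle \nu(x,n;t), \tilde\theta\rangle = \cos(t)\, \langle x, \tilde\theta\rangle + \sin(t)\, \langle n, \tilde\theta\rangle .
\]
The decisive step is to bound $|\langle n, \tilde\theta\rangle|$ in terms of $|\langle x, \tilde\theta\rangle|$. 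Splitting $\tilde\theta = \tilde\theta^T + \tilde\theta^N - \langle x, \tilde\theta\rangle\, x$ along $T_x M \oplus \bigl((T_x M)^\perp \cap T_x \widehat{\mathbb{H}}^{p,q}\bigr) \oplus \langle x\rangle$, note that $n$ annihilates $\tilde\theta^T$ and $x$, so only the normal component contributes. Applying the reverse Cauchy--Schwarz inequality on the negative definite fiber $(T_x M)^\perp \cap T_x \widehat{\mathbb{H}}^{p,q}$ together with Theorem~\ref{thm:restatement_thm:main3}(2) yields
\[
\langle n, \tilde\theta\rangle^2 = \langle n, \tilde\theta^N\rangle^2 \leq -\langle \tilde\theta^N, \tilde\theta^N\rangle \leq (p-1)\, \langle x, \tilde\theta\rangle^2 .
\]

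For $t \in (0, r)$ this estimate gives $|\sin(t)\, \langle n, \tilde\theta\rangle| < \cos(t)\, |\langle x, \tilde\theta\rangle|$, so $\langle \nu(x,n;s), \tilde\theta\rangle$ has the same (negative, nonzero) sign as $\langle x, \tilde\theta\rangle$ for every $s \in [0, t]$. Consequently the whole segment $s \mapsto \nu(x,n;s)$ lies in the half-space $\{y : \langle y, \tilde\theta\rangle < 0\}$ for each $\theta \in \partial M$, and in particular in the complement of $\bigcup_{\theta}\mathbb{P}(\langle \theta\rangle^\perp)$. As this segment joins $x \in M \subset C_\rho$ to $\nu(x,n;t)$, the endpoint lies in the same connected component of the complement as $C_\rho$, which is $\Omega_\rho$.

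The main obstacle is the quantitative bound in the displayed inequality: without the sharp value $\sqrt{p-1}$ coming from Theorem~\ref{thm:restatement_thm:main3}(2), one could not identify the precise threshold $r = \arctan(1/\sqrt{p-1})$. Once that ratio is pinned, the choice of $r$ is forced and the remainder is a soft connectedness argument; the only bookkeeping required is to consistently work with the sphere-cover representatives so that $\langle x, \tilde\theta\rangle < 0$ throughout.
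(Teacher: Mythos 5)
Your proof is correct and follows essentially the same route as the paper's: lift to the double cover, write $\nu(x,n;t)=\cos(t)\,x+\sin(t)\,n$, decompose $\tilde\theta$ along $T_xM \oplus (T_xM)^\perp\cap T_x\widehat{\mathbb H}^{p,q} \oplus \langle x\rangle$, combine Theorem~\ref{thm:restatement_thm:main3}(2) with Cauchy--Schwarz on the negative-definite normal fiber to get $\lvert\langle n,\tilde\theta\rangle\rvert \le \sqrt{p-1}\,\lvert\langle x,\tilde\theta\rangle\rvert$, and conclude by a connectedness argument. The only cosmetic difference is that you track the sign of $\langle\nu(x,n;s),\tilde\theta\rangle$ along the segment $s\mapsto\nu(x,n;s)$, whereas the paper notes that $\nu(N^1M\times(0,r))$ is connected, avoids every hyperplane $\mathbb P(\tilde\theta^\perp)$, and has $M\subset\Omega_\rho$ in its closure; both are fine.
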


\begin{proof}
	As argued in the proof of Lemma~\ref{lem:fundamental domain}, the convex hull of the limit set $C_\rho$ admits a lift $\widetilde{C}_\rho$ to the $2$-fold cover $\{v \in \mathbb{R}^{p,q+1} \mid \langle v, v \rangle = -1 \}$. In particular, $M$ admits a lift inside $\widetilde{C}_\rho$, which we continue to denote with abuse by $M$. For any point $\theta$ in the limit set $\Lambda_\rho$, we select a representative $\tilde{\theta}$ such that $\langle \bullet, \tilde{\theta} \rangle < 0$ on $\widetilde{C}_\rho $. With these conventions, the maximal, open, convex domain of discontinuity $\Omega_\rho \supset C_\rho$ can be described as
	\[
	\Omega_\rho : = \mathbb{P}\left( \bigcap_{\theta \in \Lambda_\rho} \{ v \in \mathbb{R}^{p,q+1} \mid \langle v, \tilde{\theta} \rangle  < 0 \} \right)
	\]
	(See e.g. \cite{DGK}.) Being $\nu(N^1 M \times (0,r))$ connected and $M \subset \Omega_\rho$, it is enough to prove that, for any $u \in \nu(N^1 M \times (0,r))$ and for any $\theta \in \Lambda_\rho$, we have $\langle y , \tilde{\theta}\rangle \neq 0$. 
	
	To see this, let $u = \cos t \, y + \sin t \, n$, for some $y \in M \subset \widetilde{C}_\rho$, $n \in N_{x}^1 M$, and $t \in (0,r)$. Then we write
	\[
	\frac{\langle \tilde{\theta}, u \rangle}{\langle \tilde{\theta}, y \rangle} = \cos t + \sin t \, \frac{\langle \tilde{\theta}, n\rangle}{\langle \tilde{\theta}, y \rangle} .
	\]
	
	By Theorem~\ref{thm:restatement_thm:main3}, the orthogonal projection $\tilde{\theta}^N_y$ of $\tilde{\theta}$ onto $N_y M$ satisfies 
	\[
	0 \leq - \langle \tilde{\theta}^N_y, \tilde{\theta}^N_y \rangle \leq (p - 1) \, \langle \tilde{\theta}, y \rangle^2 .
	\]
	This bound, combined with the Cauchy-Schwarz inequality (recall that $N_y M$ is negative definite), implies
	\[
	\frac{\langle \tilde{\theta}, u \rangle}{\langle \tilde{\theta}, y \rangle} \geq \cos t - \sin t \, \sqrt{- \langle \tilde{\theta}^N_y, \tilde{\theta}^N_y \rangle} \sqrt{- \langle n, n \rangle} \geq \cos t - (p - 1)^{1/2} \, \sin t .
	\]
	Under our hypotheses, the right hand side is strictly positive, implying in particular that $\langle \tilde{\theta}, u \rangle \neq 0$. By letting $u$ vary in $\nu(N^1 M \times (0,r))$, we deduce the desired statement.
\end{proof}

It remains to establish sufficient conditions for $\nu$ to be injective on $N^1 M \times (0,r)$.

\begin{lem}
	\label{lem:embedding}
	Let $M$ be a complete, maximal, spacelike $p$-submanifold of $\mathbb{H}^{p,q}$ with second fundamental form $\mathbb{I}$. Then the map $\nu: N^1 M \times (0,r) \to \Omega_\rho$ is injective, where
	\[
	r = \arctan\left(\frac{1}{\max \{\sqrt{p - 1}, \norm{\mathbb{I}}_\infty\}}\right) .
	\]
\end{lem}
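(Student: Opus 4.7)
The plan is to argue by contradiction and reduce the problem to a Morse-theoretic statement on $M$. Assume there exist two distinct triples $(x_i, n_i, t_i) \in N^1 M \times (0, r)$, $i = 1, 2$, with $\nu(x_1, n_1, t_1) = \nu(x_2, n_2, t_2) =: u$. Lifting $M$ into $\widehat{\mathbb{H}}^{p,q}$ as in the proof of Lemma~\ref{lem:intorno nel dominio}, consider the function
\[
\phi_u : M \longrightarrow \mathbb{R}, \qquad \phi_u(y) := -\langle u, y \rangle .
\]
A direct computation shows that $\nu(x, n, t) = u$ for some $(x, n, t) \in N^1 M \times (0, \pi/2)$ if and only if $x$ is a critical point of $\phi_u|_M$ with $\phi_u(x) = \cos t \in (0, 1)$: the decomposition $u = \cos t \cdot x + \sin t \cdot n$ is recovered as $u = \phi_u(x) x + u^N_x$ with $\sin t = |u^N_x|$ and $n = u^N_x/|u^N_x|$. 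Moreover, the identity $\langle u, u \rangle = -1$ combined with the negative-definiteness of $N_y M$ forces every critical value of $\phi_u|_M$ to lie in $[-1, 1]$. Under our assumption, $x_1$ and $x_2$ are therefore distinct critical points of $\phi_u|_M$ with $\phi_u(x_i) = \cos t_i \in (\cos r, 1)$.

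Next, one computes the Hessian of $\phi_u|_M$ by an argument parallel to Proposition~\ref{prop:hessian of busemann}, obtaining
\[
\nabla^2 \phi_u|_y(V, W) = \phi_u(y) \cdot g_M(V, W) - \langle u^N_y, \mathbb{I}(V, W) \rangle .
\]
At the critical point $x_i$, the relation $u^N_{x_i} = \sin t_i \cdot n_i$ reduces the Hessian to the operator $\cos t_i \cdot \mathrm{id} - \sin t_i \cdot B_{n_i}$ on $T_{x_i} M$. Since the eigenvalues of $B_{n_i}$ are bounded in absolute value by $\norm{\mathbb{I}}_\infty \leq 1/\tan r$ and $t_i < r$, each eigenvalue of this operator is bounded below by $\cos t_i - \sin t_i \, \norm{\mathbb{I}}_\infty > 0$. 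Hence $x_1$ and $x_2$ are strict local minima of $\phi_u|_M$ with values in $(\cos r, 1)$.

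The contradiction then follows from a mountain-pass / min-max argument applied to $\phi_u|_M$. The value
\[
c := \inf \bigl\{ \max_{s \in [0,1]} \phi_u(\gamma(s)) \bigm| \gamma : [0,1] \to M \text{ continuous}, \gamma(0) = x_1, \gamma(1) = x_2 \bigr\}
\]
satisfies $c \geq \max(\phi_u(x_1), \phi_u(x_2)) > \cos r$, and since all critical values of $\phi_u|_M$ lie in $[-1, 1]$, we deduce $c \in (\cos r, 1]$. If $c < 1$, the corresponding mountain-pass critical point would have $\phi_u$-value in $(\cos r, 1)$ and hence, by the Hessian computation above, positive-definite Hessian, contradicting its saddle nature. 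The borderline case $c = 1$ is excluded because a critical point $y$ with $\phi_u(y) = 1$ would force $u^N_y = 0$ and thus $u = y \in M$, incompatible with $u$ lying in the open tube strictly off $M$.

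The main obstacle I anticipate is justifying this min-max scheme rigorously on the complete, non-compact manifold $M$: one must verify a Palais-Smale-type compactness condition for $\phi_u|_M$ along sequences in the relevant sublevel sets. The explicit identity $|u^N_y|^2 = |\nabla_M \phi_u|_y|^2 + 1 - \phi_u(y)^2$, combined with the bounds from Theorem~\ref{thm:restatement_thm:main3} and the compactness criterion of Proposition~\ref{prop:compactness maximal}, provides the natural tools to secure this condition. An alternative route, bypassing the variational framework altogether, would be to directly establish strict convexity of $\phi_u$ along a judiciously chosen path joining $x_1$ to $x_2$ in $M$, and conclude from the one-dimensional observation that a strictly convex function on an interval cannot have vanishing derivative at both endpoints.
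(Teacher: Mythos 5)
Your proposal follows essentially the same route as the paper's: introduce the linear functional $f_u(y) = -\langle u, y\rangle$ on the lift of $M$, observe that its critical points are exactly the feet $y$ with $u \in (T_yM)^\perp$, compute its Hessian via the second fundamental form, show that all critical points at level $> \cos r$ are strict local minima, and then derive a contradiction from the existence of two local minima via a mountain-pass/min-max argument on $M \cong \mathbb{R}^p$. Your Hessian formula and the bound on its eigenvalues are correct, as is the observation that all critical values of $f_u$ lie in $[-1,1]$.

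The genuine gap is the one you flag yourself and do not close: the min-max scheme requires a compactness input, and you only gesture at it. In the paper this is supplied by a concrete coercivity estimate, namely
\[
f_u(y) \;\geq\; (\cos t - \sqrt{p-1}\,\sin t)\,\cosh d_{\mathbb{H}^{p,q}}(x,y),
\]
obtained from Theorem~\ref{thm:restatement_thm:main3} by projecting $y$ onto $N_xM$ and applying Cauchy--Schwarz in the (negative-definite) normal bundle; since $d_{\mathbb{H}^{p,q}}(x,y) \geq d_M(x,y)$ (Lemma~\ref{lem:distance}), this shows $f_u$ is proper, so the mountain-pass theorem applies directly and Palais--Smale is automatic. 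Crucially, this coercivity estimate is the sole place where the constraint $r \leq \arctan(1/\sqrt{p-1})$ enters; your Hessian computation only uses $r \leq \arctan(1/\norm{\mathbb{I}}_\infty)$, so as written your argument does not explain why $r$ must involve $\sqrt{p-1}$, and the identity $\abs{u^N_y}^2 = \abs{\nabla\phi_u}^2 + 1 - \phi_u(y)^2$ by itself does not deliver properness. A secondary, more cosmetic point: the paper invokes Pucci--Serrin to guarantee the mountain-pass critical point can be taken to be a saddle or local maximum rather than a minimum, a technical step your sketch elides; and your separate treatment of the case $c = 1$ is unnecessary, since the Hessian lower bound $(f_u(y) - \sqrt{1-f_u(y)^2}\,\norm{\mathbb{I}}_\infty)\,g_M$ remains strictly positive at $f_u(y) = 1$.
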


The strategy for the proof of this statement was suggested to us by Timoth\'e Lemistre, we are grateful to him for allowing us to use it here.

\begin{proof}[Proof of Lemma~\ref{lem:embedding}]
	As in Lemma~\ref{lem:intorno nel dominio}, we select a lift of $M$ to $\{v \in \mathbb{R}^{p,q+1} \mid \langle v, v \rangle = -1 \}$, which we continue to denote with abuse by $M$. Any $u \in \nu(N^1 M \times (0,r))$ can be expressed as $u = \cos t \, x + \sin t \, n$, for some $x \in M$, $n \in N^1_x M$, and $t \in (0,r)$. Then we define
	\[
	f_u(y) = - \langle u, y \rangle = -  \cos t \, \langle x , y \rangle - \sin t \, \langle n , y \rangle , \quad y \in M.
	\]
	
	Notice that $y \in M$ is a critical point of $f_u : M \to \mathbb{R}$ if and only if $u \in (T_y M)^\perp$. We clearly have that $x$ is a critical point for $f_u$, and $f_u(x) = \cos t > \cos r$. In fact, we will show:
	
	\begin{claim}{A}\label{claim:unique min}
	The point $x$ is the unique global minimum of $f_u$.
	\end{claim}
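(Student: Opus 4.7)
My plan is to combine a Hessian computation at $x$, coercivity of $f_u$, and a mountain-pass argument to pin down $x$ as the unique global minimum. The main obstacle I expect is treating potentially degenerate critical points.

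I would first verify that $x$ is a critical point: for $V \in T_x M$ we have $(\dd f_u)_x(V) = -\cos t\,\langle x, V\rangle - \sin t\,\langle n, V\rangle = 0$ because $T_x M \subset x^{\perp}$ and $n \in N_x M$. The ambient identities $D^0_V W = D_V W + \langle V, W\rangle y$ and $D_V W = \nabla_V W + \mathbb{I}(V, W)$ give, at any $y \in M$, the Hessian formula
\[ (\nabla^2 f_u)_y(V, W) = f_u(y)\,\langle V, W\rangle - \langle B_{u^N_y} V, W\rangle, \]
where $u^N_y$ is the $N_y M$-component of $u$. At $x$ one has $u^N_x = \sin t\, n$, so $(\nabla^2 f_u)_x = \cos t\,\mathrm{id} - \sin t\, B_n$. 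Since $\|B_n\|_{\rm op} \le \|\mathbb{I}\|_\infty$ and $t < r \le \arctan(1/\|\mathbb{I}\|_\infty)$, this Hessian is strictly positive definite, so $x$ is a strict local minimum with $f_u(x) = \cos t$. Taking the trace and using maximality ($\mathrm{tr}_M\,\mathbb{I} = 0$) yields the global identity $\Delta f_u = p\, f_u$ on $M$.

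Coercivity follows from Fermi coordinates $\mathbb{D}^p \times \mathbb{S}^q \to \widehat{\mathbb{H}}^{p,q}$: the closure $M \cup \partial M$ is the graph of a $1$-Lipschitz map, so $\partial M \subset \Lambda_\rho$; since $u \in \Omega_\rho$ by Lemma~\ref{lem:intorno nel dominio}, the sign convention of that lemma gives $\langle u, \tilde\theta\rangle < 0$ for every $\theta \in \partial M$, whence $f_u(y) = -\langle u, y\rangle \to +\infty$ along any sequence escaping compact sets of $M$. Thus $f_u$ attains its infimum at some critical point $y_0$, and the strong maximum principle applied to $\Delta f_u - p f_u = 0$ ensures $f_u > 0$ on $M$. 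The key step is then a mountain-pass argument: if $y_0 \ne x$, strict local minimality at $x$ makes $\{x\}$ a connected component of the compact sublevel set $\{f_u \le \cos t\}$, disjoint from the component containing $y_0$, and applying the mountain-pass lemma to $f_u$ (which satisfies Palais--Smale by coercivity) yields a critical point $z^*$ with $f_u(z^*) > \cos t$ of Morse index at most $1$. A local maximum is excluded by $f_u(z^*) > 0$ combined with $\Delta f_u(z^*) = p\,f_u(z^*) > 0$. At a critical point with indefinite Hessian, writing $u = f_u(z^*)\,z^* + w_{z^*}$ with $\|w_{z^*}\|^2 = 1 - f_u(z^*)^2$ and using $\|B_{w_{z^*}}\|_{\rm op} \le \|\mathbb{I}\|_\infty\sqrt{1 - f_u(z^*)^2}$, the condition $\lambda_{\max}(B_{w_{z^*}}) > f_u(z^*)$ forces
\[ f_u(z^*) < \frac{\|\mathbb{I}\|_\infty}{\sqrt{1 + \|\mathbb{I}\|_\infty^2}} = \cos\bigl(\arctan(1/\|\mathbb{I}\|_\infty)\bigr) < \cos t, \]
contradicting $f_u(z^*) > \cos t$.

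The main obstacle is the possibility that $y_0$ or $z^*$ has a merely positive semi-definite Hessian, so the clean dichotomy ``strict local minimum vs.\ saddle'' breaks down. I would address this by further exploiting the Cauchy--Schwarz estimate
\[ f_u(y) \ge \cos t\,\cosh d_{\mathbb{H}^{p,q}}(x,y) - \sqrt{p-1}\,\sin t\,\sinh d_{\mathbb{H}^{p,q}}(x,y), \]
obtained by applying Cauchy--Schwarz in the negative definite subspace $N_x M$ together with Theorem~\ref{thm:restatement_thm:main3}(1), after writing $y = \cosh d\,x + \sinh d\, V$ with $V$ a spacelike unit tangent to $\mathbb{H}^{p,q}$ at $x$. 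This confines candidate global minima to a controlled neighbourhood of $x$, and after perturbing $f_u$ by a small multiple of the $M$-distance squared to $x$ to make all critical points nondegenerate, the Morse-theoretic mountain-pass argument above goes through and passes to the limit.
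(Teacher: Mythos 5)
Your proposal follows the same strategy as the paper's proof: establish that $x$ is a strict local minimum (via a Hessian computation with $(\nabla^2 f_u)_y(V,V) = f_u(y)\langle V,V\rangle - \langle B_{u^N_y}V,V\rangle$), establish properness of $f_u$, and then use a mountain-pass argument to rule out a second global minimum. Your observation that maximality gives $\Delta f_u = p\,f_u$, whence $f_u > 0$ by the strong maximum principle and no local maxima can exist, is a nice streamlining. Your coercivity derivation (via Lemma~\ref{lem:intorno nel dominio} and the sign condition defining $\Omega_\rho$) is valid, though less self-contained than the paper's, which deduces $f_u(y) \ge (\cos t - \sqrt{p-1}\sin t)\cosh d_{\mathbb{H}^{p,q}}(x,y)$ directly from Theorem~\ref{thm:restatement_thm:main3}.

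However, there is a genuine gap in the mountain-pass step. The standard mountain-pass theorem gives a critical point $z^*$ at the min-max level, but it does not assert ``Morse index at most $1$.'' And even if that claim held, it would be useless here: a nondegenerate local minimum has Morse index $0 \le 1$, so no contradiction would follow. What you actually need is the refinement due to Pucci and Serrin (\cite{PucciSerrin}, Corollary~2, which the paper cites): the critical set at the mountain-pass level contains a point that is a saddle or local maximum, i.e.\ \emph{not} a local minimum. Combined with the Hessian estimate showing every critical point $y$ with $f_u(y) > \cos r$ has strictly positive definite Hessian (hence is a strict local minimum), this yields the contradiction. Your worry about degenerate critical points and the proposed fix (perturbing by $\epsilon\,d_M(\cdot,x)^2$) are both misdirected: the Hessian estimate already rules out degenerate critical points at level $> \cos r$, so the obstruction is not degeneracy but the possibility that all critical points at the mountain-pass level are nondegenerate local minima. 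Adding $\epsilon\,d_M^2$ only makes Hessians more positive (reinforcing local minima rather than excluding them), does not generically make $f_u$ Morse, and in any case does not address the structure of the critical set. Replace the ``Morse index $\le 1$'' claim with the Pucci--Serrin structure theorem (or, equivalently, with the elementary Morse-theoretic observation that attaching only $0$-handles to $\{f_u < c\}$ while crossing the level $c$ cannot merge the component of $x$ with the component of $y_0$, contradicting connectedness of $\{f_u < c+\varepsilon\}$).
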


Assuming Claim A, we can deduce that $\nu$ is injective on $N^1 M \times (0,r)$: Let $x, y \in M$, $n \in N_x M$, $m \in N_y M$, and $t, s \in (0,r)$ be such that
	\[
	u = \cos t \, x + \sin t \, n = \cos s \, y + \sin s \, m .
	\]
	Then both $x$ and $y$ would be points of global minima for $f_u$. By the uniqueness part of Claim~\ref{claim:unique min}, we must have $x = y$ and $\cos t = \cos s$ (so $t = s$, since $t, s < r \leq \frac{\pi}{4})$ and, consequently, $(x,n;t) = (y,m;s) \in N^1 M \times (0,r)$.
	
	In order to prove Claim A, we need the following properties:
	\begin{enumerate}
		\item $f_u(y)$ tends to $+ \infty$ as $y$ goes towards $\partial M \subset \partial \mathbb{H}^{p,q}$.
		\item Every critical point $y$ of $f_u$ that satisfies $f_u(y) > \cos(r)$ is a strict local minimum.
	\end{enumerate}
	
	\begin{proof}[Proof of Claim A]
	 Recall that $x$ is a point of strict local minimum. In particular there exists a small disk $D_x\subset M$ around $x$ with $f_u>f_u(x)$ on $\partial D_x$. Assume there exists another point $y\in M-D_x$ with $f_u(y)\le f_u(x)$. By the mountain pass theorem (see \cite{Evans}*{Section~8.5, Theorem~2}), which we can apply as $f_u:M\to\mb{R}$ is a proper (by Property (1)) smooth function on $M$ (which is diffeomorphic to $\mb{R}^p$), there exists a critical point $z\in M$ with 
	\[
	f_u(z)=\inf_{\gamma:[0,1]\to M,\;\gamma(0)=x,\gamma(1)=y}{\max\{f_u\gamma\}}\ge f_u(x)=\cos(t).
	\]
	
	Furthermore, by work of Pucci and Serrin \cite{PucciSerrin}*{Corollary~2} on the structure of the critical set relative to the critical value $f_u(z)$, we can assume that $z$ is either a saddle or a local maximum.
	
	However, since $f_u(z)\ge f_u(x)=\cos(t)>\cos(r)$, by Property (2), we must have that $z$ is a point of strict local minimum. This gives a contradiction and shows that $x$ is the unique global minimum of $f_u$.
	\end{proof}
	
	We now focus on the proofs of Properties (1) and (2). We proceed following their order. To determine a bound from below of $f_u$, we observe:
		\begin{align*}
			f_u(y) & = \cos t \cosh d_{\mathbb{H}^{p,q}}(x,y) + \sin t \, \langle x, n \rangle \\
			& \geq \cos t \cosh d_{\mathbb{H}^{p,q}}(x,y) - \sin t \, \abs{\langle y_x^N, n \rangle} ,
		\end{align*}
		where $y_x^N$ denotes the orthogonal projection of $y$ onto $N_x M$. Recalling the estimates from Theorem~\ref{thm:restatement_thm:main3} and combining it with the Cauchy-Schwarz inequality on $N_x M$ (which is negative definite), we obtain:
		\begin{align*}
			f_u(y) - \cos s \, \cosh d_{\mathbb{H}^{p,q}}(x,y) & \geq \cos t \cosh d_{\mathbb{H}^{p,q}}(x,y)  - \sqrt{p - 1} \sin t \sinh d_{\mathbb{H}^{p,q}}(x,y) \\
			& \geq \left(\cos t - \sqrt{p - 1} \sin t\right) \cosh d_{\mathbb{H}^{p,q}}(x,y) .
		\end{align*}
		The hypothesis $t < r \leq \arctan(1/\sqrt{p-1})$ guarantees that the left multiplicative factor is positive. We deduce in particular that $f_u(y) \to + \infty$ as $y$ tends towards $\partial M \subset \partial \mathbb{H}^{p,q}$.

		Let us start by noticing that $f_u$ is the restriction on $M$ of a linear functional over $\mathbb{R}^{p,q+1}$. An elementary computation shows that any linear functional on $\mathbb{R}^{p,q+1}$ restricts to a function $\lambda : \mathbb{H}^{p,q} \to \mathbb{R}$ that satisfies $D^2_{X,Y} \lambda = \lambda \, \langle X, Y \rangle$ for any $X, Y$ tangent vector fields on $\mathbb{H}^{p,q}$, where $D^2_{X,Y} \lambda$ denotes the Hessian of $\lambda$ with respect to the Levi-Civita connection $D$ of $\mathbb{H}^{p,q}$. If we further restrict $\lambda$ to the maximal $p$-submanifold $M$, then its Hessian with respect to the Levi-Civita connection $\nabla$ of $M$ verifies
		\begin{equation}\label{eq:hessian}
			\nabla_{U,V}^2 \lambda = \lambda \, \langle U, V \rangle + \dd{\lambda}(D_U V - \nabla_U V) = \lambda \, \langle U, V \rangle + \dd{\lambda}(\mathbb{I}(U,V)) ,
		\end{equation}
		for any $U,V$ tangent vector fields to $M$.
		
		Assume now that $y \in M$ is a critical point for $f_y$ that satisfies $f_u(u) > \cos r$. In this case, both vectors $u + \langle u , y \rangle y = u - f_u(y) \, y$ and $\mathbb{I}_y(v,w)$ lie inside $N_y M$ (which is negative definite) for any $v,w \in T_y M$. From the fact that $u - f_u(y) \, y$ is timelike, we deduce that $f_u(y)^2 \leq 1$. Moreover, by applying the Cauchy-Schwarz inequality and relation \eqref{eq:hessian}, we deduce:
		\begin{align*}
			\nabla^2_{v,v} f_u|_y & \geq f_u(y) \langle v, v \rangle - \abs{\langle u - f_u(y) y, \mathbb{I}(v,v) \rangle} \\
			& \geq (f_u(y) - \sqrt{1 - f_u(y)^2} \norm{\mathbb{I}}_\infty) \langle v, v \rangle
		\end{align*}
		The hypothesis $t < r$ guarantees that the multiplicative factor in the last line is strictly positive, implying that $y$ is a point of strict local minimum. This concludes the proof of Property (2) and, hence, of Lemma~\ref{lem:embedding}.
\end{proof}

Now we have all the elements to conclude the proof of Proposition~\ref{pro:volume}. As observed above, the volume form $\nu^* \mathrm{vol}_{\mathbb{H}^{p,q}}$ can be expressed as
\[
	\nu^*(\mathrm{vol}_{\mathbb{H}^{p,q}})|_{(x,n;t)} =  \det(\cos t \, \mathrm{id} + \sin t \, B_n)^{1/2} \, \mathrm{vol}_{\mathbb{S}^q} \wedge \mathrm{vol}_{M} ,
\]
for any $(x,n;t) \in N^1 M \times (0,\infty)$. We select now 
\[
t_0 : = \arctan\left( \frac{1}{2 \max\{\sqrt{p - 1}, \norm{\mathbb{I}}_\infty\}} \right) .
\]
By Lemmas~\ref{lem:intorno nel dominio} and \ref{lem:embedding}, the restriction of the exponential map on $N^1 M \times (0,t_0)$ is an embedding inside $\Omega_\rho$. Our goal is now to find a bound from below of the volume of the open subset $\nu(N^1 M \times (0,t_0))/\rho(\Gamma)$ inside $\mathsf{N}_\rho = \Omega_\rho/\rho(\Gamma)$.

By definition, $\norm{B_n(\dot{x})} \leq \norm{\mathbb{I}}_\infty \, \norm{\dot{x}}$. It follows that all eigenvalues of $B_n$ are bounded by $\norm{\mathbb{I}}_\infty$ in absolute value. If $(\lambda_i(n))_{i = 1}^p$ denote the eigenvalues of $B_n$ ordered in decreasing order, then for any $t < t_0$ we have:
\begin{align*}
	\det(\cos t \, \mathrm{id} + \sin t \, B_n)^{1/2} & = \prod_{i = 1}^p (\cos t + \sin t \, \lambda_i(n))^{1/2} \\
	& = (\cos t)^{p/2} \prod_{i = 1}^p (1 + \tan t \, \lambda_i(n))^{1/2} \\
	& \geq (\cos t)^{p/2} (1 - \tfrac{1}{2})^{p/2} \\
	& = 2^{-p/2} \, (\cos t)^{p/2} ,
\end{align*}
since $\lambda_i(n) > - \norm{\mathbb{I}}_\infty$ and $\tan t \leq (2 \norm{\mathbb{I}}_\infty)^{-1}$. By integrating the volume form over $\nu(N^1 M \times (0,t_0))/\rho(\Gamma)$ and applying the bound from above, we deduce
\[
\mathrm{vol}(\mathsf{N}_\rho) \geq c(p) \, \mathrm{vol}(B_{\mathrm{S}^q}(t_0)) \, \mathrm{vol}(\mathsf{M}_\rho) ,
\]
where $B_{\mathbb{S}^q}(t_0)$ denotes the volume of the metric ball of radius $t_0 < \pi/2$ inside the unit sphere $\mathbb{S}^q$, and $c(p)$ is some positive constant. By Ishihara's bound on $\norm{\mathbb{I}}_\infty$ (see \eqref{eq:ricci bound}), the volume $B_{\mathbb{S}^q}(t_0)$ is bounded below by a constant that depends only on $p$ and $q$. This provides the uniform lower bound on ${\rm vol}_\rho$ as desired and finishes the proof of Proposition~\ref{pro:volume}.

\section{Entropy rigidity}
In this section we establish the inequality $\delta_\rho\le p - 1$ for convex cocompact representations $\rho:\Gamma\to{\rm SO}(p,q+1)$ of $p$-manifold groups $\Gamma$ and we characterize the cases where the equality is achieved. This proves Theorem~\ref{thm:main1} from the introduction.

\begin{proof}[Proof of Theorem~\ref{thm:main1}]
By the work of Seppi, Smith, and Toulisse \cite{SST23} there exists a unique spacelike $\rho$-invariant $p$-manifold $M\subset\mb{H}^{p,q}$ which is {\em maximal}, that is it has vanishing mean curvature.

We make the following observations:
\begin{itemize}
\item{Recall that, by Property \eqref{eq:ricci bound} the Ricci curvature of $M$ satisfies
\[
{\rm Ric}_M+(p-1)g_M\ge 0.
\]
}
\item{As the action $\rho(\Gamma)\curvearrowright M$ is cocompact, the Riemannian critical exponent $h(\mathsf{M}_\rho)$ (see Section~\ref{sec:vol,entr,top}) coincides with the volume entropy of $\mathsf{M}_\rho = M/\rho(\Gamma)$:
\[
h(\mathsf{M}_\rho) = \limsup_{R\to\infty}{\frac{\log{\rm vol}_M(B(o,R))}{R}} ,
\]
where $B_M(o,R)$ is the ball of radius $R$ around a fixed basepoint $o\in M$ for the intrinsic Riemannian metric of $M \subset \mathbb{H}^{p,q}$.}
\item{By Bishop-Gromov and the last identity, it follows that $h(\mathsf{M}_\rho)$ is bounded from above by $p-1$.}
\end{itemize}

By the definitions of $\delta_\rho$ and $h(\mathsf{M}_\rho)$ and Lemma~\ref{lem:distance}, we have $\delta_\rho\le h(\mathsf{M}_\rho)$ and, combining with the previous facts, we conclude that 
\[
\delta_\rho\le p-1.
\] 

If equality holds, then 
\[
\delta_\rho=h(\mathsf{M}_\rho)=p-1.
\]

By a rigidity result due to Ledrappier and Wang \cite{LW09}, since ${\rm Ric}_M\ge-(p-1)g_M$ the identity $h(\mathsf{M}_\rho)=p-1$ holds only when $\mathsf{M}_\rho$ is hyperbolic. The conclusion of the rigidity part of the statement then boils down to the following:

\begin{claim}{A}
	If $\mathsf{M}_\rho$ is hyperbolic, then it is totally geodesic.
\end{claim}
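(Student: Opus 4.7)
The plan is to unfold the claim at the level of the second fundamental form $\mathbb{I}$ of the $\rho$-invariant maximal spacelike $p$-submanifold $M \subset \mathbb{H}^{p,q}$: I will show $\mathbb{I} \equiv 0$, which is equivalent to $M$ being a totally geodesic copy of $\mathbb{H}^p$. The main tool is the contracted Gau\ss{} equation~\eqref{eq:traccia gauss} already derived in the paper, which reads, since $M$ is maximal (so $\mathrm{tr}_M \mathbb{I} = 0$):
\[
\mathrm{Ric}_M(U,V) \;=\; -(p-1)\,g_M(U,V) \;-\; \mathrm{tr}_M\bigl( \langle \mathbb{I}(\bullet,U), \mathbb{I}(\bullet,V) \rangle \bigr).
\]

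First, I will invoke the hypothesis that $\mathsf{M}_\rho$ is hyperbolic. Constant sectional curvature $-1$ implies in particular that $\mathrm{Ric}_M \equiv -(p-1)\,g_M$. Substituting into the identity above yields, for every $U \in T_xM$,
\[
\mathrm{tr}_M\bigl( \langle \mathbb{I}(\bullet, U), \mathbb{I}(\bullet, U) \rangle \bigr) \;=\; 0.
\]

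Next, I exploit the signature constraint. Since $M$ is a spacelike $p$-submanifold of $\mathbb{H}^{p,q}$, the fiber $(T_xM)^\perp \cap T_x\mathbb{H}^{p,q}$ carries a negative definite inner product, so for any orthonormal basis $(e_i)_i$ of $T_xM$ each term $\langle \mathbb{I}(e_i,U), \mathbb{I}(e_i,U)\rangle \leq 0$. The vanishing of their sum therefore forces $\mathbb{I}(e_i,U) = 0$ for every $i$ and every $U$, whence $\mathbb{I} \equiv 0$ on $M$. Consequently $M$ is totally geodesic in $\mathbb{H}^{p,q}$; being complete and spacelike of dimension $p$, it is a totally geodesic copy of $\mathbb{H}^p$, and $\rho(\Gamma)$ preserves it, so $\rho$ is conjugate into $\mathrm{S}(\mathrm{O}(p,1) \times \mathrm{O}(q))$, as announced.

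There is no serious obstacle here once the Gau\ss{} equation is paired with the maximality assumption and the negative-definiteness of the normal bundle; the entire argument is three lines of linear algebra, with all the analytic difficulty having been absorbed by the input from Ledrappier--Wang.
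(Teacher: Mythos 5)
Your proposal is correct and follows essentially the same route as the paper: both rely on the contracted Gau\ss{} equation~\eqref{eq:traccia gauss} together with the vanishing of the mean curvature and the negative-definiteness of the normal bundle to force $\mathbb{I}\equiv 0$. The only cosmetic difference is that the paper packages the last step using the ``timelike norm'' $\norm{\bullet}_t^2=-\langle\bullet,\bullet\rangle$, while you argue directly with the sign of $\langle\mathbb{I}(e_i,U),\mathbb{I}(e_i,U)\rangle$.
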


\begin{proof}[Proof of Claim A]
This is again a consequence of the Gauss equation. Denote by $\norm{\bullet}_t^2=-\langle\bullet,\bullet\rangle$ for timelike vectors. Recall that
\[
{\rm Ric}_{\mathsf{M}_\rho}(u,u)=-(p-1)\norm{u}^2+\sum_{j\le q}{\norm{\mb{I}(u,v_j)}_t^2}
\]
for every $x\in \mathsf{M}_\rho,u\in T_x \mathsf{M}_\rho$, and $v_1,\cdots,v_p$ orthonormal basis of $T_x \mathsf{M}_\rho$. If $\mathsf{M}_\rho$ is hyperbolic, then ${\rm Ric}_{\mathsf{M}_\rho}(u,u)=-(p-1)\norm{u}^2$ and therefore $\norm{\mb{I}(u,v_j)}_t^2=0$ for every $j$ and $u\in T_x M$. This implies that the second fundamental form of $\mathsf{M}_\rho$ vanishes $\mb{I}=0$ and hence that $\mathsf{M}_\rho$ is totally geodesic.
\end{proof}
This concludes the proof of Theorem~\ref{thm:main1}.
\end{proof}

\section{Finiteness and compactness}

Let $\Gamma$ be the fundamental group of a closed aspherical $p$-manifold $\mathsf{M}$ and $\rho:\Gamma\to{\rm SO}(p,q+1)$ a faithful convex cocompact representation. We prove that:
\begin{itemize}
\item{If ${\rm diam}_\rho\le D$ then $\mathsf{M}$ is homeomorphic to a manifold in a finite list only depending on $p$ and $D$.}
\item{The diameter function $\rho\in\mc{CC}(\Gamma)\to{\rm diam}_\rho\in(0,\infty)$ is proper.}
\end{itemize}
 
The main tool for the proof are the finiteness and compactness results in Riemannian geometry and Theorem~\ref{thm:distance comparison} that provides a bilipschitz relation between the length spectra of $\rho$ and of the associated maximal $p$-submanifold $\mathsf{M}_\rho$.

\subsection{Proof of Theorem~\ref{thm:main4}}
Consider $M\subset\mb{H}^{p,q}$, the unique $\rho$-invariant maximal $p$-submanifold provided by \cite{SST23}. Recall that, by the Gauss equation \eqref{eq:Gauss-Codazzi}, the intrinsic distance on $ M$ satisfies ${\rm Ric}_ M\ge-(p-1)g_ M$. Also recall that the intrinsic metric on $M$ is smaller than the induced pseudo-metric, that is $d_ M(\bullet,\bullet)\le d_{\mb{H}^{p,q}}(\bullet,\bullet)$. In particular, the intrinsic diameter of $\mathsf{M}_\rho$ is bounded from above by the pseudo-diameter ${\rm diam}_\rho$ which, by assumption, is bounded by ${\rm diam}_\rho\le D$.  

We are almost in the setting which enables us to use the following finiteness result of Anderson:

\begin{thm}[Anderson \cite{A}]
\label{thm:anderson}
Fix $p\in\mb{N}$, $k \in \mathbb{R}$, and $v,D\in\mb{R}^+$. Then the set
\[
\mathfrak{M}(p,k,v,d):=\left\{\pi_1(\mathsf{M}')\left|
\begin{array}{c}
\text{$(\mathsf{M}',g)$ closed Riemannian $p$-manifold with}\\
{\rm Ric}_g\ge k \, g,{\rm vol}(\mathsf{M}',g)\ge v,{\rm diam}(\mathsf{M}',g)\le D\\
\end{array}
\right.\right\}
\]
contains finitely many groups up to isomorphism.
\end{thm}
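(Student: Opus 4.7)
The plan is to establish a uniform bound on both the number of generators and the length of a complete set of relations for every group in $\mathfrak{M}(p,k,v,D)$; this immediately forces finitely many isomorphism classes, since the set of finite presentations with at most $N$ generators and relations of length at most $L$ is itself finite. The only geometric input from the Ricci lower bound is Bishop--Gromov volume comparison; the volume lower bound ensures a non-collapsing condition, and the diameter upper bound lets us localize all estimates around a single basepoint.

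For the generators, fix $\tilde x$ in the universal cover $\widetilde{\mathsf{M}'}$ and let $F$ be a Dirichlet fundamental domain at $\tilde x$. The diameter bound on $\mathsf{M}'$ implies $F \subset B(\tilde x, D)$ and $\mathrm{diam}(F) \leq 2D$, while the volume hypothesis gives $\mathrm{vol}(F) \geq v$. The standard face-pairing set
\[
G := \bigl\{ \gamma \in \pi_1(\mathsf{M}') \setminus \{e\} \,\bigm|\, \gamma F \cap F \neq \emptyset \bigr\}
\]
generates $\pi_1(\mathsf{M}')$ and consists of elements with $d(\tilde x, \gamma \tilde x) \leq 2D$. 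Since the translates $\{\gamma F\}_{\gamma \in G \cup \{e\}}$ are pairwise interior-disjoint and contained in $B(\tilde x, 3D)$, Bishop--Gromov yields
\[
(|G|+1) \cdot v \leq \mathrm{vol}\bigl(B(\tilde x, 3D)\bigr) \leq V(p,k,3D),
\]
where $V(p,k,r)$ denotes the volume of a ball of radius $r$ in the simply-connected model space of constant Ricci curvature $k$. Hence $|G| \leq N = N(p,k,v,D)$.

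For the relations, one needs a uniform $L = L(p,k,v,D)$ such that every relation in $G$ lies in the normal closure of words of length at most $L$ --- equivalently, such that $\pi_1(\mathsf{M}')$ is fully determined by the ball of radius $L$ in its Cayley graph with respect to $G$. The cleanest route is via Gromov's precompactness theorem: the family $\mathfrak{M}(p,k,v,D)$ is precompact in the pointed Gromov--Hausdorff topology, and the volume lower bound prevents collapsing. One argues by contradiction: if the relation length were unbounded, one could pass to a Gromov--Hausdorff convergent subsequence along which the presentations fail to stabilize, contradicting the non-collapsed stability of fundamental groups along Ricci limit spaces (due to Sormani--Wei, based on the Cheeger--Colding regularity theory).

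The main obstacle is precisely this relation bound. Without a sectional curvature bound there is no Cheeger-type injectivity radius estimate, so there is no elementary way to triangulate $\mathsf{M}'$ with simplices of bounded geometry and read off relations directly. The volume lower bound is essential here: collapsing sequences can produce fundamental groups of unbounded complexity, as seen already in Gromov's almost-flat manifolds, so non-collapsing is what makes the fundamental-group stability along limits a usable tool. Once both $N$ and $L$ are uniform, only finitely many groups admit such a presentation, completing the proof.
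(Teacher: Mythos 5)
First, a remark on scope: the paper does not prove this theorem; it cites it directly from Anderson's article \cite{A} and uses it as a black box. So there is no in-paper argument to compare against, and I will assess your proposal on its own merits.

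Your first step, bounding the number of generators, is correct and standard. The Dirichlet domain $F$ at $\tilde x \in \widetilde{\mathsf{M}'}$ is contained in $\overline{B}(\tilde x, D)$ since $\mathrm{diam}(\mathsf{M}') \le D$; the face-pairing set $G$ consists of elements with displacement at most $2D$; the translates $\{\gamma F\}_{\gamma \in G \cup \{e\}}$ have pairwise disjoint interiors, each of volume $\ge v$, and lie in $B(\tilde x, 3D)$; and Bishop--Gromov (applicable in the universal cover because the Ricci lower bound lifts) gives $(|G|+1)\,v \le V(p,k,3D)$. All fine.

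The gap is in the second step. You identify the relation-length bound as the crux, but the remedy you propose --- Gromov precompactness plus the Sormani--Wei stability theory for fundamental groups of non-collapsed Ricci limit spaces --- is not a proof as written. Knowing that some notion of fundamental group is preserved along a Gromov--Hausdorff convergent subsequence does not, by itself, yield a uniform bound on relator length for each individual manifold in the sequence; the claimed contradiction (``presentations fail to stabilize'') is never actually derived, and the Sormani--Wei stability statements are delicate enough (they concern the revised fundamental group and require careful bookkeeping of deck groups on limit spaces) that this sketch would need to become a substantial argument. It is also anachronistic: Anderson's theorem is from 1990 and cannot rest on Cheeger--Colding regularity theory.

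The irony is that the difficulty you isolate is, in fact, free, and requires no limit theory at all. Because the universal cover $\widetilde{\mathsf{M}'}$ is simply connected, one can invoke the classical lemma of Macbeath on presentations arising from group actions (see e.g.\ Bridson--Haefliger, Theorem~I.8.10, or Serre, \emph{Trees}): if a group $\Gamma$ acts on a simply connected space $X$ and $U \subset X$ is an open connected subset with $\Gamma \cdot U = X$, then $\Gamma$ admits a presentation with generating set $S = \{\gamma \in \Gamma : \gamma U \cap U \neq \emptyset\}$ and relators of word-length at most $3$, namely the words $\hat\gamma_1 \hat\gamma_2 \widehat{\gamma_1\gamma_2}^{\,-1}$ with $\gamma_1, \gamma_2, \gamma_1\gamma_2 \in S$. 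Taking $U = B(\tilde x, D + \varepsilon)$, the set $S$ has displacement at most $2D + 2\varepsilon$ and is bounded in cardinality by the same Bishop--Gromov packing estimate as in your first step. The presentation therefore has a uniformly bounded number of generators and at most $|S|^2$ relators, each of length $3$. Hence there are only finitely many possible presentations and the finiteness of isomorphism types follows immediately. In short: simple connectivity of the universal cover is exactly what makes the relation length uniformly bounded, and this elementary observation --- not the modern Ricci-limit machinery --- is what underlies Anderson's argument.
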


In order to apply Theorem~\ref{thm:anderson} we need to check that the volume of $\mathsf{M}_\rho$ cannot be too small.

\begin{claim}{A}
	There exists $\ep_p>0$ only depending on $p$ such that ${\rm vol}(\mathsf{M}_\rho)\ge\ep_p$.
\end{claim}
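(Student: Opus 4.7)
The plan is to combine the generalized Margulis lemma for manifolds with Ricci curvature bounded below (Kapovitch--Wilking) with the fact that $\Gamma$ is torsion-free word-hyperbolic of cohomological dimension $p \geq 2$, hence not virtually cyclic. First I will verify the geometric hypotheses: by Ishihara's inequality \eqref{eq:ricci bound}, the closed Riemannian $p$-manifold $\mathsf{M}_\rho$ satisfies $\mathrm{Ric}_{\mathsf{M}_\rho} \geq -(p-1)\, g_{\mathsf{M}_\rho}$; and by Theorem~\ref{thm:distance comparison} together with the hypothesis $\mathrm{diam}_\rho \leq D$, its intrinsic diameter is bounded by $D$. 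Algebraically, $\Gamma = \pi_1(\mathsf{M})$ is torsion-free (as the fundamental group of a closed aspherical manifold), word-hyperbolic (by $\mathbb{H}^{p,q}$-convex cocompactness of $\rho$), and has cohomological dimension $p \geq 2$; in particular $\Gamma$ contains no virtually cyclic subgroup of finite index.

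Next I will invoke the Kapovitch--Wilking generalized Margulis lemma: there exist constants $\eta = \eta(p) > 0$ and $C = C(p) \in \mathbb{N}$, depending only on $p$, such that for any complete Riemannian $p$-manifold $N$ with $\mathrm{Ric} \geq -(p-1) g_N$ and any $y \in N$, the image of $\pi_1(B_N(y, \eta), y) \to \pi_1(N, y)$ admits a nilpotent subgroup of index at most $C$. Applied to $\mathsf{M}_\rho$, using that nilpotent subgroups of a torsion-free word-hyperbolic group are cyclic (being solvable, they are virtually cyclic by Gromov's theorem, and in the torsion-free case virtually cyclic means cyclic), this shows that for every $x \in \mathsf{M}_\rho$ the image $H_x$ of $\pi_1(B(x, \eta), x) \to \Gamma$ is virtually cyclic, and hence cyclic.

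I will then locate a \emph{thick} point $x$, where $H_x$ is trivial. Since $\mathsf{M}_\rho$ has intrinsic diameter at most $D$ and Ricci bounded below, Bishop--Gromov yields a uniform bound $N_0 = N_0(p, D)$ on the number of $\eta$-balls needed to cover $\mathsf{M}_\rho$. If $H_x$ were non-trivial at every point, a Van Kampen-type argument exploiting the cyclic structure of each $H_x$ together with the uniform bound $N_0$ would force $\Gamma$ itself to be virtually cyclic, contradicting its cohomological dimension. At the thick point, the ball $B_{\mathsf{M}_\rho}(x, \eta)$ lifts isometrically to an embedded ball in the universal cover, so $\mathrm{vol}(\mathsf{M}_\rho) \geq \mathrm{vol}(B(x, \eta))$.

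The main obstacle will be to bound this embedded-ball volume from below by a constant depending only on $p$: the lack of an upper Ricci bound independent of $q$ (the bound from Ishihara involves $q$ via $\|\mathbb{I}\|_\infty$) prevents a direct Bishop--Gromov lower bound on ball volumes. The plan for circumventing this is either to invoke the Cheeger--Colding non-collapsing theorem in the non-virtually-nilpotent setting, or, more concretely, to use the extrinsic realization of $M$ as the graph of a $1$-Lipschitz function over the hyperbolic disk $\mathbb{D}^p$ in Fermi coordinates (Section~2.2), combined with Theorem~\ref{thm:distance comparison}, to compare $B_{\mathsf{M}_\rho}(x, \eta)$ with a ball in $\mathbb{H}^p$ of definite radius depending only on $p$, and thereby extract the uniform lower bound $\epsilon_p$.
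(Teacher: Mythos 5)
Your approach is genuinely different from the paper's, and as written it does not close. For comparison, the paper disposes of this claim in a few lines: since $\rho$ is convex cocompact, $\Gamma$ is word-hyperbolic by \cite{DGK}, hence $\|\mathsf{M}_\rho\| > 0$ by Mineyev; then Gromov's Isolation Theorem (in \cite{Gr}) says that a closed Riemannian $p$-manifold with $\mathrm{Ric} \geq -(p-1)\,g$ and volume below a dimensional constant $\epsilon_p$ has vanishing simplicial volume, a contradiction. Note that this only uses the Ricci inequality \eqref{eq:ricci bound} and requires no diameter hypothesis, whereas your route builds the diameter bound into the argument unnecessarily.

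The Margulis-lemma route has a genuine gap at the final step that you flag yourself but whose proposed fixes do not work. The issue is a $q$-uniform lower bound on $\mathrm{vol}(B_{\mathsf{M}_\rho}(x,\eta))$ at a thick point. Ishihara's bound gives $|\mathrm{sec}_M| \leq 1 + p^2 q^2$, which degenerates as $q \to \infty$; with no $q$-uniform control on the conjugate radius there is no isoembolic lower bound for ball volumes. Your Fermi-coordinate fix goes in the wrong direction: by \eqref{eq:differential} the projection $\pi \colon M \to \mathbb{D}^p$ is length-\emph{expanding}, hence volume-expanding, so it gives the \emph{upper} bound $\mathrm{vol}_M(U) \leq \mathrm{vol}_{\mathbb{H}^p}(\pi(U))$, not a lower bound on intrinsic ball volumes. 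The appeal to ``Cheeger--Colding non-collapsing in the non-virtually-nilpotent setting'' is not a precise statement; the theorems in that circle (e.g.\ Kapovitch--Wilking) yield diameter lower bounds under non-virtual-nilpotency, which do not convert into volume lower bounds without an independent non-collapsing input. There is also a secondary gap: the ``Van Kampen-type argument'' locating a thick point is only sketched, and having cyclic local images $H_x$ at every $x$ does not, by itself, force $\Gamma$ to be virtually cyclic. A correct completion would likely need to reformulate this part via the global Kapovitch--Wilking theorem (small diameter implies virtually nilpotent $\pi_1$), and would still then face the volume non-collapsing gap. The simplicial-volume route avoids all of this.
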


\begin{proof}[Proof of Claim A]
We proceed as follows: Since $\rho$ is convex cocompact, we have that $\Gamma$ is Gromov hyperbolic (see \cite{DGK}). By work of Mineyev \cite{M} and the fact that $\Gamma=\pi_1(\mathsf{M}_\rho)$, this implies that the simplicial volume of $\mathsf{M}_\rho$ is positive (and independent of $\rho$). The conclusion of the claim follows from the Isolation Theorem of Gromov (see \cite{Gr}), which states that there exists a dimensional constant $\ep_p>0$ such that if $(\mathsf{M}',g)$ is a closed Riemannian $p$-manifold with ${\rm Ric}_g\ge-(p-1)\,g$ and ${\rm vol}(\mathsf{M}',g)<\ep_p$ then the simplicial volume of $\mathsf{M}'$ vanishes. As in our setting $\norm{\mathsf{M}_\rho}>0$ and ${\rm Ric}_{\mathsf{M}_\rho}\ge-(p-1)\,g_{\mathsf{M}_\rho}$, we must have ${\rm vol}(\mathsf{M}_\rho)\ge\ep_p$.
\end{proof}

So far, we proved that, up to group isomorphism, there are only finitely many fundamental groups of closed aspherical $p$-manifolds that admit faithful $\mb{H}^{p,q}$-convex cocompact representations. By the solution of the Borel conjecture for hyperbolic groups by Bartels and L\"{u}ck \cite{BL}, one can upgrade finiteness up to isomorphism of fundamental groups to finiteness up to homeomorphism as claimed in Theorem~\ref{thm:main4}. 

\begin{rmk}
	Notice that, in the argument above, it is enough to know that $\mathrm{diam}(\mathsf{M}_\rho)$ is bounded from above.
\end{rmk}

\subsection{The proof of Theorem~\ref{thm:main5}}
Let $\rho_n:\Gamma\to{\rm SO}(p,q+1)$ be a sequence of representations such that ${\rm diam}(\rho_n)\le D$. 

Let $ M_n\subset\mb{H}^{p,q}$ be the unique $\rho_n(\Gamma)$-invariant maximal $p$-manifold provided by \cite{SST23}, and set $\mathsf{M}_{n} : = M_n / \rho_n(\Gamma)$. By the Gauss equation we have
\[
\abs{{\rm sec}_{ M_n}}\le\abs{{\rm sec}_{\mb{H}^{p,q}}}+\norm{\mb{I}_{ M_n}}^2\le 1+p^2q^2.
\]
where the last inequality follows from Ishihara's bound (see \cite{I}, or \eqref{eq:ricci bound}). 

Recall from the proof of the first part that ${\rm vol}( \mathsf{M}_{n})\ge\ep_p$ and ${\rm diam}( \mathsf{M}_{n})\le D$. Thus, we are in the setup of the Cheeger-Gromov Compactness Theorem (see \cite{P}). In particular, there exists $n_0>0$ such that, for every $n\ge n_0$, we can find a $2$-Lipschitz diffeomorphism $f_n: \mathsf{M}_{n_0}\to \mathsf{M}_{n}$. We consider the representations $ \rho_n':=\rho_n\circ(f_n)_*$, where $(f_n)_*:\Gamma\to\Gamma$ is the map induced by $f_n$ at the level of the fundamental groups. 

We show that $\rho_n'$ admits a convergent subsequence. In order to do so it is enough to show the following: Let $S$ be a finite set of generators for $\Gamma$.

\begin{claim}{B}
	Up to subsequences $\rho_n'(\gamma)$ converges for every $\gamma\in S$.
\end{claim}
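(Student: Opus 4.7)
The strategy is to convert the hypothesis of bounded pseudo-Riemannian diameter into a uniform bound on the intrinsic Riemannian displacement of the generators on $M_n$, and then to exploit Proposition~\ref{prop:compactness maximal} after normalizing basepoint and tangent plane via ambient isometries.

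First, I would lift $f_n$ to a $\rho_n'$-equivariant $2$-Lipschitz diffeomorphism $\widetilde{f}_n : M_{n_0} \to M_n$ of the universal covers. Fixing $o_0 \in M_{n_0}$ and setting $o_n := \widetilde{f}_n(o_0)$, the equivariance $\widetilde{f}_n(\rho_{n_0}(\gamma) x) = \rho_n'(\gamma) \widetilde{f}_n(x)$ combined with the $2$-Lipschitz estimate gives
\[
d_{M_n}\bigl(o_n, \rho_n'(\gamma)\,o_n\bigr) \;\le\; 2\,d_{M_{n_0}}\bigl(o_0,\rho_{n_0}(\gamma)\,o_0\bigr) ,
\]
and, since $S$ is finite and the right-hand side is independent of $n$, there exists $R > 0$ with $d_{M_n}(o_n, \rho_n'(\gamma) o_n) \le R$ for every $n$ and every $\gamma \in S$.

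Next, since we work in $\mathcal{CC}(\Gamma)$ modulo conjugation, I would fix once and for all a basepoint $o_* \in \widehat{\mathbb{H}}^{p,q}$ and a spacelike $p$-plane $V_0 \subset T_{o_*}\widehat{\mathbb{H}}^{p,q}$, and pick $g_n \in \mathrm{SO}(p,q+1)$ with $g_n(o_n) = o_*$ and $(\mathrm{d}g_n)_{o_n}(T_{o_n} M_n) = V_0$. After replacing $\rho_n'$ by $g_n \rho_n' g_n^{-1}$ and $M_n$ by $g_n(M_n)$, all pointed maximal submanifolds $(o_*, g_n M_n)$ lie in the same fiber of the map $\tau$ of Theorem~\ref{thm:compactness maximal}. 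Properness of $\tau$ then yields a subsequence along which $(o_*, g_n M_n)$ converges in $\mathcal{M}\!\mathit{ax}_{p,q}$ to some $(o_*, M_\infty)$; combined with Ishihara's bound $\norm{\mathbb{I}}_\infty \le pq$ from \eqref{eq:ricci bound} and elliptic regularity for the maximality PDE, this Hausdorff convergence upgrades to smooth convergence on compact subsets.

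To close the argument, I would promote convergence of submanifolds to convergence of group elements. The conjugated bound places $p_n(\gamma) := g_n \rho_n'(\gamma) g_n^{-1} o_*$ inside an intrinsic $R$-ball of $g_n M_n$, whose limit is relatively compact in $\widehat{\mathbb{H}}^{p,q}$; smoothness of the convergence additionally forces the tangent plane $T_{p_n(\gamma)}(g_n M_n)$ to subconverge, and since $\rho_n'(\gamma)$ preserves $M_n$ this plane coincides with $(\mathrm{d}(g_n \rho_n'(\gamma) g_n^{-1}))_{o_*}(V_0)$. The stabilizer of $(o_*, V_0)$ in $\mathrm{SO}(p,q+1)$ is the compact group $\mathrm{S}(\mathrm{O}(p) \times \mathrm{O}(q))$, so a further subsequence yields convergence of $g_n \rho_n'(\gamma) g_n^{-1}$ in $\mathrm{SO}(p,q+1)$, and a diagonal extraction over the finite set $S$ concludes. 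The main obstacle is exactly what was flagged in the introduction: neither $d_{\mathbb{H}^{p,q}}$-balls nor point stabilizers of $\mathrm{SO}(p,q+1) \curvearrowright \mathbb{H}^{p,q}$ are compact, so one cannot invoke Arzelà–Ascoli directly on the ambient pseudo-Riemannian data; Proposition~\ref{prop:compactness maximal} is precisely the tool that lets us bypass this by transplanting compactness to the intrinsic Riemannian geometry of the invariant maximal submanifolds.
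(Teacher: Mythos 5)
Your argument takes a genuinely different route from the paper's. The paper works in the Riemannian symmetric space $\mathbb{X}_{p,q+1}$ of $\mathrm{SO}(p,q+1)$: it bounds the joint minimal displacement of $\rho_n'(S)$ via the Breuillard--Fujiwara inequality, which reduces matters to controlling $\log\lambda_{\max}(\rho_n'(\gamma))$ for $\gamma$ in a slightly larger ball $S^k$, and that control is obtained by comparing the eigenvalue to the intrinsic translation length on $M_n$ through Theorem~\ref{thm:distance comparison}; finally, boundedness of the joint minimal displacement plus properness of the $\mathrm{SO}(p,q+1)$-action on $\mathbb{X}_{p,q+1}$ gives subconvergence. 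Your proposal instead stays entirely inside $\widehat{\mathbb{H}}^{p,q}$: after the same first step (the equivariant $2$-Lipschitz lift of $f_n$ giving a uniform bound $d_{M_n}(o_n,\rho_n'(\gamma)o_n)\le R$ for $\gamma\in S$), you normalize by $g_n\in\mathrm{SO}(p,q+1)$ so that $\tau(o_*,g_nM_n)=(o_*,V_0)$ is fixed, apply properness of $\tau$ from Theorem~\ref{thm:compactness maximal}, and then recover the group elements from their images under the proper orbit map $A\mapsto (A o_*,(\dd A)_{o_*}V_0)$, whose fiber over $(o_*,V_0)$ is the compact group $\mathrm{S}(\mathrm{O}(p)\times\mathrm{O}(q))$. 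This is a clean alternative that avoids the symmetric space, the eigenvalue estimate for proximal elements, and the Breuillard--Fujiwara machinery, at the cost of asking more from the SST compactness theorem.

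The one step you should not wave at is the assertion that $p_n(\gamma)=g_n\rho_n'(\gamma)g_n^{-1}o_*$, which satisfies $d_{g_nM_n}(o_*,p_n(\gamma))\le R$, subconverges in $\widehat{\mathbb{H}}^{p,q}$. Hausdorff convergence $(o_*,g_nM_n)\to(o_*,M_\infty)$ in $\mathcal{M}\!\mathit{ax}_{p,q}$ does \emph{not} by itself imply that the intrinsic $R$-balls $B_{g_nM_n}(o_*,R)$ are contained in a fixed compact subset of $\widehat{\mathbb{H}}^{p,q}$: indeed the pseudo-distance bound from Theorem~\ref{thm:distance comparison} only gives $-\langle o_*,p_n(\gamma)\rangle\le\cosh(\sqrt{p}R)$, and the sublevel sets of $-\langle o_*,\bullet\rangle$ are non-compact (the very obstruction highlighted in the introduction), while the inequality \eqref{eq:differential} bounds the intrinsic length from \emph{below} by the hyperbolic length of its projection to $\mathbb{D}^p$, which is the wrong direction. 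What actually saves the argument is the upgrade to $C^\infty_{\mathrm{loc}}$-convergence of the graphing functions $u_n\to u_\infty$ on $\mathbb{D}^p$ (you correctly invoke Ishihara's bound and elliptic regularity for this), combined with completeness of the limit metric $h_\infty$: a standard ODE-stability/open-closed argument then shows that $h_n$-geodesics from $0$ of length $\le R$ remain in a fixed compact subset of $\mathbb{D}^p$ for $n$ large, whence $p_n(\gamma)$ and $T_{p_n(\gamma)}(g_nM_n)$ subconverge. This is exactly the content that SST's properness theorem encapsulates via the uniform gradient estimate for maximal graphs, so you are appealing to the right circle of ideas, but in the present form the assertion ``whose limit is relatively compact'' elides the actual estimate needed. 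Once that is spelled out, your proof is correct.
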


\begin{proof}[Proof of Claim B]
We consider the action of $\rho_n'(\Gamma)$ on the symmetric space $\mathbb{X}_{p,q+1}$ of ${\rm SO}(p,q+1)$. 

By \cite{DGK}, for each $\gamma\in\Gamma$ and $n\ge n_0$, the element $\rho_n'(\gamma)$ has simple eigenvalue of largest modulus $\lambda_{{\rm max}}(\rho_n'(\gamma))$ greater than $1$, being $\rho_n'$ convex cocompact. Moreover, the minimal displacement on $\mathbb{X}_{p,q+1}$ of an isometry $A\in{\rm SO}(p,q+1)$ with simple largest eigenvalue $\lambda_{{\rm max}}( A)>1$ is
\[
\delta(A):=\min\{d_{\mathbb{X}_{p,q+1}}(x, A x)\mid x\in\mathbb{X}_{p,q+1}\} ,
\]
which is bounded by
\[
\delta(A)\le(p+q+1)\log\lambda_{{\rm max}}(A).
\]

For a finite subset $S\subset\Gamma$, we can consider the {\em joint minimal displacement} 
\[
\delta(\rho_n'(S))=\min_{x\in\mathbb{X}_{p,q+1}}\max_{\gamma\in S}\{d_{\mathbb{X}_{p,q+1}}(x,\rho_n'(\gamma)x)\}.
\]

Note that, if $\delta(\rho_n'(\gamma))$ is uniformly bounded independently of $n$, then the fact that ${\rm SO}(p,q+1)$ acts properly on $\mathbb{X}_{p,q+1}$ implies that $\rho_n'(\gamma)$ admits a convergent subsequence. Therefore, it is enough to show that $\delta(\rho_n'(S))$ is uniformly bounded independently of $n$.

By work of Breuillard and Fujiwara \cite{BF}*{Proposition~1.6}, we have
\begin{align*}
\delta(\rho_n'(S)) &\le L \, \max_{\gamma\in S^k}\{\delta(\rho_n'(\gamma))\}+ L \\
 &\le L \, \max_{\gamma\in S^k}\{(p+q+1)\log\lambda_{{\rm max}}(\rho_n'(\gamma))\}+ L ,
\end{align*}
for some dimensional constants $L=L(p,q)>0$ and $k=k(p,q)\in\mb{N}$ where $S^k$ is the set of elements in $\Gamma$ that can be written as a product of at most $k$ elements in $S$. As a consequence, given $k>0$, we need to find a uniform upper bound for $\log(\lambda_{{\rm max}}(\rho_n'(\gamma)))$, as $\gamma$ varies in $S^k$.

Consider one such element $\gamma\in S^k-\{1\}$. Let $\ell_n\subset\Omega_{\rho_n'}$ be the unique $\rho_n'(\gamma)$-invariant spacelike geodesic and pick a point on it $x_n\in\ell_n$. Let $u_n\in T_{x_n}\mb{H}^{p,q}$ be a timelike vector orthogonal to $\ell_n$ pointing towards $M_n$, that is, such that $y_n:=\cos(t_n)x_n+\sin(t_n)u_n\in M_n$ for some $t_n>0$. 

By Theorem~\ref{thm:distance comparison}, we have
\begin{align*}
\cosh(\sqrt{p}\cdot d_{ M_n}(y_n, \rho_n'(\gamma)^jy_n)) &\ge\cosh(d_{\mb{H}^{p,q}}(y_n,\rho_n'(\gamma)^jy_n))\\
 &=-\langle y_n,\rho_n'(\gamma)^jy_n\rangle.
\end{align*}
for any $j \in \mathbb{N}$. In turn,
\begin{align*}
-\langle y_n,\rho_n'(\gamma)^jy_n\rangle &=-\cos(t_n)^2\langle x_n,\rho_n'(\gamma)^jx_n\rangle-\sin(t_n)^2\langle u_n,\rho_n'(\gamma)^ju_n\rangle\\
 &=\cos(t_n)^2\cosh(j\log\lambda_{{\rm max}}(\rho_n'(\gamma)))-\sin(t_n)^2\langle u_n,\rho_n'(\gamma)^ju_n\rangle.
\end{align*}

As $\langle u_n,\rho_n'(\gamma)^ju_n\rangle$ is getting infinitesimal compared to $\cosh(j\log\lambda_{{\rm max}}(\rho_n'(\gamma)))$ (being $\lambda_{{\rm max}}(\rho_n'(\gamma))$ the largest eigenvalue of $\rho_n'(\gamma)$), putting together the previous inequalities we get
\[
\log\lambda_{{\rm max}}(\rho_n'(\gamma))\le\sqrt{p}\cdot\liminf_{j\to\infty}{\frac{d_{ M_n}(y_n,\rho_n'(\gamma)^jy_n)}{j}}.
\]

Observe that the right hand side coincides with the length of the shortest geodesic in the free homotopy class $[f_n(\gamma)]$ of $\mathsf{M}_{n} = M_n/\rho_n(\Gamma)$. As the map $f_n: \mathsf{M}_{n_0} \to \mathsf{M}_{n}$ is $2$-Lipschitz, we conclude that 
\[
\ell_{\mathsf{M}_n}(f_n(\gamma))\le 2\, \ell_{\mathsf{M}_{n_0}}(\gamma)
\]
which shows that $\lambda_{{\rm max}}(\rho_n'(\gamma))$ is uniformly bounded independently of $n$.
\end{proof}

\begin{rmk}
	Notice that, in the argument above, we only used that $\mathrm{diam}(\mathsf{M}_{\rho_n})$ is bounded uniformly in $n$, rather than $\mathrm{diam}_\rho \geq \mathrm{diam}(\mathsf{M}_{\rho_n})$. In particular, the analog statement of Theorem~\ref{thm:main5}, obtained by replacing the role of $\mathrm{diam}_\rho$ with $\mathrm{diam}(\mathsf{M}_\rho)$, also holds true.
\end{rmk}

\subsection{The proof of Theorem~\ref{thm:main6}}

Let $\Sigma$ be a closed orientable surface of genus at least $2$, let $\pi = \pi_1(\Sigma)$ denote its fundamental group, and select $\ep,V>0$ and $q\geq 1$. 

By Theorem~\ref{thm:main5}, the mapping class group ${\rm Out}(\pi)$ acts cocompactly on 
\[
\left\{\rho\in{\rm Hom}(\pi,{\rm SO}(2,q+1))\left| \, \,
	\rho\,\,{\rm maximal}, {\rm diam}_{\rho}\le D
\right.\right\}.
\]

Therefore it is enough to show that ${\rm vol}_{\rho}\le V$ and ${\rm sys}_{\rho}\ge\ep$ imply that ${\rm diam}_{\rho}\le D$ for some $D = D(V,\ep) > 0$. 

Since the unique $\rho(\pi)$-invariant maximal surface $M$ of $\mathbb{H}^{2,q}$ is non-positively curved (see Labourie and Toulisse \cite{LT22}), the following properties hold:
\begin{enumerate}
	\item The injectivity radius of the maximal surface $\mathsf{M}_\rho = M / \rho(\pi)$ coincides with half of its systole. The latter, by \cite{LT22}*{Proposition~5.8} (or Theorem~\ref{thm:main3}), is uniformly comparable with the systole $\mathrm{sys}_\rho$.
	\item The volume of any non-positively curved Riemannian manifold $\mathsf{M}'$ is bounded from below by $D \, \mathrm{vol}(B(r_{\mathsf{M}'}))$, where $B(r_{\mathsf{M}'})$ denotes the Euclidean ball of radius $r_{\mathsf{M}'} > 0$ in $\mathbb{R}^{\dim \mathsf{M}'}$ and $r_{\mathsf{M}'}$ is some explicit function of $\mathrm{sys}_{\mathsf{M}'}$.
\end{enumerate}

By the properties above and Proposition~\ref{pro:volume}, the diameter of $\mathsf{M}_\rho$ satisfies
\[
\mathrm{diam}(\mathsf{M}_\rho) \leq \frac{\mathrm{vol}(\mathsf{M}_\rho)}{\mathrm{vol}(B(r_{\mathsf{M}_\rho}))} \leq \frac{\mathrm{vol}_{\rho}}{\mu \, \mathrm{vol}(B(r_{\mathsf{M}_\rho}))} ,
\]
which implies the desired statement, as $r_{\mathsf{M}_\rho}$ is bounded from below by some (explicit) function of $\ep > 0$ by item (1).

\end{document}